

\documentclass[12pt,a4paper,psamsfonts]{amsart}
\usepackage{amssymb,amscd,amsxtra,calc}
\usepackage{cmmib57}

\setlength{\topmargin}{0cm}
\setlength{\oddsidemargin}{0cm}
\setlength{\evensidemargin}{0cm}
\setlength{\marginparwidth}{0cm}
\setlength{\marginparsep}{0cm}

\setlength{\textheight}{\paperheight - 2in -35pt}
\setlength{\textwidth}{\paperwidth - 2.5in}
\setlength{\headheight}{12.5pt}
\setlength{\headsep}{25pt}
\setlength{\footskip}{30pt}

\setlength{\mathsurround}{2.5pt}

\pagestyle{headings}

\theoremstyle{plain}
    \newtheorem{thm}{Theorem}[section]

    \newtheorem{corollary}[thm]{Corollary}
    \newtheorem{lemma}[thm]{Lemma}
    \newtheorem{proposition}[thm]{Proposition}
    \newtheorem{question}[thm]{Question}
    \newtheorem{theorem}[thm]{Theorem}
    \newtheorem{problem}[thm]{Problem}

\theoremstyle{definition}
    \newtheorem{remark}[thm]{Remark}
    \newtheorem{setup}[thm]{}

\newcommand{\bG}{{\mathbb{G}}}
\newcommand{\bP}{{\mathbb{P}}}

\newcommand{\Gbar}{{\overline{G}}}
\newcommand{\Hbar}{{\overline{H}}}
\newcommand{\Mbar}{{\overline{M}}}
\newcommand{\Gtilde}{{\tilde{G}}}
\newcommand{\Htilde}{{\tilde{H}}}
\newcommand{\Mtilde}{{\tilde{M}}}
\newcommand{\Aut}{{\operatorname{Aut}}}
\newcommand{\Dom}{{\operatorname{Dom}}}
\newcommand{\id}{{\operatorname{id}}}
\newcommand{\GL}{{\operatorname{GL}}}
\newcommand{\Image}{{\operatorname{Im}}}
\newcommand{\Lie}{{\operatorname{Lie}}}
\newcommand{\SL}{{\operatorname{SL}}}
\newcommand{\Sp}{{\operatorname{Sp}}}
\newcommand{\SO}{{\operatorname{SO}}}
\newcommand{\PGL}{{\operatorname{PGL}}}
\newcommand{\Gm}{{{\bG}_{\textup{m}}}}
\newcommand{\Ga}{{{\bG}_{\textup{a}}}}

\newcommand{\bksl}{{\backslash}}
\newcommand{\characteristic}{{\operatorname{char}}}


\begin{document}

\title
{Rationality of homogeneous varieties}

\author{CheeWhye Chin}
\address
{Department of Mathematics \endgraf
 National University of Singapore \endgraf
 10 Lower Kent Ridge Road, Singapore 119076, Singapore}
\email{cheewhye@nus.edu.sg}

\author{De-Qi Zhang}
\address
{Department of Mathematics \endgraf
 National University of Singapore \endgraf
 10 Lower Kent Ridge Road, Singapore 119076, Singapore}
\email{matzdq@nus.edu.sg}

\begin{abstract}
Let $G$ be a connected linear algebraic group
over an algebraically closed field $k$,
and let $H$ be
a connected closed subgroup of $G$.
We prove that
the homogeneous variety $G/H$
is a rational variety over $k$
whenever $H$ is solvable,
or when $\dim(G/H) \leqslant 10$
and $\characteristic(k)=0$.
When $H$ is of maximal rank in $G$,
we also prove that
$G/H$ is rational
if the maximal semisimple quotient of $G$ is
isogenous to
a product of almost-simple groups of
type~$A$,
type~$C$ (when $\characteristic(k) \neq 2$),
or
type~$B_3$ or $G_2$ (when $\characteristic(k) = 0$).
\end{abstract}

\subjclass[2000]{
14E08  
14M17, 
14M20  
}
\keywords{homogeneous variety, rationality}

\thanks{}

\maketitle

\tableofcontents


\section{Introduction}

Let $k$ be
an algebraically closed field
of arbitrary characteristic $p \ge 0$.
%
Throughout this paper,
we work only with the Zariski topology
on varieties or algebraic groups over $k$.
By a result that goes back to Chevalley
when $\characteristic(k)=0$
(cf.~\cite[\S2, Cor.~2 to Th.~1]{Ch})
and to Rosenlicht
for arbitrary $\characteristic(k)$
(cf.~\cite[end of \S3]{Ro57}),
a connected linear algebraic group $G$ over $k$
is a rational variety:
the field $k(G)$ of rational functions on $G$
is a purely transcendental extension of $k$.
If $H \subseteq G$ is any closed subgroup,
the homogeneous variety $G/H$
is thus unirational:
its field $k(G/H)$ of rational functions
is contained in
a purely transcendental extension of $k$
(namely, $k(G)$).
It is thus natural to consider
the following:

\begin{problem}
\label{RatPbm}
Let $G$ be a connected linear algebraic group
over the algebraically closed field $k$,
and let $H \subseteq G$ be a closed subgroup.
Is the homogeneous variety $G/H$ rational?
Equivalently:
is the field $k(G/H)$ of rational functions on $G/H$
a purely transcendental extension of $k$?
\end{problem}

This is a long-standing question
in algebraic geometry,
known as the \emph{rationality problem}.
When $H$ is not assumed to be connected,
the answer is negative in general;
see remark~\ref{rmkSaltman} below.
When $H$ is connected,
the rationality problem
in the generality of~\ref{RatPbm}
is open
even when $\characteristic(k)=0$.
The problem was mentioned
(possibly for the first time)
in~\cite{Hab},
in which it was suggested that
the answer is negative in general
even when $H$ is connected,
although no counter-example
is known to date.
However,
affirmative answers have been established
in several cases;
for instance,
the rationality of $G/H$
when $\dim(G/H) \leqslant 4$
and $\characteristic(k)=0$
was proved in~\cite[Lemma 1.15]{MU}.

A more general form of
the rationality problem
is concerned with
the rationality of the field $k(X)^H$
when $X$ itself is a rational variety
on which a linear group $H$ acts.
Two typical versions of this problem
have been considered in the literature:
namely,
when $X$ is the underlying vector space of
a finite dimensional representation of $H$,
and when $X$ is the underlying variety of
a connected group $H$
acting on itself by conjugation;
see \cite{CTS} for a survey of the former,
and \cite{CTKPR} for works on the latter.
The variant~\ref{RatPbm} of the rationality problem
we consider in this paper
amounts to the case
when $X$ is the underlying variety of
a connected group $G$
on which a subgroup $H$ acts
by right multiplication.
Our goal is to give
several more criteria
under which
one can establish
an affirmative answer
to this variant of
the rationality problem,
and to the extent possible,
in a characteristic-free manner.

\begin{setup}
\quad
Here, we give an overview of
the main results of this paper.
We refer the reader to
theorems~\ref{Th0}, \ref{ThA}, \ref{ThB0} and \ref{ThB}
for the precise statements.

In theorem~\ref{Th0},
we show that
$G/H$ is rational
for any closed subgroup $H$
contained in a Borel subgroup $B$ of $G$;
in particular,
this is so whenever
$G$ or $H$ is connected and solvable
(for then
$H$ is necessarily contained in
some Borel subgroup of $G$).
This result holds
for any characteristic of $k$
and is probably folklore
to the experts,
although it seems not to have appeared
in the literature.
It is analogous to
a theorem of Miyata (cf.~\cite{Mi})
asserting that
the field $k(V)^H$ is
a purely transcendental extension of $k$
when $H$ is a linear group
and $V$ is a finite dimensional representation of $H$
which is triangularizable.
We establish theorem~\ref{Th0}
via a certain splitting principle
for the quotient map $G \dasharrow G/B$
(cf.~cor.~\ref{solv split})
which was ``morally speculated'' by Prof.~V.~Popov
in a conversation with the second author.
The main tools needed are
some classical results of Rosenlicht
(cf.~lemmas~\ref{tSS} and \ref{Rosenlict generic section}).

In theorem~\ref{ThA},
we show that
$G/H$ is rational
for any connected closed subgroup $H$
of maximal rank in $G$,
if the maximal semisimple quotient of $G$
is isogenous to
a product of almost-simple groups of
type~$A$,
type~$C$ (when $\characteristic(k) \neq 2$),
or
type~$B_3$ or $G_2$ (when $\characteristic(k) = 0$).
Here, the main ingredient is
the Borel-de~Siebenthal algorithm
(cf.~\cite[Th.~6]{BdS}
and the algebraic group version
in~\cite[prop.~6.6]{Le})
classifying
maximal connected reductive subgroups
of maximal rank
in a given connected semisimple group.
We also employ
general structural results of algebraic groups
such as the Bruhat decomposition
(cf.~lemma~\ref{parabolic section}),
the theorem of Borel-Tits
(cf.~lemma~\ref{Borel-Tits}),
as well as
properties of special groups
(in the sense of Serre).
Our theorem~\ref{ThA} may be compared
with~\cite[Th.~0.2]{CTKPR},
where the authors prove, among other things, that
for an almost-simple group $G$ of type~$A$ or~$C$,
acting on itself by conjugation,
the field extension $k(G)/k(G)^{G}$
is purely transcendental.

In sections~\ref{low dim homog} and~\ref{low dim homog ctd},
we assume $\characteristic(k)=0$
and use geometric arguments
(cf.~lemma~\ref{finbir})
to show the rationality of certain
low dimensional homogeneous varieties $G/H$,
our starting point being
the classical theorems of L\"uroth and Castelnuovo
(cf.~lemma~\ref{dim1or2})
asserting the rationality of
unirational varieties of dimension~$\leqslant 2$
(over our algebraically closed field $k$ of characteristic zero).
We show in theorem~\ref{ThB0} that
$G/H$ is rational
whenever $\dim(G/H) \leqslant 5$,
and in theorem~\ref{ThB} that
if $H$ is connected,
$G/H$ is rational
whenever $\dim(G/H) \leqslant 10$.
We follow the approach in~\cite[Lemma 1.15]{MU}
of reducing to the case when $G$ is semisimple,
but we utilize new ingredients
such as the geometry of the big cell
and the structure of the centralizers of subtori
in a connected reductive group
(cf.~\cite[\S 28.5, \S 22.4, \S 26.2]{Hu}).

\end{setup}

\begin{remark}
\label{rmkSaltman}
Our results provide
affirmative answers to
the rationality problem~\ref{RatPbm}
in the respective situations,
but except for
low dimensional homogeneous varieties
(i.e.~when $\dim(G/H) \leqslant 5$
as in theorem~\ref{ThB0}),
we require that
the closed subgroup
$H \subseteq G$
be connected.
This is not surprising,
since there are examples of
finite subgroups $H$
in a product $G$ of general linear groups
for which
$G/H$ is not rational.
Indeed,
by the work of Saltman \cite[Th.~3.6]{Sa}
in the context of
the Noether problem,
one knows that
for each prime~$p$,
there exist finite $p$-groups $H$
with the following property:
for any algebraically closed field $k$
with $\characteristic(k)\neq p$,
if $V$ denotes
the vector space of
the regular representation of $H$
over $k$,
and if $k(V)^H$ denotes
the subfield of $H$-invariants
in the field $k(V)$ of
rational functions on $V$ over $k$,
then $k(V)^H$ is not
a purely transcendental extension of $k$,
so the variety $V/H$ is not rational.
If we decompose
$V \cong \bigoplus_i V_i^{\oplus m_i}$
into irreducible representations $V_i$ of $H$,
then $m_i = \dim_k V_i$,
and the linear representation of $H$
on the isotypic component
$V_i^{\oplus m_i}$
yields an action of $H$
by right multiplication on
$G_i := \GL_{m_i}(k)$.
Thus
$V$ contains an $H$-stable open subvariety
isomorphic to
$G := \prod_i G_i$.
Since $V/H$ is not rational,
$G/H$ is not rational either.
\end{remark}

\begin{setup}
\quad
As we are mainly interested in
the birational properties of homogeneous varieties,
we consider only rational actions
and rational quotients
(unless explicitly stated otherwise).
A (right) \emph{rational action} of
a linear algebraic group $H$
on an irreducible variety $X$
(cf.~\cite[\S2.3]{BSU})
is a group homomorphism
$H \to \Aut(k(X))$
from $H$ to the group $\Aut(k(X))$ of
birational automorphisms of $X$,
such that the resulting rational map
$a : X \times H \dasharrow X$
is (defined and) regular on
an open dense subset of $X \times H$.
Given such a rational action,
the \emph{rational quotient of $X$ by $H$}
(in the sense of Rosenlicht,
cf.~\cite[Th.~2]{Ro56})
is any irreducible variety
whose field of rational functions
is identified with $k(X)^H$;
i.e.~it is a $k$-variety $X/H$
characterized up to birational equivalence
by the equality
$k(X/H) = k(X)^H$
of rational function fields.
\footnote{
Given a rational action of $H$ on $X$,
we may replace $X$
by another birational model
and assume that
$H$ acts on $X$ regularly
(cf.~\cite[Th.~1]{Ro56}),
and it then follows
(essentially by~\cite[Th.~2]{Ro56})
that
there exists
a geometric quotient of
(an open subset of) $X$ by $H$
(in the sense of GIT);
the rational quotient $X/H$
can thus be regarded as
the birational equivalence class of
the moduli space of
``general $H$-orbits in $X$''.
}
The $k$-inclusion
$k(X/H) \subseteq k(X)$ of fields
induces
a dominant rational map
$X \dasharrow X/H$,
called the \emph{rational quotient map},
which is $H$-equivariant
with respect to
the trivial action on $X/H$,
and has the universal property that
any dominant rational map from $X$
which is constant on
general $H$-orbits in $X$
factors through it.
Thus,
if $G$ is a connected linear algebraic group
and $H \subseteq G$ is a closed subgroup,
the homogeneous space $G/H$ of $H$-cosets in $G$
is regarded as the rational quotient of
the variety $G$
by the right multiplication action of $H$.
Similarly,
if $K \subseteq G$ is another closed subgroup,
the space of double cosets
$K \bksl G / H$
is both the rational quotient of
the variety $K \bksl G$
by the right multiplication action of $H$,
as well as
the rational quotient of
the variety $G/H$
by the left multiplication action of $K$.

\end{setup}

\bigskip

\noindent
{\bf Acknowledgement.}

The authors heartily thank
our friends and colleagues,
and especially the referee of
an earlier version of this paper,
for the remarks~\ref{rmkSaltman} and~\ref{rmkBGBHrational},
and also
prop.~\ref{PropB}(d),
which allows the case of type~$C$
in theorem~\ref{ThA}
to be included.
The second author would like to offer his thanks
to M.~Brion
for valuable discussions about the rationality of $G/H$
when $G$ is solvable
and
for reminding him of the reference to \cite{GIT}
in the argument for Lemma~\ref{Borel-Tits}.
He also thanks M.~Chen for the warm hospitality
during his visit to Fudan University
while working on this paper.
He is partially supported by an ARF of NUS.

%
%
%
%


\section{Quotients by subgroups contained in a Borel}

In this section,
we work over
an algebraically closed base field $k$
of arbitrary characteristic,
and we consider
the rationality problem~\ref{RatPbm}
for $G/H$
when $H$ is contained in
a Borel subgroup of $G$.
Our goal is to establish theorem~\ref{Th0}.
We first review
some classical results and arguments.

\begin{lemma}
[Rosenlicht]
\label{tSS}
Let $H$ be a linear algebraic group
acting rationally on a variety $X$,
and let $K \lhd H$ be
a closed normal subgroup.
Then $H' := H/K$ acts rationally on
the quotient variety
$X' := X/K$,
and $X'/H'$ is (naturally) birational to $X/H$.
\end{lemma}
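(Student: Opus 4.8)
The plan is to reduce the entire statement to the function-field characterization of rational quotients, namely $k(X/H) = k(X)^H$ and $k(X/K) = k(X)^K$. First I would note that, since $K \subseteq H$ is closed, the restriction to $K$ of the rational $H$-action on $X$ is itself a rational action, so the rational quotient $X' = X/K$ is defined and $k(X') = k(X)^K$ inside $k(X)$. Everything then takes place inside the single field $k(X)$, on which $H$ acts via the given homomorphism $H \to \Aut(k(X))$.

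Next I would exploit the normality of $K$. For $h \in H$, $f \in k(X)^K$ and $k \in K$ one has $(f\cdot h)\cdot k = f\cdot(hkh^{-1})\cdot h = f\cdot h$, because $hkh^{-1} \in K$; hence the subfield $k(X)^K = k(X')$ is stable under $H$. Moreover $K$ acts trivially on $k(X)^K$ by the definition of invariants, so the induced $H$-action on $k(X')$ factors through $H' = H/K$, yielding a group homomorphism $H' \to \Aut(k(X'))$. Geometrically this is simply the statement that $H$ permutes the $K$-orbits, since $h\cdot(K\cdot x) = (hKh^{-1})\cdot(h\cdot x) = K\cdot(h\cdot x)$, inducing an action of $H$ on $X'$ under which $K$ acts trivially.

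The step I expect to be the main obstacle is checking that this homomorphism defines a \emph{rational} action in the technical sense, i.e.\ that the resulting rational map $a' : X' \times H' \dasharrow X'$ is regular on a dense open subset, rather than merely an abstract action on the function field. Here I would use that the rational quotient map $\pi : X \dasharrow X'$ and the quotient homomorphism $q : H \to H'$ intertwine the two action maps, $\pi \circ a = a' \circ (\pi \times q)$, where $a : X \times H \dasharrow X$ is regular on a dense open by hypothesis. Since $\pi \times q$ is dominant, a descent argument for the domain of definition (or, equivalently, an appeal to Rosenlicht's regularization of rational actions, \cite[Th.~1]{Ro56}) produces a dense open subset of $X' \times H'$ on which $a'$ is regular.

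Finally, to identify the iterated quotient I would compute invariants inside $k(X)$: since the $H'$-action on $k(X')$ factors the $H$-action, $(k(X)^K)^{H'} = (k(X)^K)^{H} = k(X)^K \cap k(X)^H = k(X)^H$, the last equality because $K \subseteq H$ forces $k(X)^H \subseteq k(X)^K$. Therefore $k(X'/H') = k(X')^{H'} = k(X)^H = k(X/H)$, and this equality of rational function fields is precisely the asserted natural birational equivalence of $X'/H'$ with $X/H$.
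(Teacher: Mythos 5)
Your argument is correct, but it is worth noting that the paper does not actually prove this lemma: it simply cites Rosenlicht \cite[Th.~5]{Ro56}. What you have written is, in effect, a reconstruction of the proof of that cited theorem via the function-field characterization of rational quotients. Your two purely algebraic steps are exactly right: normality of $K$ makes $k(X)^K$ an $H$-stable subfield of $k(X)$ on which $K$ acts trivially, and the computation $(k(X)^K)^{H'} = k(X)^K \cap k(X)^H = k(X)^H$ (using $K \subseteq H$) identifies the function fields, which is precisely the meaning of the asserted natural birational equivalence. The one point that carries real content, and which you correctly single out, is that the induced homomorphism $H' \to \Aut(k(X'))$ is a \emph{rational} action, i.e.\ that $a' : X' \times H' \dasharrow X'$ is regular on a dense open set. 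Your commutation $\pi \circ a = a' \circ (\pi \times q)$ is the right mechanism; to make the descent precise one can check that for $f \in k(X)^K$ the function $a^*f$ on $X \times H$ is invariant under $K$ acting on each factor (again by normality), hence lies in $k(X \times H)^{K \times K} = k(X' \times H')$, which exhibits $a'$ as an honest rational map; generic regularity then follows, if one wants the action in regularized form, from \cite[Th.~1]{Ro56}. So the proposal is sound; its value relative to the paper is that it makes the content of the citation explicit, at the cost of quietly reusing Rosenlicht's regularization machinery at the one step you flag as the main obstacle.
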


See~\cite[Th.~5]{Ro56}.
In particular,
if $K \lhd H$ lies in the kernel of
the action of $H$ on the field $k(X)$
(i.e.~$K$ acts trivially on
an open dense subset of $X$),
then
$X' = X/K$ is (birational to) $X$ itself,
and hence
$X/H$ is birational to $X/H'$.
In other words,
in forming the rational quotient $X/H$,
we may replace $H$ by
its image in the birational automorphism group
$\Aut(k(X))$ of $X$.
This is also clear from
the characterization of
the rational quotient
by its rational function field.

Let $H$ be a linear algebraic group
acting rationally on a variety $X$.
The action is called \emph{generically free}
iff there exists
an open dense subset
$U \subseteq X$
such that
for every $x \in U$,
the stabilizer subgroup
${}_xH := \{h \in H \,|\, x \cdotp h = x\}$
is trivial.
An easy example of
a generically free action
is the right multiplication action of $H$
on a connected linear algebraic group $G$
which contains $H$ as a closed subgroup.
A generically free action
is necessarily
\emph{generically faithful},
i.e.~the homomorphism
$H \to \Aut(k(X))$
is injective;
but the converse does not hold in general.
However,
one has the following:

\begin{lemma}
\label{Demazure generic freeness}
Suppose $H$ is a torus
acting rationally on a variety $X$.
If the action is
generically faithful,
then it is
generically free.
In particular,
one has
$\dim X = \dim(X/H) + \dim H$.
\end{lemma}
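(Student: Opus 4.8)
The plan is to reduce to a morphic torus action on an affine variety, pass to the weight decomposition of the coordinate ring, and show that the stabilizer of a general point is \emph{constant} (as a subgroup of $H$, not merely in dimension) and equal to the kernel of the action; generic faithfulness then forces this common stabilizer to be trivial.

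First I would replace $X$ by a birational model on which $H$ acts regularly: lemma~\ref{tSS} and Rosenlicht's results permit this, and they leave $k(X)$ — hence $X/H$ and all the relevant dimensions — unchanged. After normalizing, a normal variety with torus action admits an $H$-stable dense affine open subset (for instance by Sumihiro's theorem), so I may assume $X = \Spec A$ is affine with $A = \bigoplus_{\chi} A_\chi$ graded by the character lattice $M := X^*(H) \cong \mathbb{Z}^n$, where $H$ acts on $A_\chi$ through the character $\chi$. Let $\Sigma := \{\chi \in M : A_\chi \neq 0\}$ be the support. Two observations drive the argument: (i) since $k(X) = \operatorname{Frac}(A)$ is generated by homogeneous elements, the kernel of $H \to \Aut(k(X))$ is exactly $\bigcap_{\chi \in \Sigma} \ker \chi$, which is trivial by generic faithfulness; and (ii) for $x \in X$ the stabilizer is ${}_xH = \bigcap_{\chi \in \Sigma_x} \ker \chi$, where $\Sigma_x := \{\chi \in \Sigma : f(x) \neq 0 \text{ for some } f \in A_\chi\}$.

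The key step is to make $\Sigma_x$ generate the same subgroup as $\Sigma$ on a dense open set. Since $M \cong \mathbb{Z}^n$ is Noetherian, I choose finitely many $\chi_1, \dots, \chi_m \in \Sigma$ generating the subgroup $\langle \Sigma \rangle$, together with nonzero $f_i \in A_{\chi_i}$; on the dense open set $U := \{x : f_i(x) \neq 0 \text{ for all } i\}$ one has $\langle \Sigma_x \rangle = \langle \Sigma \rangle$, so (ii) gives ${}_xH = \bigcap_{\chi \in \Sigma} \ker \chi$ for every $x \in U$. By (i) this common stabilizer is trivial, so the action is generically free on $U$. Finally, a general orbit is then isomorphic to $H$ and of dimension $\dim H$, so Rosenlicht's geometric-quotient theorem on a dense open subset yields $\dim(X/H) = \dim X - \dim H$, which is the asserted dimension formula.

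The heart of the matter is the gap between \emph{faithful} and \emph{free}: a nontrivial finite subgroup of $H$ could a priori be a generic stabilizer, and this is precisely where the torus hypothesis is indispensable — the weight-lattice description forces the generic stabilizer to be a single subgroup over an entire dense open set, so that it can only coincide with the (trivial) kernel of the action. I expect the one genuinely technical point to be the reduction to an $H$-stable affine model, which makes the grading $A = \bigoplus_\chi A_\chi$ available; everything after that is bookkeeping with the finitely generated lattice $M$.
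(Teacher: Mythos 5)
Your proof is correct. Note, however, that the paper does not actually prove this lemma: it simply cites Demazure~\cite[\S1, Prop.~10]{De}, so there is no internal argument to compare against. What you have written is, in essence, the standard proof underlying that citation: after passing to a regular action on a normal model and an $H$-stable affine chart, the weight decomposition $A=\bigoplus_\chi A_\chi$ identifies the stabilizer of $x$ with $\bigcap_{\chi\in\Sigma_x}\ker\chi$ and the kernel of the action with $\bigcap_{\chi\in\Sigma}\ker\chi$, and the finite generation of $\langle\Sigma\rangle\subseteq X^*(H)\cong\mathbb{Z}^n$ lets you pin $\langle\Sigma_x\rangle=\langle\Sigma\rangle$ on a dense open set, so the generic stabilizer is literally the kernel of the action. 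Each step checks out, including the characteristic-free nature of the argument (which matters, since the lemma sits in the arbitrary-characteristic part of the paper) and the dimension count at the end via Rosenlicht's quotient theorem. Two small remarks. First, the reduction to a regular action should be credited to~\cite[Th.~1]{Ro56} rather than to lemma~\ref{tSS}, which concerns quotients by normal subgroups. Second, Sumihiro's theorem is heavier machinery than is needed here: since the action on the regular model is a morphism, the $H$-orbit of any $f\in k(X)$ regular on some chart spans a finite-dimensional rational $H$-module, so one can directly produce a finitely generated $H$-stable subalgebra $A\subseteq k(X)$ with $\operatorname{Frac}(A)=k(X)$ and run your weight-monoid argument on $\Spec A$, bypassing both normalization and the equivariant affine cover. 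What your explicit argument buys over the paper's citation is precisely the point you emphasize at the end: it isolates where ``faithful implies free'' can fail for general groups and why it cannot fail for a diagonalizable one, namely that the generic isotropy group is a single diagonalizable subgroup cut out by the weight lattice.
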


See~\cite[\S1, Prop.~10]{De}.

\begin{lemma}
\label{spQ}
\footnote{
After the second author gave a talk at Fudan University
on a preliminary version of this paper,
the PhD student JinSong Xu of the host Professor Meng Chen
informed us of an independent proof of this result
in the case when
$X$ is a homogeneous variety.
}
Let $M$ be a linear algebraic group
acting rationally on a variety $X$.
Suppose that
the action of $M$ on $X$ is generically free,
and that
the rational quotient map
$\pi : X \dasharrow X/M$
admits a rational section
$s : X/M \dasharrow X$.
Then there exists
a birational map
\[
  f
  \ :\
  (X/M) \times M
  \dashrightarrow
  X
  \qquad
  \text{which is $M$-equivariant}
\]
with respect to
the natural right action of $M$
on the domain $\Dom(f)$ of $f$
via multiplication on the second factor $M$.
In particular,
for any closed subgroup $H \subseteq M$,
$X/H$ is birational to $(X/M) \times (M/H)$.
\end{lemma}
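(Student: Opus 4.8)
The plan is to define $f$ as the composite of the section with the action map. Explicitly, I set $f := a \circ (s \times \operatorname{id}_M)$, so that $f(\bar x, m) = s(\bar x)\cdot m$ for $\bar x \in X/M$ and $m \in M$; this is a well-defined rational map $(X/M)\times M \dashrightarrow X$ since $a$ is defined on a dense open of $X\times M$ meeting the locus over a general slice $s(X/M)$. The $M$-equivariance is then immediate from the associativity of the action: with $M$ acting on the source through the second factor, $f\big((\bar x,m)\cdot m'\big) = s(\bar x)\cdot(mm') = \big(s(\bar x)\cdot m\big)\cdot m' = f(\bar x,m)\cdot m'$. Moreover, since $\pi$ is constant on $M$-orbits and $s$ is a section of $\pi$, one computes $\pi\circ f = \operatorname{pr}_{X/M}$; in particular $f$ is dominant, and it carries each general fibre $\{\bar x\}\times M$ into the single fibre $\pi^{-1}(\bar x)$.

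Next I would record the numerics and the generic injectivity. Because the action is generically free, the general $M$-orbit has dimension $\dim M$, so $\dim(X/M) = \dim X - \dim M$ and hence $\dim\big((X/M)\times M\big) = \dim X$. On a general fibre $\{\bar x\}\times M$ the map $m \mapsto s(\bar x)\cdot m$ is injective, since the stabiliser of $s(\bar x)$ is trivial; and distinct fibres land in distinct $\pi$-fibres, so $f$ is generically injective.

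The crux is to upgrade generic injectivity to birationality, i.e.\ to produce a rational inverse $g : X \dashrightarrow (X/M)\times M$. I would take $g = (\pi,\tau)$, where $\tau : X \dashrightarrow M$ sends a general point $x$ to the unique element $\tau(x)$ with $s(\pi(x))\cdot\tau(x) = x$; such an element exists and is unique by generic freeness, and $f,g$ are then formally mutually inverse. The genuine difficulty is to see that $\tau$ is a \emph{rational} map. I would obtain it by inverting the shearing map $X\times M \dashrightarrow X\times_{X/M} X$, $(x,m)\mapsto(x,\,x\cdot m)$, onto the closure of the orbit relation and projecting to $M$. In characteristic zero, generic injectivity together with $\dim\big((X/M)\times M\big)=\dim X$ already forces $f$ to be birational, so $\tau$ exists automatically; in positive characteristic one must check separately that this inverse is rational rather than merely a bijection of points, the potential obstruction being inseparability of the orbit maps. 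Verifying this last point is the main obstacle of the argument.

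Finally, for the concluding assertion I would exploit the $M$-equivariance of $f$. Restricting the action to a closed subgroup $H\subseteq M$, the birational map $f$ is $H$-equivariant, where $H$ acts on $(X/M)\times M$ by right multiplication on the second factor and trivially on the first. By the birationality established above, $f^\ast$ is an $H$-equivariant isomorphism $k(X)\xrightarrow{\ \sim\ } k\big((X/M)\times M\big)$, hence restricts to an isomorphism of $H$-invariant subfields $k(X)^H \xrightarrow{\ \sim\ } k\big((X/M)\times M\big)^H$. The left-hand side is $k(X/H)$; on the right, since $H$ fixes the $X/M$-factor and acts by right multiplication on $M$, one has $k\big((X/M)\times M\big)^H = k\big((X/M)\times(M/H)\big)$, using $k(M)^H = k(M/H)$ and that the $X/M$-factor carries the trivial $H$-action. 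Therefore $X/H$ is birational to $(X/M)\times(M/H)$, as desired.
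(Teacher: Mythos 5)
Your construction of $f$ is the same as the paper's: $f(\bar x,m)=s(\bar x)\cdot m$, with the same equivariance computation and the same passage to $H$-invariants for the final claim. In characteristic zero your argument closes: a dominant, generically injective rational map between irreducible varieties of the same dimension is birational, so the inverse $\tau$ exists for free. The problem is that you have left unresolved precisely the step you yourself identify as ``the main obstacle'': in positive characteristic, bijectivity on points does not rule out an inseparable (Frobenius-like) $f$, and the lemma is stated and used in this paper over a field of arbitrary characteristic (it underlies Theorem~\ref{Th0} and the type~$A$ and type~$C$ cases of Theorem~\ref{ThA}). As it stands, your proof only establishes the lemma in characteristic zero, so this is a genuine gap rather than a stylistic omission.

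The paper closes that gap not by exhibiting $\tau$ directly but by first regularizing everything: using Rosenlicht's theorem it replaces $X$ by a birational model on which $M$ acts biregularly, shrinks to an $M$-stable open where the action is honestly free and $X$, $X/M$ are non-singular, and restricts to $\Dom(s)$ so that $s$ is a regular section; then $f$ becomes a bijective \emph{morphism} between normal varieties, and \cite[Lemma 6.14(ii)]{Bo} is invoked to conclude it is an isomorphism. If you want a characteristic-free argument you need some such input; mere point-set bijectivity is not enough. Two smaller points that the same regularization step also repairs in your write-up: (i) you need the action map $a$ to be defined at points of $s(X/M)\times M$, which is not automatic for a rational action since $s(X/M)$ is nowhere dense; and (ii) to know that the stabilizer of $s(\bar x)$ is trivial you should first arrange the free locus to be $M$-stable (as one may), since a priori the section could land outside the given dense open set of points with trivial stabilizer.
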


This is the birational analogue of
the classical statement that
a principal bundle with a section
is trivial.

\begin{proof}
We assume that
$M$ acts on $X$ from the right.
Replacing $X$ by another birational model,
we may assume
(cf.~\cite[Th.~1]{Ro56})
that
the action is biregular,
so that one has a quotient morphism
$\pi : X \to X/M$.
Since the singular locus of $X$
is stable under the action of $M$,
and the action
is generically free,
we may replace $X$
by an open dense subset
and assume that
$X$ is non-singular
and that
the action is in fact free,
i.e.~the stabilizer subgroup
${}_xM$ is trivial
for every $x \in X$.
Likewise,
shrinking $X/M$
(and $X$ correspondingly)
if necessary,
we also assume that
$X/M$ is non-singular.
Further replacing $X/M$ by
the domain $\Dom(s)$
of the rational section~$s$
and $X$ by $\pi^{-1}(\Dom(s))$,
we may assume that
$s : X/M \to X$ is a regular section,
i.e.~a morphism
such that
$\pi \circ s = \id_{X/M}$.
Thus we have a bijective morphism
\[
  f
  \ :\
  (X/M) \times M
  \longrightarrow
  X
  ,
  \qquad
  (y,m)
  \mapsto
  s(y) \cdotp m
\]
which is clearly $M$-equivariant:
$f(y,m) \cdotp m'
 =
 f(y, m \cdotp m')
$.
Since $X/M$ and $X$
are normal varieties,
we may apply
\cite[Lemma 6.14(ii)]{Bo}
to infer that
$f$ is in fact
an isomorphism.
This shows the main assertion
of the lemma.
The final claim follows
by passing to the quotient by $H$.
\end{proof}

\begin{lemma}
[Rosenlicht's generic section theorem]
\label{Rosenlict generic section}
Let $M$ be a connected solvable group
acting rationally on a variety $X$.
Then
the rational quotient map
$\pi : X \dasharrow X/M$
admits a rational section
$s : X/M \dasharrow X$.
\end{lemma}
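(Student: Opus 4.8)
The plan is to argue by induction on $\dim M$, using the structure theory of connected solvable groups to peel off one-dimensional quotients $\Ga$ or $\Gm$, for which a section can be produced directly via Hilbert's Theorem~90. The base case $\dim M = 0$ is trivial, since then $M = \{e\}$ and $\pi$ is birational.

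For the inductive step, suppose $\dim M \geq 1$. Since $M$ is connected solvable, I would choose a connected closed normal subgroup $N \lhd M$ with $M/N$ of dimension one, so that $M/N \cong \Ga$ or $M/N \cong \Gm$ (such $N$ exists: the quotient $M/[M,M]$ is a nontrivial connected commutative group, which admits a surjection onto $\Ga$ or $\Gm$, and one takes $N$ to be the identity component of the kernel of the composite $M \to M/[M,M] \to \Ga$ or $\Gm$). By Lemma~\ref{tSS}, the group $\bar M := M/N$ acts rationally on $X' := X/N$ with $X'/\bar M$ birational to $X/M$, and the quotient map $\pi : X \dasharrow X/M$ factors, up to these identifications, as
\[
  X \ \overset{\pi_N}{\dasharrow}\ X' \ \overset{\pi_{\bar M}}{\dasharrow}\ X'/\bar M = X/M .
\]
By the inductive hypothesis applied to the connected solvable group $N$ of smaller dimension, $\pi_N$ admits a rational section $s_1 : X' \dasharrow X$; by the base case applied to $\bar M \cong \Ga$ or $\Gm$, the map $\pi_{\bar M}$ admits a rational section $s_2 : X/M \dasharrow X'$. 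The composite $s := s_1 \circ s_2 : X/M \dasharrow X$ is then defined on an open dense subset and satisfies $\pi \circ s = \pi_{\bar M} \circ \pi_N \circ s_1 \circ s_2 = \pi_{\bar M} \circ s_2 = \id$, so it is the desired section.

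It remains to treat the base case $M \cong \Ga$ or $M \cong \Gm$. If $M$ acts trivially on $k(X)$ then $X/M$ is birational to $X$ and there is nothing to prove, so by Lemma~\ref{tSS} I may assume the action is generically faithful. For $M \cong \Gm$ this is generically free by Lemma~\ref{Demazure generic freeness}; for $M \cong \Ga$ one checks the action is generically free as well (the generic stabilizer is a proper, hence finite, closed subgroup, and one reduces to a free action after replacing $M$ by a quotient isogenous to $\Ga$). Passing to a birational model on which $M$ acts regularly (cf.~\cite[Th.~1]{Ro56}) and shrinking so that a geometric quotient $\pi : X \to X/M$ exists, the generic fibre $X_\eta := X \times_{X/M} \Spec K$, where $K := k(X/M)$, is a torsor under $M_K$ over $\Spec K$. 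Since $H^1(K, \Gm) = 0$ and $H^1(K, \Ga) = 0$ by Hilbert's Theorem~90 in its multiplicative and additive forms, this torsor is trivial, so $X_\eta$ has a $K$-rational point; such a point is precisely a rational section $s : X/M \dasharrow X$ of $\pi$, completing the base case.

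The main obstacle is the base case, and specifically the rigorous identification of the generic fibre $X_\eta$ with an $M_K$-torsor over the function field $K$: this requires the reduction to a genuinely free action and the passage to a geometric quotient, and in positive characteristic the additive case demands some care, since a generically faithful $\Ga$-action may have infinitesimal or \'etale generic stabilizers, forcing one to replace $\Ga$ by an isogenous quotient before the torsor interpretation applies. Once the torsor structure is in place, the vanishing of $H^1(K,\Ga)$ and $H^1(K,\Gm)$ (equivalently, the fact that $\Ga$ and $\Gm$ are special in the sense of Serre) does all the work; the inductive step is then purely formal bookkeeping with the domains of definition of the rational maps involved.
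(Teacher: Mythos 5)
The paper offers no argument of its own here --- it simply cites \cite[Th.~10]{Ro56} --- so your proposal is really being measured against Rosenlicht's original proof, of which it is the natural modern reconstruction (filter $M$ by normal subgroups with one-dimensional quotients, then use the specialness of $\Ga$ and $\Gm$). The strategy is right, but two steps are genuine gaps rather than bookkeeping. The first is the composite $s_1 \circ s_2$ in the inductive step: $s_2 : X/M \dasharrow X'$ is a \emph{section}, hence not dominant, and its image $W$ is a proper subvariety of $X'$ whenever $\bar M$ acts nontrivially; nothing prevents $W$ from lying entirely inside the indeterminacy locus of $s_1$, which is only guaranteed to be defined on \emph{some} dense open subset of $X'$. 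The standard repair is to apply the inductive hypothesis not to $X \dasharrow X'$ but to the action of $N$ on $Y := \overline{\pi_N^{-1}(W)}$: then $Y/N$ is birational to $W$, hence to $X/M$, and a rational section of $Y \dasharrow Y/N$ followed by $Y \hookrightarrow X$ gives the section of $\pi$. Calling the inductive step ``purely formal'' hides exactly the point where care is needed.

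The second gap is the base case for $\Ga$ in positive characteristic, which you correctly flag as the main obstacle but for which your proposed fix does not work. The generic stabilizer of a generically faithful $\Ga$-action in characteristic $p$ can be a nontrivial finite subgroup scheme that \emph{varies from point to point}: for the action $(x,y)\cdot t = (x,\ y + x t^p - x^p t)$ on $\bA^2$, the stabilizer of $(x,y)$ is the subgroup $\mathbb{F}_p\, x \subseteq \Ga$. Since these stabilizers are not all equal, there is no single isogenous quotient of $\Ga$ that acts freely, and the generic fibre is a homogeneous space under $\Ga_K$ but not a torsor --- yet a cross-section exists (here $\{y=0\}$). So the torsor/Hilbert~90 mechanism cannot be the whole story for $\Ga$ in characteristic $p$; Rosenlicht's treatment of the additive one-dimensional case is genuinely different. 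This matters for the paper, which invokes the lemma in arbitrary characteristic (it underlies theorem~\ref{Th0} and, via cor.~\ref{solv split}, the type~$A$ and~$C$ cases of theorem~\ref{ThA}). In characteristic $0$ your base case is sound: a generically faithful action of $\Gm$ is generically free by lemma~\ref{Demazure generic freeness}, a generically faithful action of $\Ga$ is generically free because $\Ga$ then has no nontrivial proper closed subgroups, and the vanishing of $H^1(K,\Ga)$ and $H^1(K,\Gm)$ finishes the argument as you describe.
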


See~\cite[Th.~10]{Ro56}.

\begin{corollary}
\label{solv split}
Let $G$ be a connected linear algebraic group,
and let $H \subseteq G$ be any closed subgroup
contained in a Borel subgroup $B$ of $G$.
Then
$G/H$ is birational to $(G/B) \times (B/H)$.
\end{corollary}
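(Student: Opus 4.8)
The plan is to obtain the corollary as the composite of Lemmas~\ref{spQ} and~\ref{Rosenlict generic section}, applied to $X = G$ with $M = B$ acting by right multiplication, and then specializing to the closed subgroup $H \subseteq B$.

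First I would verify the hypotheses of Lemma~\ref{spQ}. Taking $X = G$ and letting $M = B$ act on $G$ by right multiplication, the action is generically free—indeed free—since $B$ is a closed subgroup of the connected linear algebraic group $G$ (this is the easy example of a generically free action recalled above). By the convention fixed in the setup, the rational quotient of $G$ by this right multiplication action is exactly the homogeneous variety $G/B$, so that $X/M = G/B$.

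Next I would supply the rational section demanded by Lemma~\ref{spQ}. Because $B$ is a Borel subgroup, it is connected and solvable, so Rosenlicht's generic section theorem (Lemma~\ref{Rosenlict generic section}) applies and guarantees that the rational quotient map $\pi : G \dashrightarrow G/B$ admits a rational section $s : G/B \dashrightarrow G$.

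With both hypotheses in place, I would conclude by specializing the final assertion of Lemma~\ref{spQ} to the closed subgroup $H \subseteq B = M$: this yields that $G/H = X/H$ is birational to $(X/M) \times (M/H) = (G/B) \times (B/H)$, which is the claim. The point to watch—rather than a genuine obstacle—is simply the matching of hypotheses, namely that right multiplication by $B$ is generically free and that $B$ is connected solvable so that the generic section theorem is available; once these are checked, no further work remains, since the substantive content is already carried by the two lemmas.
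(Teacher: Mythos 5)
Your argument coincides with the paper's own proof: both invoke Rosenlicht's generic section theorem (lemma~\ref{Rosenlict generic section}) for the connected solvable group $B$ to produce a rational section of $G \dashrightarrow G/B$, observe that right multiplication by $B$ is generically free, and then apply the final assertion of lemma~\ref{spQ} with $H \subseteq M = B$. The hypothesis-checking you flag is exactly what the paper does, and nothing further is needed.
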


\begin{proof}
Since $B$ is connected and solvable,
the quotient map $G \dasharrow G/B$
has a rational section
by lemma~\ref{Rosenlict generic section}.
The right-multiplication action of $B$ on $G$
is generically free,
so lemma~\ref{spQ} is applicable
and yields the corollary.
\end{proof}

\begin{lemma}
\label{Sol}
Let $M$ be a connected solvable group
acting rationally on a variety $X$.
Then
$X$ is birational to
$(X/M) \times \bP^d$
for some $d \leqslant \dim M$.
In particular,
if $X/M$ is rational,
then so is $X$.
\end{lemma}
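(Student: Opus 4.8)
The plan is to argue by induction on $\dim M$, peeling off a one-dimensional connected closed normal subgroup at each stage and absorbing it as a factor of $\bP^1$. The base case $\dim M = 0$ is immediate: then $M$ is trivial, so $X = X/M = (X/M)\times\bP^{0}$. For the inductive step I first invoke the structural input that a connected solvable group $M$ of positive dimension contains a one-dimensional connected closed normal subgroup $N$, necessarily isomorphic to $\Ga$ or $\Gm$ (standard structure theory, cf.~\cite{Hu}: if $M$ is a torus, take any one-dimensional subtorus, which is automatically normal; otherwise take the last nontrivial term of the descending central series of the unipotent radical, a nontrivial commutative normal subgroup, and extract from it a weight line $\cong\Ga$ for a maximal torus). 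Fix such an $N\lhd M$, so that $\Mbar := M/N$ is connected solvable of dimension $\dim M - 1$.

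The heart of the argument is the one-step claim: $X$ is birational to $(X/N)\times\bP^{e}$ with $e\in\{0,1\}$. The kernel $F$ of the action of $N$ on $k(X)$ is a closed subgroup of $N\cong\Ga$ or $\Gm$, hence all of $N$, or finite, or trivial. Applying lemma~\ref{tSS} to $F\lhd N$, I replace $N$ by $N/F$ and $X$ by $X/F$ (which is birational to $X$, since $F$ acts trivially on $k(X)$), thereby reducing to the case in which $N$ acts generically faithfully, with $N$ still isomorphic to $\Ga$ or $\Gm$ unless it has become trivial. If it is now trivial, then $X/N\sim X$ and $e=0$. Otherwise the generically faithful action of the abelian group $N$ is in fact generically free --- for $\Gm$ this is lemma~\ref{Demazure generic freeness}, and for $\Ga$ it holds because the generic stabilizer of an abelian group action is generically constant and therefore coincides with the kernel of the action, which has just been made trivial. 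Since $N$ is connected solvable, lemma~\ref{Rosenlict generic section} supplies a rational section of $X\dashrightarrow X/N$, and lemma~\ref{spQ} (taking the subgroup $H$ there to be trivial) yields $X\sim (X/N)\times N$. As $N$ is $\Ga$ or $\Gm$, each birational to $\bP^{1}$, this gives $e=1$.

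It remains to assemble the induction. By lemma~\ref{tSS}, $\Mbar=M/N$ acts rationally on $\overline{X} := X/N$ with $\overline{X}/\Mbar \sim X/M$, so the inductive hypothesis gives $\overline{X} \sim (X/M)\times\bP^{d'}$ for some $d'\leqslant\dim\Mbar = \dim M - 1$. Combining this with the one-step claim and the birational identity $\bP^{a}\times\bP^{b}\sim\bP^{a+b}$, I obtain
\[
  X \ \sim\ \overline{X}\times\bP^{e} \ \sim\ (X/M)\times\bP^{d'}\times\bP^{e} \ \sim\ (X/M)\times\bP^{d},
  \qquad d := d'+e \leqslant \dim M,
\]
which is the assertion; the final ``in particular'' is then immediate, since a product of a rational variety with $\bP^{d}$ is rational.

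The main obstacle I anticipate lies in the generic-freeness claim within the one-step argument in positive characteristic: for $N\cong\Ga$ one must rule out a nontrivial finite generic stabilizer, which I would resolve through the generic constancy of stabilizers for the action of the commutative group $N$ (so that the generic stabilizer equals the kernel $F$, already removed in the reduction), the torus analogue being exactly lemma~\ref{Demazure generic freeness}. A secondary point demanding care is the structural extraction of a normal $N\cong\Ga$ from the unipotent radical when $\characteristic(k)=p>0$, where commutative unipotent groups need not be vector groups.
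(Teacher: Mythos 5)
Your overall architecture coincides with the paper's: both arguments run through a normal series of $M$ with one\hbox{-}dimensional connected subquotients, replace each subquotient by its image in $\Aut(k(X))$ via lemma~\ref{tSS}, and contribute at most one factor of $\bP^1$ per step, with the $\Gm$ case handled by lemma~\ref{Demazure generic freeness}, lemma~\ref{Rosenlict generic section} and lemma~\ref{spQ} exactly as you do. The divergence --- and the gap --- is your treatment of the $\Ga$ case. The assertion that a generically faithful $\Ga$-action is generically free because ``the generic stabilizer of an abelian group action is generically constant and therefore coincides with the kernel'' is false in characteristic $p>0$. Take $X=\bA^2$ and let $\Ga$ act by $(x,y)\cdotp t=(x,\,y+xt^p-x^pt)$; since $t\mapsto xt^p-x^pt$ is additive in $t$, this is a regular action, and it is faithful (only $t=0$ fixes every point with $x$ generic), yet the stabilizer of any point with $x\neq 0$ is the order-$p$ subgroup $\mathbb{F}_p\cdotp x$ of $\Ga$, which varies with $x$ and is never trivial. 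So stabilizers of a commutative (unipotent) group action need not be generically constant, the action is nowhere generically free, and lemma~\ref{spQ} --- whose hypothesis is generic freeness --- cannot be invoked. In characteristic $0$ your step is harmless (for the simpler reason that $\Ga$ then has no nontrivial proper closed subgroups, so generically faithful already implies generically free), but the lemma sits in the characteristic-free part of the paper and is used for the arbitrary-characteristic results, so the gap is genuine.

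The paper sidesteps this point entirely: in the $\Ga$ case it replaces $X$ by an affine birational model and quotes \cite[Prop.~14.2.2]{Sp} to obtain $X_i$ birational to $(X_i/H_i')\times H_i'$ directly, with no freeness claim. To repair your argument you would need that external input (or some substitute for lemma~\ref{spQ} valid for $\Ga$-actions with finite, non-constant stabilizers in characteristic $p$). Your secondary worry, about extracting a one-dimensional connected normal subgroup of $M$ in characteristic $p$, is not a real obstruction: the standard normal series of a connected solvable group with subquotients isomorphic to $\Ga$ or $\Gm$ exists in all characteristics, and you only need its bottom term.
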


\begin{proof}
Since $M$ is connected solvable,
we can find
a sequence of connected normal closed subgroups
$M = M_r \rhd M_{r-1} \rhd \cdots \rhd M_1 \rhd M_0 = \{1\}$
such that
the subquotients $M_{i+1}/M_i$
are 1-dimensional groups,
isomorphic to either $\Gm$ or $\Ga$.
For each $i \in \{0,\ldots,r{-}1\}$,
we will show that
$X/M_i$ is birational to
$X/M_{i+1}$ or $(X/M_{i+1}) \times \bP^1$;
by descending induction on $i$,
it would then follow that
$X = X/M_0$ is birational to
$(X/M_r) \times \bP^d$
for some $d \leqslant r = \dim M$,
whence the lemma.

Consider the rational action of
$H_i := M_{i+1}/M_i$
on the variety $X_i := X/M_i$,
and let $H_i'$ denote the image of $H_i$
in $\Aut(k(X_i))$;
by lemma~\ref{tSS},
$X/M_{i+1}$ is naturally birational to $X_i/H_i'$,
and
$H_i'$ is either trivial
or isomorphic to $\Gm$ or $\Ga$.
If $H_i' \cong \Gm$,
by lemma~\ref{Demazure generic freeness},
the action of $H_i'$ on $X_i$
is generically free,
and
by lemma~\ref{Rosenlict generic section},
the rational quotient map
$X_i \dasharrow X_i/H_i'$
admits a rational section
$X_i/H_i' \dasharrow X_i$,
whence by lemma~\ref{spQ},
$X_i$ is birational to $(X_i/H_i') \times H_i'$.
If $H_i' \cong \Ga$,
we replace $X$
by a suitable birational model
and assume that
it is affine,
and then apply~\cite[Prop.~14.2.2]{Sp}
to see that
$X_i$ is birational to $(X_i/H_i') \times H_i'$,
which proves what we want.
\end{proof}

\begin{corollary}
\label{Sol quotient}
Let $M$ be a connected solvable group.
Then for any closed subgroup
$H \subseteq M$,
the quotient variety $M/H$
is rational.
\end{corollary}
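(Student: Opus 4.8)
The plan is to deduce this directly from lemma~\ref{Sol}, applied not to $M$ acting on some auxiliary variety but to $M$ acting on the homogeneous space $X := M/H$ itself. First I would note that, since $M$ is connected, $X = M/H$ is an irreducible variety, and I would equip it with the action of $M$ by left translations. The paper's definition of rational action is phrased for right actions, but left translation by $M$ on $M/H$ is equivalent to a right action of $M$ (replace $m$ by $m^{-1}$, i.e.\ regard it as a right action of $M^{\mathrm{op}} \cong M$, so that $x \star m := m^{-1} \cdotp x$); as $M$ is connected solvable, lemma~\ref{Sol} therefore applies verbatim to this action.

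The key observation is that this action is \emph{transitive}: $M$ acts transitively on the coset space $M/H$. Consequently, any $M$-invariant rational function on $X$ takes a single value on the (dense) orbit of a general point, hence is constant; thus $k(X)^M = k$, and the rational quotient $X/M$ is a single point. I would then invoke lemma~\ref{Sol} to conclude that
$X = M/H$ is birational to $(X/M) \times \bP^d \cong \bP^d$
for some $d \leqslant \dim M$ (necessarily $d = \dim(M/H)$). This exhibits $M/H$ as a rational variety, which is exactly what is claimed.

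The argument is short precisely because the substantive work has already been carried out in lemma~\ref{Sol}, whose proof peels off one $\Gm$- or $\Ga$-factor at a time along a composition series of $M$. For the present corollary the only point requiring any care is the identification of the rational quotient of a transitive action with a point, i.e.\ the fact that a homogeneous variety carries no non-constant rational functions invariant under the full translation action; but this is immediate from transitivity together with the characterization of the rational quotient by its function field $k(X)^M$. I do not anticipate any further obstacle, so the corollary follows at once from lemma~\ref{Sol}.
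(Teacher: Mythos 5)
Your proposal is correct and follows essentially the same route as the paper: apply lemma~\ref{Sol} to the natural left action of $M$ on $X := M/H$ and observe that transitivity forces the rational quotient $M \backslash X$ to be a point. The extra remarks on converting the left action to a right action and on $k(X)^M = k$ are fine but not needed beyond what the paper already records.
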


\begin{proof}
Apply lemma~\ref{Sol}
to the natural left action of $M$
on $X := M/H$,
and note that
the rational quotient $M \backslash X$
of $X$ by $M$
is a point.
\end{proof}

\begin{remark}
In lemma~\ref{Sol},
the connected solvable group $M$
acts on a variety $X$
which is not necessarily a group;
this mildly generalizes~\cite[Cor.~1 to Th.~10]{Ro56},
and will be very convenient
for us later on.
However,
cor.~\ref{Sol quotient}
which is deduced from lemma~\ref{Sol}
does not give
the best result:
the quotient variety $M/H$ there
is in fact
isomorphic to
a product of copies of
$\Ga$ and $\Gm$;
see~\cite[Theorem 5]{Ro63}.
Both results,
as well as
lemma~\ref{Rosenlict generic section},
hold for split connected solvable
linear algebraic groups
over an arbitrary base field,
by essentially
the same argument.
\end{remark}

\begin{theorem}
\label{Th0}
Let $G$ be a connected linear algebraic group,
and let $H \subseteq G$ be any closed subgroup
contained in a Borel subgroup of $G$.
Then
$G/H$ is a rational variety.
\end{theorem}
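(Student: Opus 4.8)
The plan is to reduce the rationality of $G/H$ to that of the flag variety $G/B$ and of the solvable quotient $B/H$, and then to dispose of each factor separately. First I would invoke cor.~\ref{solv split}: since $H$ is contained in the Borel subgroup $B$ of $G$, the variety $G/H$ is birational to the product $(G/B) \times (B/H)$. It therefore suffices to show that each factor is rational, because a product of rational varieties is again rational — the function field of the product is the composite of two purely transcendental extensions of $k$, hence itself purely transcendental over $k$.

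For the factor $B/H$, the argument is immediate: $B$ is connected and solvable, and $H \subseteq B$ is a closed subgroup, so cor.~\ref{Sol quotient} (with $M = B$) applies directly and shows that $B/H$ is rational. For the factor $G/B$, I would use the classical rationality of the full flag variety, which holds in arbitrary characteristic via the Bruhat decomposition. Concretely, fixing a maximal torus $T \subseteq B$ and letting $B^-$ be the opposite Borel subgroup with unipotent radical $U^-$, the orbit map $u \mapsto u \cdot eB$ identifies $U^-$ with the big cell, an open dense subvariety of $G/B$; since $U^-$ is isomorphic as a variety to an affine space $\bA^N$ with $N = \dim(G/B)$, the flag variety $G/B$ is rational. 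Combining the two factors, $(G/B) \times (B/H)$ is rational, and hence so is $G/H$.

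The only input beyond the two corollaries already established is the rationality of $G/B$, and I expect that to be the sole substantive step; it is handled classically through the big cell, and since that description of the flag variety is available in every characteristic, the whole argument is characteristic-free, as asserted. Thus once cor.~\ref{solv split} and cor.~\ref{Sol quotient} are in hand, there is no genuine obstacle, only the assembling of the two rational factors.
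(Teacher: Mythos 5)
Your proposal is correct and follows essentially the same route as the paper: cor.~\ref{solv split} to split $G/H$ as $(G/B)\times(B/H)$, cor.~\ref{Sol quotient} for $B/H$, and the Bruhat big cell for the rationality of $G/B$ (which the paper delegates to lemma~\ref{parabolic section}). No gaps.
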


\begin{proof}
Let $B$ be a Borel subgroup of $G$
containing $H$.
Then $G/H$ is birational to
$(G/B) \times (B/H)$
by cor.~\ref{solv split}.
The quotient flag variety $G/B$
is rational
(well-known; see also
lemma~\ref{parabolic section} below),
and $B/H$ is rational
by cor.~\ref{Sol quotient}.
Hence $G/H$ is rational.
\end{proof}

\bigskip

We record below
some reduction arguments
which will be useful later on.

\begin{lemma}
\label{mod radical}
Let $G$ be a connected linear algebraic group
and let $H \subseteq G$ be any closed subgroup.
Let $R := R(G)$ be
the solvable radical of $G$,
let $G' := G/R$ be
the maximal semisimple quotient of $G$,
and let $H' := H/(H \cap R)$ be
the image of $H$ in $G'$.
Then
$G/H$ is birational to
$G'/H' \times \bP^s$
for some $s \leqslant \dim R$.
In particular,
if $G'/H'$ is rational
then so is $G/H$.
\end{lemma}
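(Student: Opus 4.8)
The plan is to use the solvable radical $R = R(G)$ as the group $M$ in Lemma~\ref{Sol}, applied to the homogeneous variety $X := G/H$ under a suitable action of $R$. The key observation is that $G/H$ carries a natural \emph{left} multiplication action by $R$, and since $R$ is connected and solvable, Lemma~\ref{Sol} will tell us that $G/H$ is birational to $(R\backslash(G/H)) \times \bP^d$ for some $d \leqslant \dim R$. So the real task reduces to identifying the rational quotient $R\backslash(G/H)$ with $G'/H'$.

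First I would set up the action carefully. Since $R$ is normal in $G$, the left-multiplication action of $R$ on $G$ descends, and combined with the right $H$-action it gives a well-defined rational action of $R$ on $X = G/H$; this is the $M$-action to feed into Lemma~\ref{Sol}. Then I would apply Lemma~\ref{Sol} to conclude that $X$ is birational to $(X/R) \times \bP^d$ with $d \leqslant \dim R$, where $X/R$ denotes the rational quotient $R\backslash X$. The bulk of the argument is the identification
\[
  R \backslash (G/H)
  \ \simeq\
  (R\backslash G)/(\text{image of }H)
  \ =\
  G'/H',
\]
which I would justify via the double-coset interpretation spelled out in the setup paragraph: the double-coset space $R\backslash G/H$ is simultaneously the rational quotient of $G/H$ by the left $R$-action and the rational quotient of $R\backslash G = G'$ by the right action of the image of $H$. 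Since $R$ is normal, $R\backslash G$ is literally the quotient group $G' = G/R$, and the image of $H$ in $G'$ is exactly $H' = H/(H\cap R)$; hence $R\backslash G/H$ is birational to $G'/H'$.

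The main obstacle I expect is purely bookkeeping rather than deep: verifying that the image of $H$ acting on $G'$ is genuinely $H' = H/(H\cap R)$ and not some larger or smaller group, and checking that the right multiplication by this image really does have $G'/H'$ as its rational quotient. This needs the fact that the kernel of $H \to G'$ is precisely $H \cap R$, so that by Lemma~\ref{tSS} we may pass to the faithful image without changing the rational quotient. Once the identification $R\backslash(G/H) \simeq G'/H'$ is in hand, the birational equivalence $G/H \sim G'/H' \times \bP^d$ with $d \leqslant \dim R$ follows immediately from Lemma~\ref{Sol}, and the final assertion is then clear: if $G'/H'$ is rational, then $G'/H' \times \bP^d$ is rational, hence so is $G/H$.
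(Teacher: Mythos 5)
Your proposal is correct and follows essentially the same route as the paper: apply Lemma~\ref{Sol} to the natural left action of $R$ on $G/H$, then identify the rational quotient $R\backslash(G/H)$ with $G/HR \cong (G/R)/(H/H\cap R) = G'/H'$ using normality of $R$. The paper's proof is just a more compressed version of the same two steps.
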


\begin{proof}
By lemma~\ref{Sol}
applied to the natural left action of $R$ on $G/H$,
we see that
$G/H$ is birational to
$(G/HR) \times \bP^s$
for some $s \leqslant \dim R$.
The result follows
from the observation that
$G/HR$ is isomorphic to
$(G/R)/(H/H \cap R) = G'/H'$.
\end{proof}

\begin{lemma}
\label{no-name-lemma}
For $i=1,2$,
let $M_i$ be a connected solvable group
acting rationally on a variety $X_i$.
Assume that
$X_1/M_1$ is birational to $X_2/M_2$,
and that
$\dim X_1 \leqslant \dim X_2$.
Then
$X_2$ is birational to $X_1 \times \bP^d$
where $d := \dim X_2 - \dim X_1$.
In particular,
if $X_1$ is rational,
then so is $X_2$.
\end{lemma}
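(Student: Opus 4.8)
The plan is to apply lemma~\ref{Sol} to each of the two solvable actions and then to compare the resulting product decompositions. First I would apply lemma~\ref{Sol} to $M_1$ acting on $X_1$ and to $M_2$ acting on $X_2$ separately, obtaining integers $d_1 \leqslant \dim M_1$ and $d_2 \leqslant \dim M_2$ such that $X_1$ is birational to $(X_1/M_1) \times \bP^{d_1}$ and $X_2$ is birational to $(X_2/M_2) \times \bP^{d_2}$. Letting $Y$ denote a common birational model of the two rational quotients $X_1/M_1$ and $X_2/M_2$ (which exists by hypothesis), I would then record that $X_1$ is birational to $Y \times \bP^{d_1}$ and $X_2$ is birational to $Y \times \bP^{d_2}$.

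Next comes the dimension bookkeeping. Since birational varieties have equal dimension and $\dim(Y \times \bP^{d_i}) = \dim Y + d_i$, I obtain $\dim X_i = \dim Y + d_i$ for $i = 1,2$, so that $d := \dim X_2 - \dim X_1 = d_2 - d_1$. Here the hypothesis $\dim X_1 \leqslant \dim X_2$ is exactly what guarantees $d_2 \geqslant d_1$, i.e.~$d \geqslant 0$, so that $\bP^d$ is meaningful.

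Finally I would combine these observations. Using the elementary fact that a product of projective spaces $\bP^a \times \bP^b$ is birational to $\bP^{a+b}$ (both being rational of dimension $a+b$), I compute that $X_1 \times \bP^d$ is birational to $(Y \times \bP^{d_1}) \times \bP^{d} = Y \times \bP^{d_1} \times \bP^{d_2-d_1}$, which is birational to $Y \times \bP^{d_2}$, and hence to $X_2$. This proves the main assertion; the ``in particular'' clause is then immediate, since if $X_1$ is rational then so is $X_1 \times \bP^d$, and hence so is $X_2$. I do not anticipate any serious obstacle here: the substance of the statement is already packaged inside lemma~\ref{Sol}, and the only points requiring any care are the dimension bookkeeping (to see that the exponents match up as $d = d_2 - d_1$) and the invocation of the birational isomorphism $\bP^a \times \bP^b \dashrightarrow \bP^{a+b}$.
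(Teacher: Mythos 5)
Your proposal is correct and follows exactly the paper's own argument: apply lemma~\ref{Sol} to each action to get $X_i$ birational to $(X_i/M_i)\times\bP^{d_i}$, then use the common quotient and the dimension count $d=d_2-d_1$ to conclude. The only difference is that you spell out the bookkeeping and the birational identification $\bP^{d_1}\times\bP^{d_2-d_1}\dashrightarrow\bP^{d_2}$, which the paper leaves implicit.
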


\begin{proof}
By lemma~\ref{Sol},
$X_i$ is birational to
$(X_i/M_i) \times \bP^{d_i}$
for some $d_i \geqslant 0$.
Since
$X_1/M_1$ is birational to $X_2/M_2$
by assumption,
it follows that
$d = d_2 - d_1$
and that
$X_2$ is birational to $X_1 \times \bP^d$.
\end{proof}

%
%
%
%


\section{Quotients by connected subgroups of maximal rank}

We continue to work over
an algebraically closed base field $k$,
of arbitrary characteristic
unless otherwise stated.
Let $G$ be a connected linear algebraic group.
The maximal semisimple adjoint quotient $\Gbar$ of $G$
decomposes as a direct product
$\Gbar = \Gbar_1 \times \cdots \times \Gbar_\ell$
of connected adjoint almost-simple groups;
we refer to these $\Gbar_i$'s
as the \emph{adjoint factors of $G$}.
Our main result in this section is:

\begin{theorem}
\label{ThA}
Let $G$ be a connected linear algebraic group,
and let $H \subseteq G$ be
a connected closed subgroup
of maximal rank in $G$.
Assume that
each adjoint factor of $G$ is of
type~$A$,
type~$C$ (when $\characteristic(k) \neq 2)$,
or
type~$B_3$ or $G_2$ (when $\characteristic(k) = 0)$.
Then
$G/H$ is a rational variety.
\end{theorem}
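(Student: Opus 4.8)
The plan is to reduce Theorem~\ref{ThA} to the case where $G$ is semisimple adjoint and then to a maximal connected reductive subgroup of maximal rank, and finally to prove rationality factor-by-factor using the structure theory cited in the introduction.

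\begin{proof}[Proof sketch]
First I would reduce to the semisimple case. Applying lemma~\ref{mod radical} with $R = R(G)$ the solvable radical, we see that $G/H$ is birational to $G'/H' \times \bP^s$ where $G' = G/R$ is the maximal semisimple quotient and $H' = H/(H \cap R)$. Since $H$ has maximal rank in $G$, its image $H'$ should again have maximal rank in $G'$, so it suffices to prove rationality of $G'/H'$; that is, we may assume $G$ is semisimple. A further passage to the adjoint quotient (via an isogeny, which is a finite surjective map and hence induces a birational equivalence of the homogeneous spaces after taking images of $H$) reduces us to the case where $G = \Gbar = \Gbar_1 \times \cdots \times \Gbar_\ell$ is a product of adjoint almost-simple groups of the permitted types, and $H$ is a connected subgroup of maximal rank.

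The key structural input is the Borel--de~Siebenthal algorithm (cf.~\cite[Th.~6]{BdS}, \cite[prop.~6.6]{Le}), which classifies the maximal connected reductive subgroups of maximal rank in a connected semisimple group. The plan is to induct: embed $H$ in a \emph{maximal} connected reductive subgroup $M$ of maximal rank containing it, and try to show that $G/M$ is rational and that $M/H$ is rational, then combine. I expect the combination step to proceed via a fibration $G/H \dashrightarrow G/M$ with fibres $M/H$, using a section provided by the special-group or Bruhat-decomposition machinery (cf.~lemmas~\ref{parabolic section}, \ref{Borel-Tits}). By the Borel--de~Siebenthal classification, the maximal subgroups $M$ of maximal rank in type $A$, $C$, $B_3$, $G_2$ are of an explicit and limited form — typically products of smaller classical groups of the same types, or parabolic-like configurations — so induction on dimension (or rank) reduces to finitely many base cases within each allowed type.

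The main obstacle, and the reason for the characteristic and type restrictions, will be establishing rationality of the individual quotients $G/M$ where $G$ is almost-simple and $M$ is a specific maximal-rank reductive subgroup. Here I would exploit that $H$ (and $M$) share a maximal torus $T$ with $G$, so that $G/M$ can be analyzed via the big cell and the centralizer structure of subtori in a reductive group (cf.~\cite[\S28.5, \S22.4, \S26.2]{Hu}); the special-group property (in Serre's sense) for type $A$ and type $C$ groups is what lets one split the relevant torsors and conclude rationality, whereas types $B_3$ and $G_2$ require the characteristic-zero geometric arguments. The restriction to $\characteristic(k) \neq 2$ for type $C$ and $\characteristic(k) = 0$ for $B_3, G_2$ reflects exactly where these splitting and section arguments are available. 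I expect the bookkeeping in the induction — verifying that each maximal-rank subgroup arising from Borel--de~Siebenthal is again built from groups of the permitted types in the permitted characteristics — to be the delicate technical heart of the argument.
\end{proof}
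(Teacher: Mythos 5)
Your overall architecture matches the paper's: reduce via lemma~\ref{mod radical} and isogenies to the almost-simple case, invoke Borel--de~Siebenthal to place $H$ inside a maximal connected subgroup $M$ of maximal rank, split $G/H \dashrightarrow G/M$ using either the Bruhat section (when $M$ is parabolic) or the special-group property (when $M$ is reductive of type $C\times C$), and induct on rank, with $B_3$ and $G_2$ handled by the characteristic-zero low-dimensional machinery. Two points in your sketch, however, need repair.

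First, and most substantively, in the type-$C$ case you attribute the rationality of $G/M$ to the special-group property. That property only yields a rational section of $G \dashrightarrow G/M$, hence the birational splitting $G \sim (G/M)\times M$; this gives \emph{stable} rationality of $G/M$, not rationality, and the paper explicitly remarks that this is exactly the obstruction to removing the type hypotheses altogether. The missing idea is the paper's prop.~\ref{PropB}(d): by Borel--de~Siebenthal (in the form of \cite[Prop.~6.6]{Le}), $M$ is the centralizer of an involution, so $G/M$ is a symmetric variety, hence spherical when $\characteristic(k)\neq 2$; a Borel subgroup $B$ then has a dense orbit in $G/M$, so $G/M$ is birational to a quotient of $B$ and is rational by cor.~\ref{Sol quotient}. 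This is also where the $\characteristic(k)\neq 2$ hypothesis actually enters --- not in the splitting, which works in any characteristic. Relatedly, you should make explicit the fact that in type $A$ the Borel--de~Siebenthal algorithm produces \emph{no} non-parabolic maximal reductive subgroups of maximal rank (all coefficients of the highest root equal $1$), which is why that case closes without any analogue of prop.~\ref{PropB}.

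Second, your justification for descending along the adjoint isogeny ("a finite surjective map hence induces a birational equivalence") is not valid as stated: a finite surjective map of degree $>1$ is not birational, and the base field may have positive characteristic here. The correct argument is that a maximal-rank subgroup contains a maximal torus and hence the centre, so $H'$ is the full preimage of its image $\Hbar$, and $G'/H' \to \Gbar/\Hbar$ is an isomorphism (this is how lemma~\ref{reduction to adjoint factors} proceeds, using the bijection of maximal-rank subgroups under isogeny from \cite[Exp.~XII Cor.~7.12]{SGA3-2}). With these two repairs your outline becomes the paper's proof.
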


We prove this in~\ref{ThAproof};
the essential case is when
$G$ is simply-connected
and almost-simple.
To reduce to this case,
let $G_i$ denote
the simply-connected cover of
the adjoint factor $\Gbar_i$,
let $\Hbar_i$ denote
the image of $H$ in $\Gbar_i$,
and let $H_i$ denote
the preimage of $\Hbar_i$ in $G_i$.

\begin{lemma}
\label{reduction to adjoint factors}
With the above notation,
each $H_i$
is a subgroup of maximal rank in $G_i$,
and
$G/H$ is birational to
$(G_1/H_1) \times \cdots \times (G_\ell/H_\ell) \times \bP^s$
for some $s \leqslant \dim R$,
where $R := R(G)$ is
the solvable radical of $G$.
In particular,
if each $G_i/H_i$ is rational
then so is $G/H$.
\end{lemma}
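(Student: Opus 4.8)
The plan is to peel off the solvable radical, then pass through the two central isogenies $G' \to \Gbar$ and $\Gtilde := G_1 \times \cdots \times G_\ell \to \Gbar$, using the maximal-rank hypothesis to guarantee that these isogenies induce honest isomorphisms of the relevant homogeneous varieties in every characteristic. First I would apply lemma~\ref{mod radical} (equivalently lemma~\ref{Sol} to the left $R$-action on $G/H$): writing $G' := G/R$ and $H' := HR/R$, one gets that $G/H$ is birational to $(G'/H') \times \bP^s$ with $s \leqslant \dim R$, and $G'/H' \cong G/HR$. Here $G'$ is semisimple but not yet adjoint. The image $H'$ of $H$ is still of maximal rank: a maximal torus $T \subseteq H$ maps onto a maximal torus $T' \subseteq G'$, so $H' \supseteq T'$.

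The observation that makes the isogenies harmless is that, because $H'$ contains a maximal torus $T'$ of $G'$, it contains the (scheme-theoretic) center $Z(G') \subseteq T'$. Hence for the adjoint quotient $q : G' \to \Gbar = G'/Z(G')$, the subgroup $H'$ is the full preimage $q^{-1}(\Hbar)$ of its image $\Hbar := q(H')$, and so $G'/H' \cong \Gbar/\Hbar$. The same reasoning will apply to the simply-connected cover below: these are \emph{central} isogenies (kernel of multiplicative type, contained in every maximal torus), so they restrict to isomorphisms on root subgroups and the preimage of a connected maximal-rank subgroup is again smooth of the same dimension; together with the containment of the kernel in $H'$ this makes the induced map of homogeneous varieties an isomorphism.

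The structural heart is to show that $\Hbar = \Hbar_1 \times \cdots \times \Hbar_\ell$ inside $\Gbar = \Gbar_1 \times \cdots \times \Gbar_\ell$, where $\Hbar_i$ is the projection of $\Hbar$ (the image of $H$) to $\Gbar_i$. Here I would use that $\Hbar$ is connected of maximal rank, hence contains a maximal torus $\prod_i T_i$ and is directly spanned by it together with the root subgroups $U_\alpha$ it contains (the standard structure theory of connected subgroups containing a maximal torus; cf.~\cite{Bo}). Since the root system of $\Gbar$ is the orthogonal disjoint union $\coprod_i \Phi_i$ and each $U_\alpha$ with $\alpha \in \Phi_i$ lies in the factor $\Gbar_i$, the set of roots occurring splits factorwise, giving $\Hbar = \prod_i \Hbar_i$ with each $\Hbar_i = T_i \cdot \prod_{\alpha \in \Phi_i} U_\alpha$ of maximal rank in $\Gbar_i$. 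This is the step I expect to be the main obstacle, chiefly in making the factorization clean in arbitrary characteristic, where one must argue with root subgroups rather than Lie algebras.

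Finally I would lift through the simply-connected cover $\pi = \prod_i \pi_i : \Gtilde \to \Gbar$, again a central isogeny. As $\Hbar = \prod_i \Hbar_i$ and $H_i := \pi_i^{-1}(\Hbar_i)$, the preimage is $\pi^{-1}(\Hbar) = \prod_i H_i$ (reduced, by the centrality remark), each $H_i$ is connected of maximal rank in $G_i$ since it contains $\pi_i^{-1}(T_i)$, a maximal torus of $G_i$, and $\prod_i (G_i/H_i) = \Gtilde/\pi^{-1}(\Hbar) \cong \Gbar/\Hbar$. Stringing these identifications together yields that $G/H$ is birational to $(G_1/H_1) \times \cdots \times (G_\ell/H_\ell) \times \bP^s$ with $s \leqslant \dim R$; since $\bP^s$ and finite products of rational varieties are rational, rationality of each $G_i/H_i$ forces rationality of $G/H$.
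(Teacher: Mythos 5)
Your proposal is correct and follows essentially the same route as the paper: peel off the radical via lemma~\ref{mod radical}, use that a maximal-rank subgroup contains the centre to pass through the isogenies $G' \to \Gbar$ and $\Gtilde \to \Gbar$ without changing the homogeneous variety, and then split factorwise. The only cosmetic differences are that you argue the isogeny and factorization steps directly (via $Z(G') \subseteq T'$ and root subgroups) where the paper cites \cite{SGA3-2} and \cite{BdS}/\cite{Le}, and that you factor $\Hbar$ at the adjoint level before lifting rather than decomposing $\Htilde$ at the simply-connected level.
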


\begin{proof}
Let $G' := G/R$ be
the maximal semisimple quotient of $G$.
The image $H'$ of $H$ in $G'$
is a subgroup of maximal rank in $G'$.
By lemma~\ref{mod radical},
$G/H$ is birational to
$(G'/H') \times \bP^s$
for some $s \leqslant \dim R$.

Next,
let $Z(G')$ be the centre of $G'$,
let $\Gbar := G'/Z(G')$ be
the adjoint quotient of $G'$,
and consider the quotient isogeny
$q : G' \to \Gbar$.
The map
$\Hbar \mapsto H' := q^{-1}(\Hbar)$
is a bijection between
the collection of
connected subgroups of maximal rank of $\Gbar$
and
the collection of
connected subgroups of maximal rank of $G'$
(cf.~\cite[Exp.~XII Cor.~7.12]{SGA3-2}).
Since $H'$ contains $Z(G')$,
one obtains an isomorphism between
$G'/H'$ with $\Gbar/\Hbar$.
Applying the same argument
to the quotient isogeny
$\Gtilde \to \Gbar$
from the simply-connected cover $\Gtilde$ of $\Gbar$,
one obtains an isomorphism between
$\Gtilde/\Htilde$ with $\Gbar/\Hbar$,
where $\Htilde$ is
the preimage of $\Hbar$ in $\Gtilde$.

In our notation above,
we can thus write $\Gtilde$ as the direct product
$\Gtilde = G_1 \times \cdots \times G_\ell$;
the subgroup $\Htilde$,
which is of maximal rank in $\Gtilde$,
is then of the form
$\Htilde = H_1 \times \cdots \times H_\ell$,
and each $H_i$
is a subgroup of maximal rank in $G_i$
(cf.~\cite[\S3]{BdS},
or the algebraic group version in~\cite[Prop.~4.1]{Le}).
Thus $\Gtilde/\Htilde$ is isomorphic to
$(G_1/H_1) \times \cdots \times (G_\ell/H_\ell)$.
The lemma follows.
\end{proof}

To proceed further,
let us first review
some preliminary results pertaining to
the rationality of $G/H$
in the greater generality
when $G$ is a connected reductive group
and $H \subseteq G$ is
any closed (connected) subgroup.

\begin{lemma}
\label{parabolic section}
Let $G$ be a connected reductive group,
and let $P \subseteq G$ be
a parabolic subgroup.
Then
$G/P$ is rational,
and the rational quotient map
$G \dasharrow G/P$
admits a rational section.
\end{lemma}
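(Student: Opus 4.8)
The plan is to exploit the Bruhat decomposition, concretely the geometry of the big cell of $G/P$. First I would fix a Borel subgroup $B \subseteq P$ and a maximal torus $T \subseteq B$, and consider the parabolic subgroup $P^-$ opposite to $P$ with respect to $T$ (so that $L := P \cap P^-$ is a common Levi subgroup of $P$ and $P^-$). Write $U_P^-$ for the unipotent radical of $P^-$. The key structural input is that the multiplication map
\[
  U_P^- \times P \longrightarrow G, \qquad (u,p) \mapsto u\cdotp p,
\]
is an open immersion whose image $\Omega := U_P^-\cdotp P$ is a dense open subset of $G$ (the \emph{big cell}); this is standard structure theory of parabolic subgroups and their opposites, valid in arbitrary characteristic.

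Granting this, the composite $U_P^- \hookrightarrow G \xrightarrow{\pi} G/P$ is an open immersion onto the dense open subset $\pi(\Omega) \subseteq G/P$. Since $U_P^-$, being the unipotent radical of a parabolic subgroup of a reductive group, admits a filtration with successive quotients isomorphic to $\Ga$ and is therefore isomorphic \emph{as a variety} to an affine space $\bA^N$ with $N := \dim U_P^-$, it follows at once that $G/P$ is rational.

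For the rational section, I would simply invert this open immersion. Identifying the dense open $\pi(\Omega)$ with $U_P^-$ and composing with the inclusion $U_P^- \hookrightarrow G$ yields a rational map $s : G/P \dashrightarrow G$, and by construction $\pi \circ s$ restricts to the identity on $\pi(\Omega)$, so $s$ is a rational section of $\pi$, as required. Here I use that $G/P$ is the geometric, hence also the rational, quotient of $G$ by the right multiplication action of $P$, so that $\pi$ is the genuine quotient morphism and $k(G/P) = k(G)^P$.

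The main obstacle is purely the verification of the big-cell statement — that $U_P^- \times P \to G$ is an open immersion with dense image — which is where all the structural content sits; everything afterwards is formal. I would cite the standard references for the opposite parabolic and the open cell, or, in the special case $P = B$, note that $U_B^- = U^-$ and that the big cell $U^-\cdotp B$ recovers the classical open Bruhat cell, so that $G/B$ contains the open affine cell $U^- \cong \bA^N$ directly, giving both the rationality of $G/B$ and the section in that case.
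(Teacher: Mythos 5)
Your proposal is correct and follows essentially the same route as the paper: the paper's $V_I^-$ (the product of the root subgroups for negative roots outside the span of $I$) is exactly your $U_P^- = R_u(P^-)$, and both arguments run the big cell $U_P^-\cdotp P \subseteq G$ through $P$-equivariance to get the birational identification $U_P^- \cong$ (open dense subset of) $G/P$ and then invert it for the section. The only cosmetic difference is that you invoke the opposite parabolic and the open-immersion form of the big-cell statement, whereas the paper only extracts birationality of $V_I^- \times P \dasharrow G$ from the Bruhat decomposition.
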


\begin{proof}
This is a standard consequence of
the Bruhat decomposition;
we give a detailed proof here
for the sake of clarity.

Let $B = T \cdot U \subseteq G$ be
a Borel subgroup
contained in $P$,
with maximal torus $T$
and unipotent radical $U$;
write $B^- = U^- \cdot T$
for the opposite Borel subgroup.
The parabolic subgroup $P$
is then the standard parabolic subgroup $P_I$
associated to a subset $I$
of the set of simple roots of $G$
relative to $T$.
By the Bruhat decomposition,
$G$ contains
the Zariski-dense open subset
(big cell)
$U^-\cdot T \cdot U$,
and
$P$ contains
the Zariski-dense open subset
$U_I^- \cdot T \cdot U$,
where $U_I^-$ denote
the subgroup of $U^-$ generated by
the 1-dimensional unipotent subgroups
associated to
the negative roots belonging to
the sub-root lattice spanned by $I$
(cf.~\cite[\S 30.1]{Hu}).
Write $V_I^-$ for the product variety of
the 1-dimensional unipotent subgroups
associated to
the negative roots in the complement of
the sub-root lattice spanned by $I$;
thus $V_I^-$ is a rational variety,
and one has
$U^- \cong V_I^- \times U_I^-$
as varieties.
Then the Bruhat decomposition above
shows that
the natural multiplication map
\[
  f
  \ :\
  V_I^- \times P
  \dasharrow
  G
  ,
  \qquad
  (v,p)
  \mapsto
  v \cdotp p
\]
is a birational map.
With respect to
the trivial action on $V_I^-$
and the natural right action
of $P$ on $P$ and on $G$,
the map $f$ is $P$-equivariant.
Thus
it induces the birational map
$V_I^- \dasharrow G/P$,
showing that $G/P$ is rational.
The inverse birational map
yields the desired rational section of
$G \dasharrow G/P$.
\end{proof}

\begin{corollary}
\label{subparabolic}
Let $G$ be a connected reductive group,
and let $H \subseteq G$ be any closed subgroup
contained in a parabolic subgroup $P$ of $G$.
Then
$G/H$ is birational to $(G/P) \times (P/H)$.
In particular,
if $P/H$ is rational,
then so is $G/H$.
\end{corollary}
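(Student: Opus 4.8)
The plan is to deduce Corollary~\ref{subparabolic} directly from Lemma~\ref{parabolic section} by the same splitting mechanism used in Corollary~\ref{solv split}, now applied to a parabolic $P$ rather than to a Borel $B$. The key observation is that Lemma~\ref{parabolic section} supplies not merely the rationality of $G/P$, but the extra datum of a rational section of the quotient map $\pi : G \dasharrow G/P$. This is precisely the hypothesis needed to invoke the general splitting principle of Lemma~\ref{spQ}.

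First I would verify that Lemma~\ref{spQ} applies to the right-multiplication action of $P$ on $G$. The action of $P$ on $G$ by right translation is generically free (indeed free), since $G$ is a group and $P \subseteq G$ is a closed subgroup; this is exactly the ``easy example'' of a generically free action noted in the text before Lemma~\ref{Demazure generic freeness}. The rational quotient of $G$ by this action is $G/P$ by definition. By Lemma~\ref{parabolic section}, the quotient map $\pi : G \dasharrow G/P$ admits a rational section $s : G/P \dasharrow G$. Thus both hypotheses of Lemma~\ref{spQ}, with $M := P$ and $X := G$, are satisfied.

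Next I would read off the conclusion. Taking $H \subseteq P$ to be the given closed subgroup in the ``In particular'' clause of Lemma~\ref{spQ} (with $M = P$), that lemma yields that $G/H$ is birational to $(G/P) \times (P/H)$, which is the first assertion of the corollary. The final claim then follows immediately: $G/P$ is rational by Lemma~\ref{parabolic section}, so if $P/H$ is rational then $(G/P) \times (P/H)$ is rational, being a product of rational varieties, and hence $G/H$ is rational.

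I do not expect any serious obstacle here, as this corollary is a formal consequence of the two preceding results and is entirely parallel to Corollary~\ref{solv split}. The only point requiring minor care is to confirm that the genericity and section hypotheses of Lemma~\ref{spQ} are met in the parabolic setting, but both are guaranteed verbatim by Lemma~\ref{parabolic section} and by the freeness of right translation. The proof is therefore essentially a one-line citation of Lemma~\ref{spQ} combined with Lemma~\ref{parabolic section}.
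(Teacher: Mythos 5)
Your proposal is correct and follows exactly the paper's own argument: both invoke Lemma~\ref{spQ} with $M = P$ and $X = G$, using the generic freeness of right translation and the rational section supplied by Lemma~\ref{parabolic section} in place of Rosenlicht's section theorem, then deduce the rationality statement from the rationality of $G/P$. No gaps.
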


\begin{proof}
The first assertion is deduced from lemma~\ref{spQ}
by the same argument as in
the proof of cor.~\ref{solv split},
using lemma~\ref{parabolic section} above
instead of
lemma~\ref{Rosenlict generic section}.
The second assertion then follows
by another use of lemma~\ref{parabolic section}.
\end{proof}

\begin{lemma}
[Borel-Tits]
\label{Borel-Tits}
Let $G$ be a connected reductive
(resp.~connected semisimple) group.
Suppose
$H \subseteq G$ is a connected closed subgroup
which is not contained in
any proper parabolic subgroup of $G$.
Then $H$ is reductive
(resp.~semisimple).
\end{lemma}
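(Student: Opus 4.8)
This is the theorem of Borel--Tits, and the plan is to prove the contrapositive: if $H$ is not reductive (resp.\ not semisimple), then $H$ is contained in some proper parabolic subgroup of $G$. Everything will rest on the following geometric assertion, which is the real content of the theorem: \emph{if $U \subseteq G$ is a nontrivial connected unipotent subgroup, then its normalizer $N_G(U)$ lies in a proper parabolic subgroup of $G$}. Granting this for the moment, I first dispose of the two reductions.

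For the reductive case, suppose $H$ is not reductive, so that its unipotent radical $U := R_u(H)$ is a nontrivial connected unipotent subgroup. As $U$ is characteristic in $H$, we have $H \subseteq N_G(U)$, and the key assertion then places $H$ inside a proper parabolic, the desired contradiction. For the semisimple case, the reductive case already shows that $H$ is reductive; if $H$ were not semisimple, its radical $S := R(H) = Z(H)^{\circ}$ would be a nontrivial central torus of $H$, whence $H \subseteq Z_G(S)$. Fixing a nontrivial cocharacter $\lambda : \Gm \to S$ and writing $P(\lambda) := \{\, g \in G : \lim_{t \to 0} \lambda(t)\, g\, \lambda(t)^{-1} \ \text{exists}\,\}$ for the associated parabolic, we have $Z_G(S) \subseteq Z_G(\lambda) \subseteq P(\lambda)$; and $P(\lambda)$ is proper because $G$ is semisimple, so that the nontrivial cocharacter $\lambda$ cannot be central. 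Thus $H \subseteq P(\lambda)$, again a contradiction.

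To prove the key assertion I would pass through geometric invariant theory. Set $u := \dim U$ and let $[w] \in \bP(\wedge^u \Lie(G))$ be the point given by the line $\wedge^u \Lie(U)$ inside $\wedge^u \Lie(G)$, on which $G$ acts through the adjoint representation. Since conjugation by any $g \in N_G(U)$ carries $\Lie(U)$ to itself, we have $N_G(U) \subseteq \mathrm{Stab}_G([w])$. The point $[w]$ is unstable: the connected unipotent group $U$ is contained in the unipotent radical of some Borel subgroup $B$, so $\Lie(U)$ lies in the sum of the positive root spaces, and any regular dominant cocharacter drives $w$ to $0$; hence $0 \in \overline{G \cdot w}$. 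By Kempf's theory of optimal destabilizing one-parameter subgroups (cf.\ \cite{GIT}), valid over an arbitrary field, the unstable point $[w]$ determines a canonical parabolic subgroup $P(\lambda)$ --- attached to an optimal cocharacter $\lambda$ --- with $\mathrm{Stab}_G([w]) \subseteq P(\lambda)$. Combining, $N_G(U) \subseteq \mathrm{Stab}_G([w]) \subseteq P(\lambda)$, and $P(\lambda)$ is proper since the optimal $\lambda$ is a nonzero, non-central cocharacter (as $U \neq 1$, the point $[w]$ is genuinely unstable). This yields the key assertion.

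The main obstacle is exactly this last step. Producing \emph{some} cocharacter that contracts $w$ to $0$ is easy --- any regular dominant cocharacter of a Borel subgroup containing $U$ will serve --- but such a cocharacter is highly non-canonical, and its parabolic has no reason to contain all of $N_G(U)$. What rescues the argument is Kempf's uniqueness: the optimal class of destabilizing one-parameter subgroups of $[w]$ is intrinsic to $[w]$, hence preserved by every element of $\mathrm{Stab}_G([w])$; as the normalizer of this class is precisely $P(\lambda)$, one gets $\mathrm{Stab}_G([w]) \subseteq P(\lambda)$ and therefore $N_G(U) \subseteq P(\lambda)$. It is this canonicity, together with the characteristic-free nature of Kempf's construction, that supplies the proper parabolic we need and keeps the argument uniform in all characteristics. (The full Borel--Tits theorem asserts in addition that $U \subseteq R_u(P(\lambda))$, but that refinement is not needed here.)
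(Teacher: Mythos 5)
Your proof is correct, and its overall skeleton coincides with the paper's: both arguments run the contrapositive, handle the reductive case through the normalizer of $U:=R_u(H)$, and handle the semisimple case by choosing a nontrivial cocharacter $\lambda$ in the connected centre of $H$ and placing $H$ inside the GIT parabolic $P(\lambda)$, which is proper because $G$ is semisimple (the paper phrases this via $\Image(\lambda)$ lying in the radical of $P(\lambda)$; you phrase it via non-centrality of $\lambda$ --- same point). The genuine divergence is in the reductive half: the paper simply cites \cite[\S30.3 Cor.~A]{Hu}, i.e.\ the Borel--Tits normalizer theorem itself, which hands them both $N_G(U)\subseteq P$ and $U\subseteq R_u(P)$, the latter inclusion being their reason that $P$ is proper. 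You instead reprove that key assertion from scratch via Kempf's instability theory, applying the canonicity of the optimal destabilizing class to the unstable point $[\wedge^{u}\Lie(U)]\in\bP(\wedge^{u}\Lie(G))$ and extracting properness from the non-centrality of the optimal cocharacter; this is essentially Kempf's own derivation of Borel--Tits, and your attention to the two subtle points --- that only the \emph{optimal} (not an arbitrary destabilizing) cocharacter yields a parabolic containing all of $N_G(U)$, and that the stabilizer of the line (not just of the vector) preserves the optimal class --- is exactly what makes the argument close. The trade-off is self-containedness and uniformity in all characteristics at the cost of invoking the heavier GIT machinery, versus the paper's one-line citation of a standard structure theorem.
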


\begin{proof}
If $H$ is not reductive,
its unipotent radical
$U := R_u(H)$
is a non-trivial normal subgroup of $H$.
Since $G$ is connected reductive,
there exists
(cf.~\cite[\S30.3 Cor.~A]{Hu})
a parabolic subgroup $P$ of $G$
with $N_G(U) \subseteq P$
and $U \subseteq R_u(P)$.
The first inclusion gives
$H \subseteq P$;
the second inclusion forces
$P$ to be a proper parabolic subgroup,
contradicting our hypothesis on $H$.
Hence $H$ is reductive.

Now suppose $G$ is connected semisimple.
If $H$ is not semisimple,
its center $Z(H)$ is of positive dimension,
and we may choose
a non-trivial 1-parameter subgroup
$\lambda : \Gm \to G$
of $G$
with image in $Z(H)$.
By \cite[Def.~2.3/Prop.~2.6]{GIT},
there is a unique closed subgroup
$P(\lambda) \subseteq G$
characterized by the property that
\[
  \gamma \in P(\lambda)
  \ \iff\
  \lambda(t)
  \,
  \gamma
  \,
  \lambda(t^{-1})
  \quad
  \begin{array}[t]{l}
  \text{has a specialization in $G$}
  \\
  \text{when $t \in \Gm$ specializes to $0$}
  ;
  \end{array}
\]
moreover,
one knows that
$P(\lambda)$ is a parabolic subgroup of $G$,
and that
the image of $\lambda$
is contained in
the solvable radical of $P(\lambda)$.
As $G$ is semisimple,
this last fact
forces $P(\lambda)$ to be
a proper parabolic subgroup of $G$.
But since $H$ centralizes $\lambda$
by construction,
the characterizing property of $P(\lambda)$
shows that
$H \subseteq P(\lambda)$,
contradicting our hypothesis on $H$.
Thus $H$ is semisimple.
\end{proof}

Recall that
an algebraic group $M$ over $k$
is called \emph{special} (in the sense of Serre)
iff $H^1(K,M) = \{1\}$
for every field $K$ containing $k$.
By the classification theorem of
Serre and Grothendieck for special groups
(see for instance~\cite[Th.~5.4]{Re}),
one knows that
a connected semisimple group $M$ is special
if and only if
it is a direct product of
simply-connected almost-simple groups
of type~$A_n$ or~$C_n$
(i.e.~$\SL_{n+1}$ or $\Sp_{2n}$).
Special groups enjoy
the following important property:
if $X$ is an irreducible variety
on which
a special group $M$ acts
generically freely,
the rational quotient map
$X \dasharrow X/M$
admits a rational section
(cf.~\cite[Lemma~5.2 and Prop.~5.3]{Re}).

\begin{proposition}
\label{PropB}
Let 
$G$ be 
%
isomorphic to $\Sp_{2n}$
for some $n \geqslant 2$,
%
and
let $M \subsetneq G$ be
a maximal connected proper subgroup
which is semisimple and of maximal rank.
Then:
\begin{itemize}
\item[(a)]
$M$ is
a product of two simply-connected
almost-simple groups of type~$C$
(i.e.~$M$ is $G$-conjugate to
$\Sp_{2m} \times \Sp_{2n-m}$
for some $0 < m < n$),
and hence it is
a special group;
\item[(b)]
the rational quotient map
$G \dasharrow G/M$
admits a rational section;
\item[(c)]
for any closed subgroup $H \subseteq G$
contained in $M$,
$G/H$ is birational to $(G/M) \times (M/H)$;
\item[(d)]
if $\characteristic(k) \neq 2$,
then
$G/M$ is a rational variety.
\end{itemize}
\end{proposition}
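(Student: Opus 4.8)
The plan is to realize $G/M$ geometrically as an open subvariety of a Grassmannian. Fix a $2n$-dimensional vector space $V$ over $k$ equipped with a non-degenerate alternating form $\omega$, so that $G = \Sp(V,\omega)$. By part~(a), after replacing $M$ by a $G$-conjugate we may assume that $M = \Sp(W) \times \Sp(W^\perp)$, where $W \subseteq V$ is a non-degenerate symplectic subspace of dimension $2m$ and $W^\perp$ is its orthogonal complement (which is then non-degenerate of dimension $2(n-m)$, with $V = W \oplus W^\perp$). The group $G$ acts on the Grassmannian $\mathrm{Gr}(2m, V)$ of $2m$-dimensional subspaces, and the reduced stabilizer of the point $[W]$ is precisely $M$.

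First I would check that the $G$-orbit of $[W]$ is the open subvariety
\[
  U \ :=\ \{\, [W'] \in \mathrm{Gr}(2m, V) \ :\ \omega|_{W'} \text{ is non-degenerate} \,\}.
\]
Non-degeneracy of $\omega|_{W'}$ is the non-vanishing of the Pfaffian of the restricted form, an open condition, so $U$ is open, and dense since $\mathrm{Gr}(2m,V)$ is irreducible. That $G$ acts transitively on $U$ is Witt's extension theorem: any two non-degenerate $2m$-dimensional symplectic subspaces are isometric, and an isometry between them extends to an element of $\Sp(V,\omega)$. Thus the orbit map induces a $G$-equivariant bijective morphism $G/M \to U$.

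Next I would argue that this bijective morphism is in fact an isomorphism, i.e.\ that the orbit map $G \to U$ is separable. On tangent spaces one has $T_{[W]}\mathrm{Gr}(2m,V) \cong \operatorname{Hom}(W, V/W)$, and the differential of the orbit map at the identity sends $X \in \Lie(\Sp(V,\omega))$ to the homomorphism $w \mapsto X(w) \bmod W$. Using the non-degeneracy of $\omega|_W$ one checks that this differential is surjective with kernel exactly $\Lie(M) = \Lie(\Sp(W)) \oplus \Lie(\Sp(W^\perp))$; since $U$ is open in the Grassmannian, surjectivity onto the full Grassmannian tangent space is surjectivity onto $T_{[W]}U$, whence the orbit map is separable and $G/M \cong U$. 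It is here that I expect the only genuine subtlety to reside, and it is the natural role of the hypothesis $\characteristic(k) \neq 2$: one must ensure that the scheme-theoretic stabilizer of $[W]$ is reduced, equivalently that the orbit map is separable, which is the clean behaviour of symplectic geometry away from characteristic~$2$.

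Finally, granting the identification $G/M \cong U$, rationality is immediate: $U$ is a dense open subvariety of the Grassmannian $\mathrm{Gr}(2m, V)$, which is a rational variety, so $k(G/M) = k(U) = k(\mathrm{Gr}(2m,V))$ is a purely transcendental extension of $k$. Hence $G/M$ is rational, as claimed.
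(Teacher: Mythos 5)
Your write-up proves only part~(d) of the proposition and explicitly assumes part~(a) as an input (``by part~(a), after replacing $M$ by a $G$-conjugate\dots''), so as a proof of the statement it has a genuine gap: parts (a), (b) and (c) are not addressed. Part~(a) is the real classification content --- that \emph{every} maximal connected proper semisimple subgroup of maximal rank in $\Sp_{2n}$ is conjugate to some $\Sp_{2m}\times\Sp_{2n-2m}$, and moreover is simply connected. The paper obtains this from the Borel--de~Siebenthal algorithm (deleting a vertex of the extended Dynkin diagram of type $C_n$ whose coefficient in the highest root is prime, and computing $n'_\alpha=1$ to see that $M$ equals its simply-connected cover); simple connectedness is what makes $M$ a \emph{special} group. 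Parts (b) and (c) then follow from the defining property of special groups (a generically free action of a special group admits a rational section of its quotient map) together with lemma~\ref{spQ}. None of this appears in your argument, so you would need to supply it.

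For part~(d) itself your argument is correct and takes a genuinely different route from the paper's. The paper notes (again via Borel--de~Siebenthal) that $M$ is the centralizer of an involution, so that $G/M$ is a symmetric and hence spherical variety; a Borel subgroup $B$ then has a dense orbit, so $G/M$ is birational to a quotient of $B$ and rationality follows from cor.~\ref{Sol quotient}. You instead identify $G/M$ with the open locus $U\subseteq\operatorname{Gr}(2m,V)$ of $2m$-planes on which $\omega$ is non-degenerate: transitivity is Witt's theorem, the reduced stabilizer of $[W]$ is exactly $M$ (any $g$ preserving $W$ preserves $W^\perp$, even when $m=n-m$), and your tangent-space computation --- solving $\omega(w,\psi(u))=-\omega(\phi(w),u)$ for $\psi$ using non-degeneracy of $\omega|_W$ --- shows the orbit map is separable with kernel of the differential equal to $\Lie(M)$. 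This is more concrete and self-contained than the paper's proof, and as a bonus it uses only the non-degeneracy of $\omega|_W$, so it is essentially characteristic-free; the hypothesis $\characteristic(k)\neq 2$ is not what makes your argument work (it enters the paper's proof because there $M$ is realized as the fixed-point set of an involution and $G/M$ as a symmetric variety). Your instinct that the separability check is the one point needing care is right, and you do carry it out.
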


\begin{proof}
By the Borel-de~Siebenthal algorithm
(cf.~\cite[Th.~6]{BdS}
and the algebraic group version
in~\cite[Prop.~6.6]{Le}),
one knows that
the Dynkin diagram of $M$
is obtained from
the extended Dynkin diagram of $G$
by removing
a vertex corresponding to
a simple root $\alpha$ of $G$
whose corresponding coefficient
$n_\alpha$ for the longest root $\alpha_0$ of $G$
is a prime number.
Moreover,
one has the exact sequence
\[
  1
  \rightarrow
  \mu(n'_\alpha)
  \rightarrow
  Z(\Mtilde)
  \rightarrow
  Z(M)
  \rightarrow
  1
  ,
\]
where $\Mtilde$ is
the simply-connected cover of $M$,
$n'_\alpha := \frac{|\alpha|^2}{|\alpha_0|^2} \cdot n_\alpha$,
and
$\mu(n'_\alpha)$ is the group of
roots of unity of order $n'_\alpha$.
Since $G$ is simply-connected
of type~$C$,
one has $n_\alpha = 2$
and $n'_\alpha = 1$,
and the Dynkin diagram of $M$
consists of two connected components
both of type~$C$.
Hence
$M = \Mtilde$ is simply-connected,
and is a product of two simply-connected
almost-simple groups of type~$C$.
Thus $M$ is a special group.
The right-multiplication action of $M$ on $G$
is generically free,
and hence by the property of
$M$ being a special group,
(cf.~\cite[Lemma~5.2 and Prop.~5.3]{Re}),
the rational quotient map
$G \dasharrow G/M$
has a rational section.
Hence by lemma~\ref{spQ},
for any closed subgroup $H$
contained in $M$,
$G/H$ is birational to $(G/M) \times (M/H)$.
This proves parts~(a), (b) and~(c).

For part~(d),
using \cite[Prop.~6.6]{Le} again,
we see that
$M$ is the centralizer of
an element of order $2$
(an involution) in $G$;
therefore,
$G/M$ is a symmetric variety
(which makes sense
since
$\characteristic(k) \neq 2$
by assumption).
It is well-known
(cf.~\cite[Th.~4.2, Cor.~4.3]{Sp})
that
a symmetric variety
is a spherical variety:
the natural left action on $G/M$
by a Borel subgroup $B$ of $G$
gives rise to
a Zariski-dense open orbit of $G/M$.
Consequently,
$G/M$ is birational to
a quotient variety of $B$,
and hence by cor.~\ref{Sol quotient},
it is a rational variety.
\end{proof}

\begin{setup}
\begin{proof}
[Proof of theorem~\ref{ThA}]
\label{ThAproof}
We can now establish our main theorem
in this section.
By lemma~\ref{reduction to adjoint factors},
we are reduced to showing that
when $G$ is a connected simply-connected
and almost-simple group of
type~$A$,
type~$C$ (when $\characteristic(k) \neq 2)$,
or
type~$B_3$ or $G_2$ (when $\characteristic(k) = 0)$,
and $H \subsetneq G$ is a connected proper closed subgroup
of maximal rank in $G$,
then $G/H$ is rational.
We proceed by induction on
the common rank~$n$
of $G$ and $H$,
the case of $n = 0$
being trivial.
Henceforth assume that $n \geqslant 1$,
and that our conclusion holds
for groups of the stated types
of lower ranks.

Suppose $H$ is contained in some
proper parabolic subgroup $P \subsetneq G$.
By cor.~\ref{subparabolic},
$G/H$ is birational to $(G/P) \times (P/H)$,
and by lemma~\ref{parabolic section},
$G/P$ is rational.
If $G$ is of type~$A$, $C$ or $G_2$
(resp.~type~$B_3$),
the adjoint factors of $P$
are all of type~$A$
(resp.~type~$A_1$ or $C_2$),
and the ranks of these factors
are strictly lower than that of $G$.
By lemma~\ref{reduction to adjoint factors}
applied to $H \subseteq P$
and our induction hypothesis,
we see that
$P/H$ is rational,
and hence
$G/H$ is rational.

If $G$ is of type~$A_n$
for $n \geqslant 1$,
the Borel-de~Siebenthal algorithm
shows that
every connected proper subgroup $H \subsetneq G$
of maximal rank in $G$
is contained in some proper parabolic subgroup of $G$;
our proof of theorem~\ref{ThA}
is therefore complete in this case.

If $G$ is of type~$C_n$
for $n \geqslant 2$
and $\characteristic(k) \neq 2$,
we are reduced to the case when
$H \subsetneq G$ is not contained in
any proper parabolic subgroup of $G$
and is therefore semisimple
by lemma~\ref{Borel-Tits}.
We let $M \subsetneq G$ be
a maximal connected proper subgroup
containing $H$;
thus $M$ is also of maximal rank in $G$,
and is not contained in
any proper parabolic subgroup of $G$,
and by lemma~\ref{Borel-Tits},
$M$ is semisimple.
By prop.~\ref{PropB},
$G/H$ is birational to $(G/M) \times (M/H)$,
and
$G/M$ is rational
(because $\characteristic(k) \neq 2$);
moreover,
the adjoint factors of $M$
are all of type~$C$,
and the ranks of these factors
are strictly lower than that of $G$.
By lemma~\ref{reduction to adjoint factors}
applied to $H \subseteq M$
and our induction hypothesis,
we see that
$M/H$ is rational,
and hence
$G/H$ is rational.

In the remaining cases,
$G$ is of type~$B_3$ or $G_2$
with $\characteristic(k) = 0$,
and
$H \subsetneq G$ is not contained in
any proper parabolic subgroup of $G$;
again,
$H$ is semisimple
by lemma~\ref{Borel-Tits}.
The rationality of $G/H$
is then established directly
using results in section~\ref{low dim homog ctd},
and we defer the proof of these cases
to cor.~\ref{B23C3G2}.
The proof of theorem~\ref{ThA}
is thus completed
---
modulo the use of cor.~\ref{B23C3G2}
for the low dimensional cases.
\end{proof}
\end{setup}

\begin{remark}
It is possible that
the assumptions in theorem~\ref{ThA}
on the adjoint factors of $G$
can be removed altogether.
This would be the case
if the assertion of prop.~\ref{PropB}(b)
can be established
for almost-simple groups of any type;
our induction argument in~\ref{ThAproof}
would then yield
the stable-rationality of $G/H$ in general.
In turn,
the rationality of $G/H$ in general
would be reduced to
the assertions of prop.~\ref{PropB}(d)
for almost-simple groups of any type;
i.e.~to the rationality of $G/M$
when $G$ is almost-simple (of any type)
and $M \subsetneq G$ is
a maximal connected proper subgroup
of maximal rank in $G$
but which is not contained in
any proper parabolic subgroup of $G$.
As explained in~\ref{ThAproof},
such an $M$ is semisimple,
and its (finitely many) possibilities
are determined by
the Borel-de~Siebenthal algorithm.
\end{remark}

%
%
%
%


\section{Low dimensional homogeneous varieties}
\label{low dim homog}

From now on,
we work over
an algebraically closed base field $k$
of characteristic~0.
In this and the next section,
we apply geometric methods
to study the rationality problem~\ref{RatPbm}.
Our goal is to establish theorems~\ref{ThB0} and~\ref{ThB}
asserting the rationality of
all homogeneous varieties $G/H$
of sufficiently low dimensions,
thereby answering
the rationality problem~\ref{RatPbm} affirmatively
in these cases.
In this section,
we place no restriction on
the connectedness of $H$,
while in section~\ref{low dim homog ctd},
we extend our rationality results further
when $H$ is assumed to be connected.
The following argument
will be used several times
in both sections.

\begin{lemma}
\label{finbir}
Let $H_1 \subseteq H_2$ be
two connected algebraic groups such that $H_2$ (and hence $H_1$)
act rationally on an algebraic variety $X$,
and let $f : X/H_1 \dasharrow X/H_2$ be
the dominant rational quotient map.
The following are equivalent:
\begin{itemize}
\item[(a)]
$f$ is birational;
\item[(b)]
$f$ is generically injective;
\item[(c)]
$f$ is generically finite;
\item[(d)]
$\dim(X/H_1) = \dim(X/H_2)$;
\item[(e)]
for all points $x \in X$
in general position,
its orbits $x \cdotp H_1$ and $x \cdotp H_2$
under the action of $H_1$ and $H_2$
have the same Zariski closure.
\end{itemize}
\end{lemma}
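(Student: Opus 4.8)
The plan is to translate the statement into field theory and then prove the five conditions equivalent by a single cyclic chain of implications. Writing $k(X/H_i) = k(X)^{H_i}$, the inclusion $H_1 \subseteq H_2$ gives $k(X)^{H_2} \subseteq k(X)^{H_1} \subseteq k(X)$, and $f$ is exactly the dominant rational map induced by the first inclusion; thus $f$ is birational precisely when $k(X)^{H_1} = k(X)^{H_2}$. I would establish (a)$\Rightarrow$(b)$\Rightarrow$(c)$\Rightarrow$(d)$\Rightarrow$(e)$\Rightarrow$(a). The first two implications are immediate: a birational map is generically injective, and a generically injective dominant map has finite (indeed singleton) general fibres, hence is generically finite.

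For (c)$\Leftrightarrow$(d), I would use the standard fact that a dominant rational map of irreducible varieties is generically finite if and only if its source and target have equal dimension (the general fibre has dimension $\dim(X/H_1) - \dim(X/H_2)$). To connect this with orbits, I invoke Rosenlicht's description of the rational quotient: over a suitable open dense subset a general fibre of $X \dasharrow X/H_i$ is a single $H_i$-orbit, so $\dim(X/H_i) = \dim X - d_i$, where $d_i$ denotes the dimension of a general $H_i$-orbit. Hence (d) is equivalent to the equality $d_1 = d_2$ of general orbit dimensions.

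The heart of the argument is (d)$\Rightarrow$(e). Here I would use connectedness crucially: for $x$ in general position, both $x \cdotp H_1$ and $x \cdotp H_2$ are images of the connected groups $H_i$ under the orbit map, hence irreducible, and $\overline{x \cdotp H_1} \subseteq \overline{x \cdotp H_2}$ since $H_1 \subseteq H_2$. When $d_1 = d_2$ these two irreducible closed sets have the same dimension, forcing $\overline{x \cdotp H_1} = \overline{x \cdotp H_2}$, which is (e). Finally, for (e)$\Rightarrow$(a) I argue field-theoretically: any $\varphi \in k(X)^{H_1}$ is constant on the general orbit $x \cdotp H_1$, which by (e) is dense in $\overline{x \cdotp H_2}$; by continuity $\varphi$ is then constant on $x \cdotp H_2$ as well, so $\varphi \in k(X)^{H_2}$. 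This yields $k(X)^{H_1} = k(X)^{H_2}$ and hence the birationality of $f$.

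The main obstacle to anticipate is precisely the implication (c)$\Rightarrow$(a): a generically finite dominant map need not be birational, so finiteness of the general fibres of $f$ alone is insufficient. The connectedness of $H_1$ and $H_2$ is exactly what rescues the argument, since it makes the general orbits---and hence the general fibres of $f$, which are the $H_1$-orbits contained in a fixed irreducible $H_2$-orbit---irreducible; an irreducible finite set is a single point. Routing the proof through (d) and (e) rather than attempting (c)$\Rightarrow$(a) directly keeps this use of connectedness transparent and, as a bonus, avoids any appeal to separability, so that the equivalence in fact holds in arbitrary characteristic.
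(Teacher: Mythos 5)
Your proof is correct, and for most of its length it runs along the same track as the paper's: the implications (a)$\Rightarrow$(b)$\Rightarrow$(c)$\Rightarrow$(d) are dispatched as formalities, and (d)$\Rightarrow$(e) is obtained exactly as in the paper, by comparing the two irreducible orbit closures $\overline{x \cdotp H_1} \subseteq \overline{x \cdotp H_2}$ of equal dimension $\dim X - \dim(X/H_i)$ (both arguments rely here on Rosenlicht's theorem that general fibres of $X \dasharrow X/H_i$ are orbits). The genuine divergence is in how the loop is closed. The paper goes (e)$\Rightarrow$(b) by identifying the fibre of $f$ over $x \cdotp H_2$ with the set of $H_1$-orbits sharing that closure, and then needs a separate implication (b)$\Rightarrow$(a), for which it invokes the fact that a dominant injective morphism induces a purely inseparable extension of function fields --- this is precisely where $\characteristic(k)=0$ enters. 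You instead prove (e)$\Rightarrow$(a) directly on the level of invariant fields: a function $\varphi \in k(X)^{H_1}$ is constant on general $H_1$-orbits, hence on their closures, hence on general $H_2$-orbits, hence lies in $k(X)^{H_2}$. That last "hence" deserves one more line --- either appeal to the universal property of the rational quotient (a rational function constant on general $H_2$-orbits factors through $X/H_2$), or note that the stabilizer of $\varphi$ in $H_2$ is a closed subgroup containing a dense subset of the connected group $H_2$ --- but it is sound. Your route buys something real: the equivalence, and in particular the otherwise delicate implication (b)$\Rightarrow$(a), comes out characteristic-free, whereas the paper's version is stated and proved only in characteristic $0$ (which is all it needs for the applications in sections 4 and 5).
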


\begin{proof}
The implications
$\text{(a)}
 \Rightarrow
 \text{(b)}
 \Rightarrow
 \text{(c)}
 \Rightarrow
 \text{(d)}
$
are clear.
To show the other implications,
we replace $X$ by another birational model
and assume that
both $H_1 \subseteq H_2$ act regularly on $X$
(cf.~\cite[Th.~1]{Ro56}).
For a point $x \in X$ in general position,
its orbit $x \cdotp H_i$
under the action of $H_i$
is an irreducible locally closed subvariety in $X$
of dimension
$\dim(x \cdotp H_i) = \dim X - \dim(X/H_i)$.
If $\dim(X/H_1) = \dim(X/H_2)$,
these orbits are of the same dimension,
and since we have the inclusion
$x \cdotp H_1 \subseteq x \cdotp H_2$,
these orbits have the same Zariski closure in $X$;
hence
$\text{(d)} \Rightarrow \text{(e)}$.
The fiber of $f$ over
the point $x \cdotp H_2$ in $X / H_2$
consists of
those points $x' \cdotp H_1$ in $X / H_1$
which, when regarded as orbits in $X$,
belong to the same Zariski closure
as $x \cdotp H_2$ in $X$;
hence
$\text{(e)} \Rightarrow \text{(b)}$.
Finally, recall that
(cf.~\cite[\S 4.6, Th.]{Hu})
a dominant injective morphism between
irreducible varieties
induces a finite purely inseparable extension
of their function fields.
As our base field $k$
is of characteristic~0,
we infer that
when $f$ is generically injective,
it induces a trivial extension of function fields
and $f$ is therefore birational;
hence $\text{(b)} \Rightarrow \text{(a)}$.
\end{proof}

Recall that
a unirational curve over any field
is rational
by L\"uroth's theorem,
and a unirational surface over
an algebraically closed field of characteristic~0
is rational
by Castelnuovo's rationality criterion.
Hence
over our algebraically closed base field $k$
of characteristic~0,
a unirational variety is rational
if its dimension is~$\leqslant 2$.

\begin{lemma}
\label{dim1or2}
Let $G$ be a connected linear algebraic group,
and let $B \subseteq G$ be
a Borel subgroup of $G$.
For any closed subgroup $H \subseteq G$,
if $\dim(B \backslash G/H) \leqslant 2$,
then $G/H$ is rational.
\end{lemma}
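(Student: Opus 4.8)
The plan is to exploit the smallness of the double-coset space $B \backslash G / H$ together with the fact that quotienting $G/H$ by a Borel subgroup only strips off a rational (projective-space) factor. Concretely, I would regard $G/H$ as a variety on which the Borel subgroup $B$ acts rationally by left multiplication; since $B$ is connected solvable, lemma~\ref{Sol} applies to this action and yields a birational equivalence
\[
  G/H
  \ \sim\
  \bigl( B \backslash (G/H) \bigr) \times \bP^d
\]
for some $d \leqslant \dim B$. By the description of double cosets as rational quotients recorded in the introductory setup, the first factor $B \backslash (G/H)$ is exactly $B \backslash G / H$. Hence everything reduces to proving that $B \backslash G / H$ is a rational variety, for then the displayed product, and therefore $G/H$, is rational.

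To prove that $B \backslash G / H$ is rational, I would first observe that it is unirational. Indeed, the composite of the dominant rational maps $G \dashrightarrow G/H \dashrightarrow B \backslash G / H$ induces inclusions of function fields
\[
  k\bigl( B \backslash G / H \bigr)
  \ \subseteq\
  k(G/H)
  \ \subseteq\
  k(G)
  ,
\]
and $k(G)$ is a purely transcendental extension of $k$ because the connected group $G$ is rational (by the Chevalley--Rosenlicht theorem recalled in the introduction). Thus $k(B \backslash G / H)$ is contained in a purely transcendental extension of $k$, i.e.\ $B \backslash G / H$ is unirational. Since $\dim(B \backslash G / H) \leqslant 2$ by hypothesis and $\characteristic(k) = 0$, the theorems of L\"uroth and Castelnuovo recalled just above force $B \backslash G / H$ to be rational, completing the argument.

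I do not expect a genuine obstacle here; the argument is short, and the only points needing care are bookkeeping ones. The first is to confirm that lemma~\ref{Sol}, which is phrased for a (right) rational action, applies equally to the left-multiplication action of $B$ on $G/H$ and that its rational quotient is identified with the double-coset space $B \backslash G / H$ --- but this is the same left-action usage already exploited in the proof of lemma~\ref{mod radical}, and the identification is precisely the definition recalled in the setup. The second is the propagation of unirationality along the two dominant maps, which is immediate from the displayed chain of function-field inclusions. In short, the low-dimensional L\"uroth--Castelnuovo rationality is applied not to $G/H$ itself (which may be of arbitrarily large dimension) but to the small quotient $B \backslash G / H$, and lemma~\ref{Sol} transports the resulting rationality back up to $G/H$.
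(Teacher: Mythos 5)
Your argument is correct and is essentially identical to the paper's own proof: both establish that $B \backslash G/H$ is unirational because it is dominated by the rational variety $G$, invoke L\"uroth--Castelnuovo in dimension~$\leqslant 2$ to conclude it is rational, and then apply lemma~\ref{Sol} to the left action of $B$ on $G/H$ to transport rationality back up to $G/H$. No further comment is needed.
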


\begin{proof}
The underlying variety of $G$
is rational
(cf.~\cite{Ch});
the space of double cosets
$B \backslash G/H$,
being dominated by $G$,
is therefore
a unirational variety.
Hence
our hypothesis on its dimension
implies that
$B \backslash G/H$ is rational.
By lemma~\ref{Sol}
applied to the left action of $B$ on $G/H$,
it now follows that
$G/H$ is rational.
\end{proof}

\begin{lemma}
\label{TB3}
Let $G$ be a connected semisimple group
with maximal torus $T$,
and let $B \subseteq G$ be
a Borel subgroup of $G$
containing $T$.
For any closed subgroup $H \subseteq G$,
one has
$\dim(B \backslash G/H) \leqslant \dim(T \backslash G/H)$;
and if equality holds,
then $G/H$ is rational.
\end{lemma}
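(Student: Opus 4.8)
The plan is to analyze the two assertions through the left actions of $T$ and of $B$ on the variety $X := G/H$, reducing everything to a comparison of the two quotients via lemma~\ref{finbir} and lemma~\ref{Sol}. For the inequality, I would simply note that the inclusion $T \subseteq B$ induces a dominant rational map $T \backslash G/H \dasharrow B \backslash G/H$, obtained by sending each left $T$-orbit to the left $B$-orbit containing it. A dominant rational map cannot raise dimension, so $\dim(B \backslash G/H) \leqslant \dim(T \backslash G/H)$; this is the trivial direction, and the equality statement is exactly condition~(d) of lemma~\ref{finbir}.

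For the equality case, I would apply lemma~\ref{finbir} to the chain of connected groups $T \subseteq B$ acting on $X = G/H$ from the left. The hypothesis $\dim(X/T) = \dim(X/B)$ is condition~(d), so lemma~\ref{finbir} gives condition~(a): the quotient map $f : T \backslash G/H \dasharrow B \backslash G/H$ is birational. Since $T$ is a torus, hence connected solvable, lemma~\ref{Sol} applied to the left $T$-action on $G/H$ yields $G/H$ birational to $(T \backslash G/H) \times \bP^{d}$ for some $d \geqslant 0$. Thus $G/H$ is rational as soon as $T \backslash G/H$ is, and since $f$ is birational we are reduced to proving that $B \backslash G/H$ is rational.

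The hard part will be this last step, because what comes for free is only unirationality: the flag variety $B \backslash G$ is rational by lemma~\ref{parabolic section} and dominates $B \backslash G/H$, but in positive dimension unirationality does not force rationality. To upgrade it I would exploit the geometric content of the equality, namely condition~(e) of lemma~\ref{finbir}: for $x \in G/H$ in general position the left orbits satisfy $\overline{T \cdotp x} = \overline{B \cdotp x}$. Writing $B = T \cdot U$ with $U$ the unipotent radical, this says that, generically, $U$ enlarges the torus orbit only within its own closure. I would then use the geometry of the big cell and the structure of centralizers of subtori (cf.~\cite[\S28.5, \S22.4, \S26.2]{Hu}): slicing $G$ along a rational section of $G \dasharrow G/B$ and tracking the residual torus action, one aims to identify $B \backslash G/H$ birationally with the quotient of a rational variety by a connected solvable group, so that lemma~\ref{Sol} applies (or lemma~\ref{dim1or2}, once the dimension has dropped to $\leqslant 2$). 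Producing this explicit model, and in particular controlling the residual solvable action so that rationality becomes visible, is the delicate point; the torus-versus-Borel comparison of the first two paragraphs is what makes it tractable, since it replaces the unwieldy Borel quotient by the more rigid torus quotient $T \backslash G/H$.
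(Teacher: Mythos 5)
Your first two paragraphs are correct and agree with the paper: the inequality comes from the dominant rational map induced by $T \subseteq B$, and the equality hypothesis is condition~(d) of lemma~\ref{finbir}, which together with lemma~\ref{Sol} reduces the problem to the rationality of $T\backslash G/H$ (birational to $B\backslash G/H$). But your third paragraph, which is where all the content lies, is not a proof. You correctly single out condition~(e) of lemma~\ref{finbir} --- $\overline{T\cdotp x}=\overline{B\cdotp x}$ for $x$ in general position --- and the big cell as the relevant tools, but the plan of ``identifying $B\backslash G/H$ birationally with the quotient of a rational variety by a connected solvable group'' is stated as an aim rather than carried out, and you yourself flag the construction of the model as ``the delicate point.'' That is precisely the step requiring an argument, so the proposal has a genuine gap. (The fallback to lemma~\ref{dim1or2} is also a red herring: nothing in the hypotheses forces $\dim(B\backslash G/H)\leqslant 2$.)

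The paper closes this gap with a short argument that bypasses the rationality of $B\backslash G/H$ entirely. For $x\in G/H$ in general position, condition~(e) gives $\overline{B\cdotp x}=\overline{T\cdotp x}$; since the big cell $U^-\cdotp T\cdotp U$ is dense in $G$, the set $U^-\cdotp B\cdotp x$ is dense in $G\cdotp x=G/H$, and therefore
\[
  G/H
  \;=\;
  \overline{U^-\cdotp B\cdotp x}
  \;=\;
  \overline{U^-\cdotp T\cdotp x}
  \;=\;
  \overline{B^-\cdotp x}
  ,
\]
i.e.\ the opposite Borel subgroup $B^-$ already has a dense orbit in $G/H$. Hence $G/H$ is birational to $B^-/B^-_x$, where $B^-_x$ is the stabilizer of $x$ in $B^-$, and this is rational by cor.~\ref{Sol quotient} since $B^-$ is connected and solvable. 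This is the missing idea in your outline: the orbit-closure equality is used not to analyze $B\backslash G/H$, but to collapse the dense $B^-B$-orbit to a dense $B^-$-orbit, after which the solvable-quotient machinery you already cite finishes the proof.
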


\begin{proof}
Let $U$ be the unipotent radical of $B$.
The inclusion of $T$ in $B = T \ltimes U$
induces a dominant rational map
$T \backslash G/H \dasharrow B \backslash G/H$,
so one always has
$\dim(B \backslash G/H) \leqslant \dim(T \backslash G/H)$.
Assume that equality holds.
By lemma~\ref{finbir}
applied to the left action of
$T$ and $B$ on $G/H$,
we see that
for a point $x \in G/H$
in general position,
its orbits
$T \cdotp x$ and $B \cdotp x$
under the action of $T$ and $B$
have the same Zariski closure in $G/H$.
Since $B = U \cdot T$
%
%
%
and since $G$ contains
the Zariski-dense open subset
(big cell)
$U^-\cdotp T \cdotp U$ (cf.~\cite[\S 28.5]{Hu}),
%
%
%
$$G/H = G \cdotp x =
\overline{U^- B \cdotp x} =
\overline{U^- T \cdotp x} =
\overline{B^- \cdotp x} .$$
Hence
$G/H$ is birational to $B^-/B^-_{x}$
where $B^-_{x}$ the stabilizer of $x$ in $B^-$;
and since $B^-$ is connected solvable,
it follows from cor.~\ref{Sol quotient} that
this is rational.
\end{proof}

\begin{theorem}
\label{ThB0}
Let $G$ be a connected linear algebraic group,
and let $H \subseteq G$ be
any closed subgroup.
If $\dim(G/H) \leqslant 5$,
then $G/H$ is a rational variety.
\end{theorem}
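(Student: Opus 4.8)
The plan is to reduce to the case where $G$ is semisimple and then to control the double-coset space $B \backslash G/H$ through the intermediate torus quotient $T \backslash G/H$, feeding the outcome into lemmas~\ref{TB3} and~\ref{dim1or2}. First I would dispose of the trivial range: if $\dim(G/H) \leqslant 2$, then $G/H$ is unirational (being dominated by the rational variety $G$) of dimension $\leqslant 2$, hence rational by L\"uroth and Castelnuovo; so assume $3 \leqslant \dim(G/H) \leqslant 5$. By lemma~\ref{mod radical} applied to the solvable radical, $G/H$ is birational to $(G'/H') \times \bP^s$ with $G'$ the maximal semisimple quotient and $\dim(G'/H') \leqslant 5$, so it suffices to treat $G'/H'$ and I may assume $G$ is semisimple. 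Finally, writing $K := \bigcap_{g} gHg^{-1}$ for the normal core of $H$ in $G$ (a closed normal subgroup of $G$ contained in $H$), the identification $G/H \cong (G/K)/(H/K)$ lets me replace $(G,H)$ by $(G/K, H/K)$; thus I may further assume that the normal core of $H$ is finite, and (discarding the case where $G/H$ is a point) that $H \subsetneq G$.

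The crux is a dimension count for the torus quotient. Fix a maximal torus $T$ inside a Borel $B$ of $G$, and let $n = \dim T$ be the rank. I would examine the left $T$-action on $G/H$: an element $t \in T$ acts trivially on $k(G/H)$ iff $g^{-1} t g \in H$ for generic $g$; since $\{g : g^{-1}tg \in H\}$ is closed and dense, this forces $g^{-1}tg \in H$ for all $g$, i.e.\ $t \in T \cap K$. Hence the kernel $T_0$ of the action equals $T \cap K$ and is finite. By the remark following lemma~\ref{tSS}, $T \backslash G/H$ is birational to $(T/T_0) \backslash G/H$, and since $T/T_0$ acts generically faithfully by construction, lemma~\ref{Demazure generic freeness} makes it act generically freely, giving
\[
  \dim(T \backslash G/H)
  \ =\
  \dim(G/H) - \dim(T/T_0)
  \ =\
  \dim(G/H) - n .
\]
Because $G$ is semisimple of positive rank with $\dim(G/H) \leqslant 5$, one checks that $\dim(G/H) - n \leqslant 3$ in every case: for $n = 1$ one has $\dim G = 3$, so $\dim(G/H) \leqslant 3$ and $\dim(T \backslash G/H) \leqslant 2$; for $n \geqslant 3$ one has $\dim(T \backslash G/H) \leqslant 5 - 3 = 2$; and for $n = 2$ one has $\dim(T \backslash G/H) \leqslant 3$, with equality only when $\dim(G/H) = 5$.

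It then remains to combine this with the two rationality criteria. If $\dim(T \backslash G/H) \leqslant 2$, then $\dim(B \backslash G/H) \leqslant \dim(T \backslash G/H) \leqslant 2$ by lemma~\ref{TB3}, and $G/H$ is rational by lemma~\ref{dim1or2}. Otherwise $\dim(T \backslash G/H) = 3$, which by the computation above forces $n = 2$ and $\dim(G/H) = 5$; here $\dim(B \backslash G/H) \leqslant 3$, so either $\dim(B \backslash G/H) \leqslant 2$ and lemma~\ref{dim1or2} applies, or $\dim(B \backslash G/H) = 3 = \dim(T \backslash G/H)$ and the equality clause of lemma~\ref{TB3} yields rationality directly. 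The step I expect to be most delicate is the identification of the generic $T$-stabilizer with $T \cap K$ and the resulting equality $\dim(T \backslash G/H) = \dim(G/H) - n$: it is precisely this count (resting on semisimplicity via the normal-core reduction together with Demazure's theorem) that pins $\dim(T \backslash G/H)$ down to at most $3$ and makes the single borderline case $n = 2$, $\dim(G/H) = 5$ land exactly in the equality regime of lemma~\ref{TB3}, leaving no residual cases.
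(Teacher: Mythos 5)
Your proposal is correct and follows essentially the same route as the paper's proof: reduce to $G$ semisimple via lemma~\ref{mod radical}, make the action generically faithful (your normal-core quotient is the same device as the paper's replacement of $G$ by its image in $\Aut(k(G/H))$), apply lemma~\ref{Demazure generic freeness} to get $\dim(T\backslash G/H)=\dim(G/H)-\dim T\leqslant 3$, and conclude with lemmas~\ref{dim1or2} and~\ref{TB3}. The only differences are organizational (e.g.\ how the rank~$\leqslant 1$ case is absorbed), not mathematical.
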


\begin{proof}
By lemma~\ref{mod radical},
we may replace $G$
by its maximal semisimple quotient $G'$
and $H$ by its image $H'$ in $G'$;
the rationality of $G'/H'$
implies that of $G/H$,
but the dimension of $G'/H'$
can only be at most that of $G/H$.
Henceforth,
we assume $G$ is semisimple.

Let $T$ be a maximal torus of $G$,
and let $B$ be a Borel subgroup of $G$
containing $T$.
If $\dim T \leqslant 1$,
the semisimple group $G$
is either trivial or of rank~1,
isogenous to $\SL_2$;
in either case,
we see that
\[
  \dim(B \backslash G/H)
  \ \leqslant\
  \dim(B \backslash G)
  \ \leqslant\
  1
  ,
\]
and the rationality of $G/H$
follows from lemma~\ref{dim1or2}.
Henceforth we assume
$\dim T \geqslant 2$.

Replacing $G$ further by
its image in $\Aut(G/H)$
if necessary,
we may also assume that
the natural left action of $G$ on $G/H$
is generically faithful.
Applying lemma~\ref{Demazure generic freeness}
to the left action of $T$ on $G/H$,
we have
$\dim(G/H)
 =
 \dim(T \backslash G/H) + \dim T
$.
This is~$\leqslant 5$
by hypothesis,
so $\dim(T \backslash G/H) \leqslant 3$.
But as one always has
$\dim(B \backslash G/H) \leqslant \dim(T \backslash G/H)$,
this means that
\[
  \text{either}
  \qquad
  \dim(B \backslash G/H) \leqslant 2
  \qquad
  \text{or}
  \qquad
  \dim(B \backslash G/H) = \dim(T \backslash G/H) = 3
  ,
\]
and the rationality of $G/H$
follows from lemma~\ref{dim1or2} and lemma~\ref{TB3}
respectively.
\end{proof}
%

%
%
%
%


\section{Low dimensional homogeneous varieties, continued}
\label{low dim homog ctd}

In this section,
we consider the rationality of $G/H$
when $H \subseteq G$ is
a \emph{connected} closed subgroup.
We still work over
an algebraically closed base field $k$
of characteristic~0.
In the series of lemmas below
leading up to the main theorem~\ref{ThB}
of this section,
we adopt the following
hypotheses and notation.

\begin{setup}
\label{notA}
\quad
Let $G$ be a connected semisimple group,
and let $H \subseteq G$ be
a connected closed subgroup.
We fix once and for all:
\begin{center}
\ \\[-3ex]
\begin{tabular}{l l}
  $U(H)$
  &
  the unipotent radical of $H$,
  \\
  $S$
  &
  a (reductive) Levi subgroup of $H$,
  so that $H = S \ltimes U(H)$;
  \\
  $T_H$
  &
  a maximal torus of $S$ (and hence of $H$),
  \\
  $B_S^{\pm}$
  &
  a pair of opposite Borel subgroups of $S$
  containing $T_H$,
  \\
  $U_S^{\pm}$
  &
  the unipotent radical of the corresponding $B_S^{\pm}$,
  so that $B_S^{\pm} = T_H \ltimes U_S^{\pm}$;
  \\
  $B_H^{\pm}$
  &
  the preimage in $H$ of $B_S^{\pm}$,
  so that $B_H^{\pm} = B_S^{\pm} \ltimes U(H)$,
  \\
  $U_H^{\pm}$
  &
  the preimage in $H$ of $U_S^{\pm}$
  so that $U_H^{\pm} = U_S^{\pm} \ltimes U(H)$.
\end{tabular}
\ \\[1ex]
\end{center}
Here,
$B_H^{\pm}$ are Borel subgroups of $H$,
with unipotent radicals $U_H^{\pm}$,
and $T_H$ is a maximal torus of $H$
contained in $B_H$.
Having fixed these,
we choose:
\begin{center}
\ \\[-3ex]
\begin{tabular}{l l}
  $B = B^+$
  &
  a Borel subgroup of $G$
  containing $B_H$;
  \\
  $T$
  &
  a maximal torus of $B$ (and hence of $G$)
  containing $T_H$;
  \\
  $B^-$
  &
  the opposite Borel subgroup of $G$
  containing $T$,
  such that $B^- \cap B = T$;
  \\
  $U^\pm$
  &
  the unipotent radical of
  the corresponding $B^\pm$,
  so that $B^\pm = T \ltimes U^\pm$.
\end{tabular}
\ \\[1ex]
\end{center}
We also set
\[
  \begin{aligned}
    u(H)
    &
    \ :=\
    \dim U(H)
    ,
    \\
    u_G
    &
    \ :=\
    \dim U = \dim U^-
    ,
    &
    \quad
    t_G
    &
    \ :=\
    \dim T
    \quad
    \text{(the rank of $G$)}
    ,
    \\
    u_H
    &
    \ :=\
    \dim U_H = \dim U_H^-
    ,
    &
    \quad
    t_H
    &
    \ :=\
    \dim T_H
    \quad
    \text{(the rank of $H$)}
    .
  \end{aligned}
\]

\noindent
Thus:
\[
  \begin{aligned}
    \dim S
    &
    \ =\
    \dim T_H + 2\dim U_S
    &
    =\
    &
    t_H + 2(u_H - u(H))
    ,
    \\
    \dim H
    &
    \ =\
    \dim S + \dim U(H)
    &
    =\
    &
    t_H + 2 u_H - u(H)
    ,
    \\
    \dim G
    &
    \ =\
    \dim T + 2\dim U
    &
    =\
    &
    t_G + 2u_G
    ,
  \end{aligned}
\]
and hence
\[
  \dim G/H
  \ =\
  \dim G - \dim H
  \ =\
  (t_G - t_H) + 2(u_G - u_H) + u(H)
  .
  \tag{$*$}
  \label{dimformula}
\]
The subgroup inclusion maps
$U_H \subseteq B_H$,
$B_H \subseteq H$
and
$U^- \subseteq B^-$
induce the dominant rational maps
$\alpha$, $\gamma$ and $\varphi$
between the respective spaces of
double cosets:
\[
  \begin{matrix}
  B^- \backslash G / U_H
  &
  \ \overset{\textstyle\alpha}{\dashrightarrow}\
  &
  B^- \backslash G / B_H
  &
  \ \overset{\textstyle\varphi}{\dashleftarrow}\
  &
  U^- \backslash G / B_H
  \\[-1ex]
  &
  &
  \shortmid
  &
  &
  \\[-1ex]
  &
  &
  \downarrow^{\rlap{$\ \gamma$}}
  &
  &
  \\
  &
  &
  B^- \backslash G / H
  &
  &
  \\
  \end{matrix}
  \tag{$**$}
  \label{dblcosetsdiagram}
\]
We will consider
these rational maps
in the series of lemmas below
leading up to theorem~\ref{ThB}.
\end{setup}

\begin{lemma}
\label{uGH3}
In the situation of~\ref{notA}:
\begin{itemize}
\item[(a)]
$B^- \backslash G/U_H$
is rational,
of dimension~$u_G - u_H$.
\item[(b)]
One has
$\dim(B^- \backslash G/H)
 \leqslant
 \dim(B^- \backslash G/U_H)
$;
and if equality holds,
then $G/H$ is rational.
\item[(c)]
If $u_G - u_H \leqslant 3$,
then $G/H$ is rational.
\end{itemize}
\end{lemma}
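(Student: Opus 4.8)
The plan is to establish the three parts in order, with (b) and (c) built on top of (a).

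For part~(a) I would linearize the double coset space via the big cell. Since $B^- = T \ltimes U^-$ and $G$ contains the Zariski-dense open big cell $U^- \cdot T \cdot U = B^- \cdot U$ (cf.~\cite[\S28.5]{Hu}), the map $U \to B^- \backslash G$, $u \mapsto B^- u$, is birational: it is injective on the big cell because $B^- \cap U = \{1\}$ by uniqueness of the Bruhat factorization. Now $U_H \subseteq U$, being the group of unipotent elements of $B_H \subseteq B$, and under the above identification the right multiplication action of $U_H$ on $B^- \backslash G$ becomes right translation $u \mapsto u h$ on $U$. Hence $B^- \backslash G / U_H$ is birational to the coset space $U/U_H$ of the connected unipotent (in particular solvable) group $U$ by its closed subgroup $U_H$; by cor.~\ref{Sol quotient} this is rational, and its dimension is $\dim U - \dim U_H = u_G - u_H$.

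For part~(b) the inequality is immediate: the maps $\alpha$ and $\gamma$ of~\ref{notA} are dominant, so their composite $\gamma \circ \alpha : B^- \backslash G/U_H \dashrightarrow B^- \backslash G/H$ is dominant and dimension cannot increase, giving $\dim(B^- \backslash G/H) \leqslant \dim(B^- \backslash G/U_H)$. For the equality case I would invoke lemma~\ref{finbir} with $X = B^- \backslash G$, $H_1 = U_H$ and $H_2 = H$ (both connected, acting on the right, with $U_H \subseteq H$), so that $\gamma \circ \alpha$ is precisely the quotient map $X/H_1 \dashrightarrow X/H_2$. The hypothesis $\dim(B^- \backslash G/H) = \dim(B^- \backslash G/U_H)$ is condition~(d) of that lemma, which in characteristic~$0$ forces condition~(a): $\gamma \circ \alpha$ is birational. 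Thus $B^- \backslash G/H$ is birational to $B^- \backslash G/U_H$, which is rational by part~(a). Applying lemma~\ref{Sol} to the left action of the connected solvable group $B^-$ on $G/H$ then shows $G/H$ is birational to $(B^- \backslash G/H) \times \bP^d$, hence rational.

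For part~(c) I would combine (a), (b) and lemma~\ref{dim1or2} through a dimension dichotomy. By part~(a), $\dim(B^- \backslash G/H) \leqslant u_G - u_H \leqslant 3$, so this dimension lies in $\{0,1,2,3\}$. If it is $\leqslant 2$, then since $B^- \backslash G/H$ is dominated by the rational variety $G$ it is unirational, and lemma~\ref{dim1or2} (applied with the Borel subgroup $B^-$) yields rationality of $G/H$. If instead it equals $3$, then $3 \leqslant u_G - u_H \leqslant 3$ forces $\dim(B^- \backslash G/H) = u_G - u_H$, i.e.~equality in part~(b), so $G/H$ is again rational. The conceptual point where care is needed---and what I regard as the main obstacle---is that the bound $u_G - u_H \leqslant 3$ \emph{cannot} be exploited directly through unirationality, since a unirational threefold need not be rational; one must genuinely either descend to dimension $\leqslant 2$, where L\"uroth and Castelnuovo apply, or promote the inequality of part~(b) to an equality so as to transport rationality along the birational map $\gamma \circ \alpha$. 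The remaining verifications---that $U_H$ is a closed connected subgroup of $U$, and that the big-cell identification in part~(a) is genuinely $U_H$-equivariant---are routine bookkeeping.
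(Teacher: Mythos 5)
Your proposal is correct and follows essentially the same route as the paper: part (a) via the big cell identification $B^-\backslash B^-U/U_H \cong U/U_H$ and cor.~\ref{Sol quotient}; part (b) via lemma~\ref{finbir} applied to the right actions of $U_H\subseteq H$ on $B^-\backslash G$ followed by lemma~\ref{Sol} for the left $B^-$-action; and part (c) via the same dichotomy between $\dim(B^-\backslash G/H)\leqslant 2$ (lemma~\ref{dim1or2}) and the forced equality $\dim(B^-\backslash G/H)=\dim(B^-\backslash G/U_H)=3$. No gaps.
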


\begin{proof}
The Bruhat (big cell) decomposition of $G$
shows that
$B^- \backslash G / U_H$ contains
a Zariski-dense constructible subset
$B^- \backslash  B^- U / U_H$
which is birational to $U/U_H$;
in turn,
this is rational
by cor.~\ref{Sol quotient}.
Since $\dim(U/U_H) = u_G - u_H$,
we see that
$B^- \backslash G / U_H$ is rational
and of that dimension;
this shows part~(a).

The asserted inequality of part~(b)
follows from the existence of
the dominant rational map
$\gamma \circ \alpha$
in the diagram~\eqref{dblcosetsdiagram} of~\ref{notA}.
If equality holds,
lemma~\ref{finbir}
applied to the right action of
$U_H$ and $H$ on $B^- \backslash G$
shows that
$\gamma \circ \alpha$ is a birational map,
and hence
$B^- \backslash G / H$ is rational;
by lemma~\ref{Sol}
applied to the left action of $B^-$ on $G/H$,
it then follows that
$G/H$ is rational.

For part~(c),
if $\dim(B^- \backslash G/H) \leqslant 2$,
the rationality of $G/H$
follows from lemma~\ref{dim1or2};
henceforth,
assume that
$\dim(B^- \backslash G / H) \geqslant 3$.
Our hypothesis
together with parts~(a) and~(b)
then yield
\[
  3
  \ \leqslant\
  \dim(B^- \backslash G / H)
  \ \leqslant\
  \dim(B^- \backslash G / U_H)
  \ \leqslant\
  3
  ,
\]
whence equality holds throughout,
and the rationality of $G/H$
follows from part~(b) again.
\end{proof}

\begin{remark}
\label{rmkBGBHrational}
Although we do not need it below,
it is of interest to note that
$B^- \backslash G / B_H$
is in fact rational.
Indeed,
$B^- \backslash G /U_H$
contains the Zariski-open subset
$B^- \backslash B^- U / U_H \cong U/U_H$,
and with respect to
the (regular) action of $T_H$ on $U/U_H$ by conjugation,
the isomorphism
is $T_H$-equivariant.
Thus
the quotient
$B^- \backslash G /B_H$
of
$B^- \backslash G /U_H$ by $T_H$
is birational to
the quotient of $U/U_H$ by $T_H$.
Moreover,
$U/U_H$ is $T_H$-equivariantly isomorphic
to the quotient
$V := \Lie(U) / \Lie(U_H)$
of Lie algebras,
on which $T_H$ acts linearly
via its adjoint actions on
$\Lie(U)$ and $\Lie(U_H)$.
So $B^- \backslash G /B_H$
is birational to
the quotient $V/T_H$.
Now choose a basis of $V$
for which the action of $T_H$
is diagonal,
and let
$V_0 \subseteq V$
denote the open subset
on which all coordinates are nonzero.
Then $V_0$ is isomorphic to
a torus,
on which $T_H$ acts by multiplication;
thus $V_0/T_H$ is a torus as well.
This shows that
$V_0/T_H$
and hence $V/T_H$ and $B^- \backslash G /B_H$
are all rational.
\end{remark}

\begin{lemma}
\label{orbit}
In the situation of~\ref{notA},
one has
$\dim(B^- \backslash G / H)
 \leqslant
 \dim(B^- \backslash G / B_H)
$;
and if equality holds,
then $G/H$ is rational.
\end{lemma}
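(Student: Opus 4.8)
The plan is to read the inequality off the diagram~\eqref{dblcosetsdiagram}, to treat the equality case by feeding the dimension coincidence into lemma~\ref{finbir}, and finally to transport rationality from $B^- \backslash G / B_H$ down to $G/H$. First, the inclusion $B_H \subseteq H$ induces the dominant rational map $\gamma \colon B^- \backslash G / B_H \dasharrow B^- \backslash G / H$ of~\eqref{dblcosetsdiagram}; since a dominant rational map between irreducible varieties cannot raise dimension, this already yields $\dim(B^- \backslash G / H) \leqslant \dim(B^- \backslash G / B_H)$.

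For the equality case, I would apply lemma~\ref{finbir} to the two connected groups $B_H \subseteq H$ acting on the right on $X := B^- \backslash G$, which is irreducible as $G$ is connected. Here $X/B_H = B^- \backslash G / B_H$ and $X/H = B^- \backslash G / H$, and the dominant quotient map is precisely $\gamma$. The hypothesis $\dim(B^- \backslash G / H) = \dim(B^- \backslash G / B_H)$ is condition~(d) of lemma~\ref{finbir}, whence condition~(a) holds and $\gamma$ is birational; thus $B^- \backslash G / H$ is birational to $B^- \backslash G / B_H$.

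It then remains to see that $B^- \backslash G / B_H$ is rational and to descend to $G/H$. For the rationality I would argue as in remark~\ref{rmkBGBHrational}: by the big-cell argument already used in lemma~\ref{uGH3}(a), $B^- \backslash G / U_H$ is birational to $U/U_H$, which is rational by cor.~\ref{Sol quotient}; the remaining right quotient by $T_H = B_H/U_H$ is handled (in characteristic~$0$) through the $T_H$-equivariant linearization $U/U_H \cong \Lie(U)/\Lie(U_H)$, where diagonalizing the torus action exhibits the generic orbit space as a torus, so $B^- \backslash G / B_H$ is rational. Combining with the previous paragraph, $B^- \backslash G / H$ is rational. Finally, applying lemma~\ref{Sol} to the left action of the connected solvable group $B^-$ on $G/H$, the rationality of $B^- \backslash (G/H) = B^- \backslash G / H$ forces $G/H$ to be rational.

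The step I expect to be the main obstacle is the rationality of the torus quotient $B^- \backslash G / B_H$ itself. The delicate point is that the equality hypothesis compares $B^- \backslash G / H$ only with $B^- \backslash G / B_H$ and gives no control over $\dim(B^- \backslash G / U_H)$; consequently one cannot simply invoke lemma~\ref{uGH3}(b). Nor can one deduce the rationality of $B^- \backslash G / B_H$ formally from that of the rational variety $G/B_H$: passing to a $B^-$-quotient via lemma~\ref{Sol} only realizes $B^- \backslash G / B_H$ as a \emph{factor} of $G/B_H$, and rationality does not descend to factors. One genuinely needs the explicit diagonalization of the $T_H$-action on $\Lie(U)/\Lie(U_H)$ to conclude.
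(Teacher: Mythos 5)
Your proof is correct, and the first two steps (reading the inequality off the diagram~\eqref{dblcosetsdiagram}, then feeding condition~(d) of lemma~\ref{finbir} into the right actions of $B_H \subseteq H$ on $B^- \backslash G$ to make $\gamma$ birational) coincide exactly with the paper's. Where you diverge is the endgame. You prove outright that $B^- \backslash G / B_H$ is rational, via the $T_H$-equivariant linearization $U/U_H \cong \Lie(U)/\Lie(U_H)$ and the diagonalization of the torus action --- which is precisely the content of remark~\ref{rmkBGBHrational}, so the ingredient is available and your characteristic-zero caveat is the right one --- and then descend to $G/H$ by lemma~\ref{Sol}. The paper instead sidesteps the rationality of $B^- \backslash G / B_H$ entirely: it applies lemma~\ref{no-name-lemma} with $X_1 = B^- \backslash G / U_H$ (rational of dimension $u_G - u_H$ by lemma~\ref{uGH3}(a)), $M_1 = T_H$, $X_2 = G/H$, $M_2 = B^-$, noting that $X_1/M_1 = B^- \backslash G/B_H$ is birational to $M_2 \backslash X_2 = B^- \backslash G/H$ via $\gamma$ and that $\dim X_1 \leqslant \dim X_2$, so the rationality of $X_1$ transfers directly to $X_2$. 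Your route costs the explicit linearization argument but yields the stronger intermediate fact that the double coset space itself is rational; the paper's route is lighter, needing only the easy rationality of $U/U_H$ and the formal no-name lemma. Your closing diagnosis --- that one cannot invoke lemma~\ref{uGH3}(b) for lack of control on $\dim(B^- \backslash G/U_H)$, and that rationality does not descend to factors --- is accurate, and the no-name lemma is exactly the paper's device for threading that needle without the linearization.
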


\begin{proof}
The asserted inequality
follows from the existence of
the dominant rational map
$\gamma$
in the diagram~\eqref{dblcosetsdiagram} of~\ref{notA}.
If equality holds,
then by lemma~\ref{finbir}
applied to the right action of
$B_H$ and $H$
on $B^- \backslash G$,
we see that
$\gamma
 :
 B^- \backslash G / B_H
 \dasharrow
 B^- \backslash G / H
$
is birational.
By lemma~\ref{uGH3},
the variety
$X_1 := B^- \backslash G /U_H$ is rational,
of dimension $u_G - u_H$.
The torus $M_1 := B_H/U_H$
acts by right multiplication on $X_1$
with quotient
$X_1/M_1 = B^- \backslash G / B_H$.
On the other hand,
the variety
$X_2 := G/H$ is of dimension~$(t_G - t_H) + 2(u_G - u_H) + u(H)$
by the formula~\eqref{dimformula} in~\ref{notA}.
The solvable group $M_2 := B^-$
acts by left multiplication on $X_2$
with quotient
$M_2 \backslash X_2 = B^- \backslash G / H$.
Thus
$X_1/M_1$ is birational to $M_2 \backslash X_2$
via $\gamma$;
since $\dim X_1 \leqslant \dim X_2$,
lemma~\ref{no-name-lemma}
is applicable
and shows that
the rationality of $X_1$
implies that of $X_2 = G/H$.
This completes
the proof of the lemma.
\end{proof}

\begin{lemma}
\label{BU}
In the situation of~\ref{notA},
set
\[
  d
  \ :=\
  \dim(U^- \backslash G / B_H)
  -
  \dim (B^- \backslash G / B_H)
  .
\]
\begin{itemize}
\item[(a)]
Let $L$ denote
the identity component of
the kernel $L'$ of
the natural left action of $T$ on
$U^- \backslash G/B_H$.
Then $d = t_G - t_L$,
where $t_L := \dim L$.
\item[(b)]
If $d \leqslant 1$,
then $G/H$ is rational.
\end{itemize}
\end{lemma}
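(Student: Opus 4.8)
The plan is to establish (a) by a direct dimension count and to reduce (b) to lemma~\ref{uGH3}(c) by showing that the hypothesis $d\le 1$ forces $u_G-u_H\le 1$. For part~(a), I would use that $B^-=T\ltimes U^-$, so that $B^-\backslash G/B_H$ is the rational quotient of $Y:=U^-\backslash G/B_H$ by the left action of $T$; hence $d$ is precisely the dimension of a general $T$-orbit on $Y$. By lemma~\ref{tSS} I may replace $T$ by $T/L'$ without altering this quotient, and $T/L'$ then acts generically faithfully, hence generically freely by lemma~\ref{Demazure generic freeness}. Therefore $\dim(B^-\backslash G/B_H)=\dim Y-\dim(T/L')=\dim Y-(t_G-t_L)$, which rearranges to $d=t_G-t_L$.

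For part~(b) I would record two dimension formulas. The big-cell identification of $U^-\backslash G$ with $B$ (which is $B_H$-equivariant, since $B_H\subseteq B$ acts by right multiplication on both sides) shows that $Y$ is birational to $B/B_H$, so $\dim Y=(t_G-t_H)+(u_G-u_H)$. On the other hand, remark~\ref{rmkBGBHrational} identifies $B^-\backslash G/B_H$ birationally with $V/T_H$, where $V:=\Lie(U)/\Lie(U_H)$ carries the adjoint action of $T_H$; hence $\dim(B^-\backslash G/B_H)=(u_G-u_H)-r$, where $r$ is the dimension of a general $T_H$-orbit on $V$, equivalently the rank of the sublattice of $X^*(T_H)$ spanned by the $T_H$-weights on $V$. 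Subtracting the two formulas yields $d=(t_G-t_H)+r$.

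It then remains to extract $\dim V\le 1$ from $d\le 1$. Since $t_G-t_H\ge 0$ and $r\ge 0$, only the cases $(t_G-t_H,r)\in\{(0,0),(1,0),(0,1)\}$ occur. The $T_H$-weights on $V$ are the restrictions to $T_H$ of those positive roots of $(G,T)$ whose root spaces have nonzero image in $V$. When $r=0$ all these weights vanish on $T_H$, so they lie in the annihilator of $T_H$ in $X^*(T)$, a sublattice of rank $t_G-t_H\le 1$; when $r=1$ one has $t_G=t_H$ and the surviving weights span a line. In every case all surviving roots lie on a single rational line in $X^*(T)_{\mathbb{Q}}$, and since the root system of the semisimple group $G$ is reduced, such a line contains at most one positive root. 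Hence at most one root space survives in $V$, giving $\dim V=u_G-u_H\le 1$, and lemma~\ref{uGH3}(c) then yields the rationality of $G/H$.

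The step I expect to be most delicate is the careful justification of the two dimension formulas—confirming that the big-cell identification is genuinely $B_H$-equivariant, so that $Y$ is birational to $B/B_H$, and that remark~\ref{rmkBGBHrational} computes $\dim(B^-\backslash G/B_H)$ as $(u_G-u_H)-r$—after which the clean reduction, via reducedness of the root system, of the codimension-$\le 1$ weight configuration to the single bound $\dim V\le 1$ is the combinatorial crux.
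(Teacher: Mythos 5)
Your part~(a) follows the paper's own argument essentially verbatim: pass to the generically faithful quotient $T/L'$ acting on $U^- \backslash G/B_H$ (lemma~\ref{tSS}) and apply lemma~\ref{Demazure generic freeness}. Part~(b) is also correct, but it takes a genuinely different route. The paper works with the centralizer $D := C_G(L)$: it proves, by a torus-conjugacy argument inside the group $L \cdotp U_H$, that $U \subseteq U_D \cdotp U_H$ with $U_D := U \cap D$, and then reads off $u_G - u_H \leqslant \dim U_D \leqslant 1$ from the structure of centralizers of subtori of codimension $\leqslant 1$ in $T$ ($D = T$ when $L$ is regular; $D$ isogenous to $L \times \SL_2$ when $L$ is singular of codimension one). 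You instead subtract the two dimension counts $\dim(U^-\backslash G/B_H) = (t_G-t_H)+(u_G-u_H)$ and $\dim(B^-\backslash G/B_H) = (u_G-u_H)-r$ --- the latter via the linearization $B^-\backslash G/B_H \sim V/T_H$ of remark~\ref{rmkBGBHrational}, with $r$ the generic $T_H$-orbit dimension on $V = \Lie(U)/\Lie(U_H)$, equivalently the rank of the lattice spanned by the $T_H$-weights of $V$ --- to obtain $d = (t_G-t_H)+r$, and then invoke reducedness of the root system: when $(t_G-t_H)+r \leqslant 1$, the positive roots contributing to $V$ all lie on a single rational line in $X^*(T)\otimes\mathbb{Q}$, so there is at most one of them and $u_G - u_H = \dim V \leqslant 1$. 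Both arguments then conclude by lemma~\ref{uGH3}(c). Your route avoids the centralizer computation and the claim $U \subseteq U_D \cdotp U_H$ altogether, at the price of using remark~\ref{rmkBGBHrational} (which the paper notes it does not otherwise need) and the characteristic-zero identification $U/U_H \cong \Lie(U)/\Lie(U_H)$; as a bonus it makes visible that your $r$ coincides with the quantity $e$ of lemma~\ref{Hbu} and that $d - e = t_G - t_H$ always holds, which is essentially the content of lemma~\ref{equiv}.
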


\begin{proof}
First note that
$d \geqslant 0$
by the existence of
the dominant rational map $\varphi$
in the diagram~\eqref{dblcosetsdiagram} of~\ref{notA}.
The torus $L$
is of finite index in
the diagonalizable group $L'$
contained in $T$;
hence
$\dim(T/L') = \dim(T/L) = t_G - t_L$.
By construction,
the induced left action of $T/L'$
on $U^- \backslash G/B_H$
is generically faithful,
and its quotient is
$TU^- \backslash G/B_H
 =
 B^- \backslash G/B_H
$.
Hence
by lemma~\ref{Demazure generic freeness},
we have
\[
  \dim(U^- \backslash G/B_H)
  \ =\
  \dim(B^- \backslash G/B_H)
  +
  \dim(T/L')
  ,
\]
from which it follows that
$d = \dim(T/L') = t_G - t_L$.
This shows part~(a) of the lemma.

Let $D := C_G(L)$ denote
the centralizer of $L$ in $G$;
it is a connected reductive subgroup of $G$
(cf.~\cite[\S 26.2, Cor.~A; \S 22.3, Th.]{Hu})
containing the maximal torus $T$,
and a Borel subgroup
is given by $B \cap D$
(cf.~\cite[\S 22.4, Cor.]{Hu}),
whose unipotent radical is
$U_D := U \cap D$.
We set
$u_D := \dim U_D$.

We claim that
$U$ is contained in
the image $U_D \cdotp U_H$
of multiplying
$U_D$ and $U_H$ in $G$.
Assuming this for the moment,
we infer that
$u_G \leqslant u_D + u_H$,
and part~(b) of the lemma
can be deduced from this
as follows.
If $d = 0$,
then $t_L = t_G$,
so $L = T$ is the maximal torus of $G$,
and it is self-centralizing in $G$
(cf.~\cite[\S 26.2, Cor.~A]{Hu}),
whence $D = T$,
and we have
$u_D = 0$.
If $d = 1$,
then $L$ is a subtorus
of codimension~1 in $T$;
if $L$ is a regular subtorus,
then $D = T$
and we have
$u_D = 0$
as before;
if $L$ is a singular subtorus,
then $D$ is isogenous to $L \times \SL_2$
(cf.~\cite[\S 26.2, Cor.~B]{Hu}),
and we have
$u_D = 1$.
In any case,
we see that
$d \leqslant 1$
implies
$u_G - u_H \leqslant u_D \leqslant 1 \leqslant 3$,
and the rationality of $G/H$
follows from lemma~\ref{uGH3}.

We now proceed to prove our claim that
$U \subseteq U_D \cdotp U_H$.
The torus $T$ normalizes $U^-$,
and so it acts regularly from the left
on $U^- \backslash G/B_H$,
which contains
$U^- \backslash U^- B/B_H$
(isomorphic to $B/B_H$)
as a Zariski-dense $T$-stable open subset,
by the Bruhat (big cell) decomposition of $G$.
Hence
$L$ acts trivially from the left on
$U^- \backslash U^- B/B_H$.
This means that
for any $b \in B$ and any $\ell \in L$,
one has
$U^- \cdotp \ell \cdotp b \cdotp B_H
 =
 U^- \cdotp b \cdotp B_H
$,
whence
\[
  \ell \cdotp b
  \ =\
  v_1 \cdotp b \cdotp b_1
  \qquad
  \text{for some}
  \quad
  v_1 \in U^-
  ,
  \
  b_1 \in B_H
  .
\]
Thus $v_1 \in U^- \cap B = \{1\}$,
and if we write
$b_1 = t_1 \cdotp u_1$
(with $t_1 \in T_H$, $u_1 \in U_H$)
and
$b = t \cdotp u$
(with $t \in T$, $u \in U$),
then reducing modulo~$U$ shows that
$\ell = t_1$ in $T$.
Hence,
for any $b \in B$
and $\ell \in L$,
there exists $u_1 \in U_H$
such that
\[
  b^{-1} \cdotp \ell \cdotp b
  \ =\
  \ell \cdotp u_1
  .
\]
Specializing this relation
to the case when $b$ lies in $U_H$,
we see that
$L$ normalizes $U_H$,
and hence
$L \cdotp U_H$ is a connected subgroup of $B$
containing $L$ as a maximal torus.
Specializing the relation
to the case when $b$ equals $u \in U$,
we see that
$u^{-1} \cdotp L \cdotp u$
is also a maximal torus
in $L \cdotp U_H$,
so it is $U_H$-conjugate to $L$:
there exists $u_2 \in U_H$
such that
$(u \cdotp u_2)^{-1} \cdotp L \cdotp (u \cdotp u_2)
 =
 L
$,
or equivalently,
$u \cdotp u_2 \in U \cap N_G(L)$.
But since
$U$ is connected and solvable,
by~\cite[\S 19.4, Prop.]{Hu}),
$U \cap N_G(L) = N_U(L)$
is equal to
$C_U(L) = U \cap C_G(L) = U \cap D = U_D$.
We have thus shown that
for any $u \in U$,
there exists $u_2 \in U_H$
(depending on $u$)
such that
$u \cdotp u_2 \in U_D$.
Hence
$U$ is contained in $U_D \cdotp U_H$,
and our claim follows.
\end{proof}

\begin{lemma}
\label{tu6}
In the situation of~\ref{notA}:
\begin{itemize}
\item[(a)]
$U^- \backslash G/B_H$
is rational,
of dimension~$(t_G - t_H) + (u_G - u_H)$.
\item[(b)]
If $(t_G - t_H) + (u_G - u_H) \leqslant 5$,
then $G/H$ is rational.
\end{itemize}
\end{lemma}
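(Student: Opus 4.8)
The plan is to treat the two parts separately: part~(a) follows from the Bruhat (big-cell) decomposition together with cor.~\ref{Sol quotient}, while part~(b) is a purely numerical consequence of part~(a) and the chain of inequalities already assembled in lemmas~\ref{orbit}, \ref{BU} and~\ref{dim1or2}. No new geometric input beyond these should be needed.

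For part~(a), I would first invoke the big-cell decomposition of $G$ (cf.~\cite[\S 28.5]{Hu}): the product $U^- \cdot T \cdot U = U^- \cdot B$ is a Zariski-dense open subvariety of $G$, and multiplication $U^- \times B \to U^- B$ is an isomorphism of varieties. Since $U^-$ acts freely on the left of this cell, quotienting identifies $U^- \backslash U^- B$ with $B$, so $U^- \backslash G$ contains a dense open copy of $B$. Passing to the right quotient by $B_H$, I conclude that $U^- \backslash G / B_H$ is birational to $B/B_H$. As $B$ is connected solvable and $B_H \subseteq B$ is closed, cor.~\ref{Sol quotient} makes $B/B_H$, and hence $U^- \backslash G/B_H$, rational. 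For the dimension, $\dim B = t_G + u_G$ and $\dim B_H = t_H + u_H$ (reading the latter off $B_H = B_S \ltimes U(H)$ with $\dim U_S = u_H - u(H)$), so $\dim(B/B_H) = (t_G - t_H) + (u_G - u_H)$, as asserted.

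For part~(b), I would argue by contradiction: suppose $G/H$ is \emph{not} rational while $(t_G - t_H) + (u_G - u_H) \leqslant 5$. Set $c := \dim(B^- \backslash G/H)$ and keep the quantity $d$ of lemma~\ref{BU}. Lemma~\ref{BU}(b) forces $d \geqslant 2$. Lemma~\ref{dim1or2}, applied with the Borel subgroup $B^-$, forces $c \geqslant 3$, since $c \leqslant 2$ would already yield rationality. Lemma~\ref{orbit} gives $c \leqslant \dim(B^- \backslash G/B_H)$ with equality implying rationality; as $G/H$ is not rational this inequality is strict, so $\dim(B^- \backslash G/B_H) \geqslant c + 1 \geqslant 4$. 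On the other hand, part~(a) and the definition of $d$ give $\dim(B^- \backslash G/B_H) = \bigl((t_G - t_H) + (u_G - u_H)\bigr) - d \leqslant 5 - d$. Hence $5 - d \geqslant 4$, i.e.\ $d \leqslant 1$, contradicting $d \geqslant 2$. Therefore $G/H$ is rational.

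The routine part is~(a): the only care required is to verify that the big-cell map identifies $U^- \backslash G/B_H$ birationally with $B/B_H$ and that the dimension bookkeeping matches. The conceptual crux is~(b), which I would frame not as the analysis of any single double-coset space but as several of them — $U^- \backslash G/B_H$, $B^- \backslash G/B_H$, and $B^- \backslash G/H$ — played off against one another. The main thing to get right is that \emph{every} way the governing inequalities of lemmas~\ref{orbit} and~\ref{BU}(b) (together with the lower bound on $c$ from lemma~\ref{dim1or2}) can fail to close is numerically incompatible with the bound $\leqslant 5$; the short arithmetic above exhibits this, so I expect no genuine obstacle beyond the already-established lemmas, the only real risk being an off-by-one slip in the inequality chain.
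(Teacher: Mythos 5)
Your proposal is correct and follows essentially the same route as the paper: part~(a) is the identical big-cell argument reducing $U^-\backslash G/B_H$ to $B/B_H$ and invoking cor.~\ref{Sol quotient}, and part~(b) runs the same inequality chain through lemmas~\ref{dim1or2}, \ref{orbit} and~\ref{BU}, merely recast as a proof by contradiction rather than the paper's direct case split. No gap.
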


\begin{proof}
The Bruhat (big cell) decomposition of $G$
shows that
$U^- \backslash G/B_H$ contains
a Zariski-dense constructible subset
$U^- \backslash  U^- B / B_H$
which is birational to $B/B_H$;
in turn,
this is rational
by cor.~\ref{Sol quotient}.
Since $\dim(B/B_H) = (t_G - t_H) + (u_G - u_H)$,
we see that
$U^- \backslash G / B_H$ is rational
and of that dimension;
this shows part~(a).

For part~(b),
consider the dominant rational maps
$\gamma$ and $\varphi$
in the diagram~\eqref{dblcosetsdiagram} of~\ref{notA},
which give the inequalities
\[
  \dim(B^- \backslash G / H)
  \ \leqslant\
  \dim(B^- \backslash G / B_H)
  \ \leqslant\
  \dim(U^- \backslash G / B_H)
  .
\]
If $\dim(B^- \backslash G/H) \leqslant 2$,
the rationality of $G/H$
follows from lemma~\ref{dim1or2},
while if one has
$\dim(B^- \backslash G / H)
 =
 \dim(B^- \backslash G / B_H)
$,
the rationality of $G/H$
follows from lemma~\ref{orbit}.
In the remaining cases,
our hypothesis yields
\[
  4
  \ \leqslant\
  \dim(B^- \backslash G / H) + 1
  \ \leqslant\
  \dim(B^- \backslash G / B_H)
  \ \leqslant\
  \dim(U^- \backslash G / B_H)
  \ \leqslant\
  5
  ,
\]
whence
$d := \dim(U^- \backslash G / B_H) - \dim(B^- \backslash G / B_H)$
is~$\leqslant 1$;
the rationality of $G/H$
now follows from lemma~\ref{BU}.
\end{proof}

\begin{lemma}
\label{Hbu}
In the situation of~\ref{notA},
suppose $H$ is reductive;
set
\[
  e
  \ :=\
  \dim (B^- \backslash G / U_H)
  -
  \dim (B^- \backslash G / B_H)
  .
\]
\begin{itemize}
\item[(a)]
Let $K$ denote
the identity component of
the kernel $K'$ of
the natural right action of $T_H$ on
$B^- \backslash G/U_H$.
Then $e = t_H - t_K$,
where $t_K := \dim K$.
\item[(b)]
If the natural left action of $G$ on $G/H$
has zero-dimensional kernel,
then $t_K = 0$
and hence $e = t_H$.
\item[(c)]
If $e = 0$,
which is to say
$\dim (B^- \backslash G / U_H)
 =
 \dim (B^- \backslash G / B_H)
$,
then $G/H$ is rational.
\end{itemize}
\end{lemma}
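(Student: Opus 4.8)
For part~(a), I would mimic Lemma~\ref{BU}(a). Since $T_H$ normalizes $U_H$ (as $B_H = T_H \ltimes U_H$), right multiplication by $T_H$ descends to an action on $X := B^- \backslash G / U_H$ whose quotient is $X/T_H = B^- \backslash G / B_H$. Writing $K'$ for the (diagonalizable) kernel of this action and $K := (K')^\circ$, the induced action of the torus $T_H/K'$ is generically faithful, hence generically free by lemma~\ref{Demazure generic freeness}; the resulting dimension count gives $\dim(B^- \backslash G/U_H) = \dim(B^- \backslash G/B_H) + \dim(T_H/K')$, and since $\dim(T_H/K') = \dim(T_H/K) = t_H - t_K$, this is precisely $e = t_H - t_K$.

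The heart of the lemma is part~(b). Using the big-cell piece $B^- \backslash B^- U/U_H \cong U/U_H$, on which (as in lemma~\ref{uGH3} and remark~\ref{rmkBGBHrational}) the right $T_H$-action is conjugation, I would identify $K$ with the subgroup acting trivially on $V := \Lie(U)/\Lie(U_H)$, so that the nonzero-$K$-weight part of $\Lie(U)$ lies in $\Lie(U_H)$. The decisive observation is that the opposite quotient $\Lie(U^-)/\Lie(U_H^-)$ carries the \emph{negatives} of the $T_H$-weights occurring on $V$ (because $U^-,U_H^-$ are opposite to $U,U_H$); as a character and its negative have the same kernel, $K$ also acts trivially there, i.e.\ the nonzero-$K$-weight part of $\Lie(U^-)$ lies in $\Lie(U_H^-)$. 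Hence the sum $\mathfrak{m}$ of all nonzero $K$-weight spaces in $\mathfrak{g} := \Lie(G)$ satisfies $\mathfrak{m} \subseteq \Lie(U_H) \oplus \Lie(U_H^-) \subseteq \Lie(H)$. Since $\mathfrak{g} = \Lie(C_G(K)) \oplus \mathfrak{m}$ with $\Lie(C_G(K))$ the zero-weight part, $\operatorname{ad}(\Lie(C_G(K)))$ preserves each nonzero weight space, and one checks that the Lie subalgebra $\mathfrak{n}$ generated by $\mathfrak{m}$ is an ideal of $\mathfrak{g}$; as $\mathfrak{m} \subseteq \Lie(H)$ and $\Lie(H)$ is a subalgebra, $\mathfrak{n} \subseteq \Lie(H)$. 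Because $\characteristic(k)=0$ and $G$ is semisimple, $\mathfrak{n}$ is the Lie algebra of a connected normal subgroup $N_1 \lhd G$ with $N_1 \subseteq H$, so $N_1$ lies in the kernel $N$ of the left $G$-action on $G/H$. If $N$ is zero-dimensional then $N_1 = \{1\}$, whence $\mathfrak{m}=0$, every root of $G$ vanishes on $K$, and $K \subseteq Z(G)$ is finite and connected; thus $K = \{1\}$, $t_K = 0$, and $e = t_H$.

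For part~(c), note that $e = 0$ forces $t_K = t_H$ by~(a), and since $K \subseteq T_H$ is a subtorus of full dimension, $K = T_H$ (equivalently $T_H$ acts trivially on $V$). I would then induct on $t_H$. If $t_H = 0$ then $H = \{1\}$ and $G/H = G$ is rational by Chevalley (\cite{Ch}); the case $\overline{H} = \overline{G}$ below is likewise a point. If $t_H > 0$, then $t_K > 0$, and the construction of part~(b) produces a nontrivial connected normal subgroup $N_1 \lhd G$ with $N_1 \subseteq H$. Setting $\overline{G} := G/N_1$ and $\overline{H} := H/N_1$ gives $G/H \cong \overline{G}/\overline{H}$ with $\overline{G}$ semisimple, $\overline{H}$ reductive, and strictly smaller rank. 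The natural $T_H$-equivariant surjection $V \twoheadrightarrow \overline{V} := \Lie(\overline{U})/\Lie(\overline{U}_{\overline{H}})$ shows that the reduced datum still satisfies $\overline{e} = 0$, so the inductive hypothesis yields the rationality of $\overline{G}/\overline{H} = G/H$.

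The main obstacle is part~(b): one must upgrade the purely ``positive'' information $[K,U] \subseteq U_H$ coming from the big cell into a genuine normal subgroup of $G$ contained in $H$. The key is the weight-duality step, which supplies the ``negative'' containment $[K,U^-] \subseteq U_H^-$ for free; thereafter the ideal generated by the nonzero $K$-weight spaces does the work. I expect the only delicate verifications to be that $\mathfrak{n}$ is an ideal (via the zero-weight/nonzero-weight bracket relations) and that $e = 0$ is inherited by $(\overline{G},\overline{H})$ in the induction for~(c).
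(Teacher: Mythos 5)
Your part~(a) is exactly the paper's argument. For parts~(b) and~(c), however, you take a genuinely different route. The paper works with the centralizer $E := C_G(K)$ and $E_H := C_H(K)$: it proves the group-theoretic claim $U = \overline{U_E \cdotp U_H}$ by exploiting the trivial $K$-action on the big-cell piece $B^- \backslash B^- U/U_H$, deduces from a dimension count that $E/E_H \hookrightarrow G/H$ is an open immersion, and then reads off (b) from $E$-equivariance ($K$ acts trivially on $E/E_H$, hence on $G/H$) and (c) from $E_H = C_H(T_H) = T_H$ together with theorem~\ref{Th0} applied to $E/T_H$. You instead linearize: you show the nonzero-$K$-weight part $\mathfrak{m}$ of $\mathfrak{g}$ lies in $\Lie(H)$, promote it to an ideal, and obtain a connected normal subgroup $N_1 \lhd G$ inside $H$, which kills (b) immediately and reduces (c) to a lower-rank (in fact, after one quotient, trivial) situation. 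Your approach is arguably more transparent in characteristic~$0$ and makes the structural reason for (b) vivid (a nontrivial $K$ forces a normal subgroup of $G$ inside $H$); the paper's approach stays at the level of groups and, notably, produces the birational model $E/E_H$ of $G/H$, which is what it actually uses to conclude (c) in one step.

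One point in your write-up needs repair. You form the quotient $\Lie(U^-)/\Lie(U_H^-)$ and invoke weight duality, which presupposes $U_H^- \subseteq U^-$. The setup~\ref{notA} only stipulates $B \supseteq B_H^+$ and takes $B^-$ opposite to $B$ relative to $T$; it does not assert $B^- \supseteq B_H^-$, and this containment is not obviously automatic. Fortunately your essential claim $\mathfrak{m} \subseteq \Lie(H)$ survives without it, by a direct multiplicity count: for a $T_H$-weight $\chi$ with $\chi|_K \neq 0$ one has $V_\chi = 0$, so $\Lie(U)_\chi = \Lie(U_H)_\chi \subseteq \mathfrak{h}_\chi$, which is at most one-dimensional and, when nonzero, forces $\chi \in \Phi_H^+$; applying the same to $-\chi$ gives $\Lie(U)_{-\chi} = \Lie(U_H)_{-\chi} = 0$, hence $\Lie(U^-)_\chi = 0$ (its dimension equals that of $\Lie(U)_{-\chi}$), and so the full weight space $\mathfrak{g}_\chi$ equals $\mathfrak{h}_\chi$. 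Summing over $\chi$ with $\chi|_K \neq 0$ yields $\mathfrak{m} \subseteq \Lie(H)$ directly, after which your ideal/normal-subgroup argument and the induction in (c) go through as you describe.
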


\begin{proof}
Part~(a) of the lemma
is established
along the same lines as
part~(a) of lemma~\ref{BU},
using the generically faithful
right action of $T_H/K'$
on $B^- \backslash G / U_H$,
passing to the quotient
and applying lemma~\ref{Demazure generic freeness}
to get
\[
  \dim(B^- \backslash G/U_H)
  \ =\
  \dim(B^- \backslash G/B_H)
  +
  \dim(T_H/K')
  .
\]
%


Let $E := C_G(K)$ denote
the centralizer of $K$ in $G$;
it is a connected reductive subgroup of $G$
(cf.~\cite[\S 26.2, Cor.~A; \S 22.3, Th.]{Hu})
containing the maximal torus $T$,
and a Borel subgroup
is given by $B \cap E$
(cf.~\cite[\S 22.4, Cor.]{Hu}),
whose unipotent radical is
$U_E := U \cap E$.
Let $E_H := C_H(K) = E \cap H$ denote
the centralizer of $K$ in $H$;
it is a connected reductive subgroup of $H$
containing the maximal torus $T_H$,
and a Borel subgroup
is given by $B_H \cap E_H = B \cap E \cap H$,
whose unipotent radical is
$U_{E_H} := U_H \cap E_H = U \cap E \cap H$.
We set
$u_E := \dim U_E$
and
$u_{E_H} := \dim U_{E_H}$;
hence
$\dim E = t_G + 2u_E$
and
$\dim E_H = t_H + 2u_{E_H}$.

We claim that
$U$ is equal to
the Zariski closure
$\overline{U_E \cdotp U_H}$
of the image of multiplying
$U_E$ and $U_H$ in $G$.
Assuming this for the moment,
we infer that
$u_G \leqslant u_E + u_H$.
More precisely,
since the multiplication map
$U_E \times U_H \to \overline{U_E \cdotp U_H} = U$
has a general fiber isomorphic to
$U_{E_H} = U_E \cap U_H$,
we see that
$u_G + u_{E_H} = u_E + u_H$.
We have the natural inclusion map
\[
  E/E_H
  \ =\
  E/(E \cap H)
  \ \hookrightarrow\
  G/H
  .
\]
Here,
since $H$ is reductive by hypothesis,
one has $u(H) = 0$,
and so
by the dimension formula~\eqref{dimformula} in~\ref{notA},
we have
\[
  \dim(G/H)
  \ =\
  (t_G - t_H) + 2(u_G - u_H)
  .
\]
On the other hand,
\[
  \dim(E/E_H)
  \ =\
  (t_G + 2u_E) - (t_H + 2u_{E_H})
  \ =\
  (t_G - t_H) + 2(u_G - u_H)
  .
\]
Thus $\dim E/E_H = \dim G/H$,
which shows that
the locally closed subvariety $E/E_H$ of $G/H$
is in fact a Zariski-open subset,
whence
$G/H$ and $E/E_H$ are birational to each other.
This is the key fact needed
for showing parts~(b) and~(c) of the lemma.
For part~(b),
we note that
$K$ is a normal subgroup of both $E$ and $E_H$;
if we let
$\overline{E} := E/K$ and $\overline{E_H} := E_H/K$
denote the respective quotient groups,
then $E/E_H$ is
naturally isomorphic to
$\overline{E}/\overline{E_H}$,
and the morphisms
\[
  \overline{E}/\overline{E_H}
  \ \cong\
  E/E_H
  \ \hookrightarrow\
  G/H
\]
are $E$-equivariant
with respect to
the natural left actions of $E$.
But $K$ acts trivially on
$\overline{E}/\overline{E_H}$
by construction,
so it also acts trivially on $G/H$.
Hence $K$ is contained in
the kernel of the natural left action
of $G$ on $G/H$;
if this kernel is zero-dimensional,
so is $K$,
which is to say
$t_K = 0$.
For part~(c),
if we have $e = 0$,
then $t_K = t_H$,
so $K = T_H$ is the maximal torus of $H$,
and it is therefore
self-centralizing in $H$
(cf.~\cite[\S 26.2, Cor.~A]{Hu}),
whence $E_H = E \cap H = C_H(K)$
is equal to $T_H$.
Then $E/E_H = E/T_H$ is rational
by theorem~\ref{Th0},
and the rationality of $G/H$
follows.

To prove our claim,
it suffices to show that
a point in general position in $U$
belongs to (the Zariski closure of)
$U_E \cdotp U_H$,
since the reverse inclusion is clear.
The torus $T_H$ normalizes $U_H$,
and so it acts regularly from the right
on $B^- \backslash G/U_H$,
which contains
$B^- \backslash B^- U/U_H$
(isomorphic to $U/U_H$)
as a Zariski-dense open subset,
by the Bruhat (big cell) decomposition of $G$.
Hence
$K$ acts trivially from the right on
$B^- \backslash B^- U/U_H$.
This means that
for a point $u \in U$
in general position,
and for any $k \in K$,
one has
$B^- \cdotp u \cdotp k \cdotp U_H
 =
 B^- \cdotp u \cdotp U_H
$,
whence
\[
  u \cdotp k
  \ =\
  v_1 \cdotp t_1 \cdotp u \cdotp u_1
  \qquad
  \text{for some}
  \quad
  v_1 \in U^-
  ,
  \
  t_1 \in T
  ,
  \
  u_1 \in U_H
  .
\]
Thus $v_1 \in U^- \cap B = \{1\}$,
and reducing modulo~$U$ shows that
$k = t_1$ in $T$;
hence
$u^{-1} \cdotp k \cdotp u
 =
 k \cdotp u_1^{-1}
$
lies in $K \cdotp U_H$.
Note that
$K \cdotp U_H$ is a connected subgroup of $B_H$
containing $K$ as a maximal torus;
the above discussion shows that
$u^{-1} \cdotp K \cdotp u$
is also a maximal torus
in $K \cdotp U_H$,
so it is $U_H$-conjugate to $K$:
there exists $u_2 \in U_H$
such that
$(u \cdotp u_2)^{-1} \cdotp K \cdotp (u \cdotp u_2)
 =
 K
$,
or equivalently,
$u \cdotp u_2 \in U \cap N_G(K)$.
But since
$U$ is connected and solvable,
by~\cite[\S 19.4, Prop.]{Hu}),
$U \cap N_G(K) = N_U(K)$
is equal to
$C_U(K) = U \cap C_G(K) = U \cap E = U_E$.
We have thus shown that
for a point $u \in U$ in general position,
there exists $u_2 \in U_H$
(depending on $u$)
such that
$u \cdotp u_2 \in U_E$.
This establishes our claim
and hence completes the proof of the lemma
as well.
\end{proof}

\begin{lemma}
\label{uGH4}
In the situation of~\ref{notA},
suppose $H$ is reductive;
if $u_G - u_H \leqslant 4$,
then $G/H$ is rational.
\end{lemma}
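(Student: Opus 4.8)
The plan is to dispose of the case $u_G - u_H \leqslant 3$ at once via lemma~\ref{uGH3}(c), and then to settle the single remaining value $u_G - u_H = 4$ by a dimension-counting squeeze that calls on lemmas~\ref{uGH3}, \ref{dim1or2}, \ref{orbit} and~\ref{Hbu} in turn. Throughout I abbreviate $a := \dim(B^- \backslash G/U_H)$, $b := \dim(B^- \backslash G/B_H)$ and $h := \dim(B^- \backslash G/H)$. By lemma~\ref{uGH3}(a) one has $a = u_G - u_H = 4$ in the reduced case, and the dominant rational maps $\alpha$ and $\gamma$ of the diagram~\eqref{dblcosetsdiagram} in~\ref{notA} yield the chain of inequalities $h \leqslant b \leqslant a = 4$.

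First I would use the rationality criterion attached to each of these inequalities to peel off cases one by one. By lemma~\ref{uGH3}(b), the equality $h = a$ already forces $G/H$ to be rational, so I may assume $h \leqslant 3$. Applying lemma~\ref{dim1or2} to the Borel subgroup $B^-$, the subcase $h \leqslant 2$ is likewise rational, so I am reduced to $h = 3$. By lemma~\ref{orbit}, the equality $h = b$ again yields rationality, so I may further assume $h < b$, that is, $b \geqslant 4$.

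The key step is then purely numerical: $h = 3 < b$ forces $b \geqslant 4$, while $b \leqslant a = 4$; hence $b = 4$ and therefore $e := a - b = 0$. Since $H$ is reductive by hypothesis, lemma~\ref{Hbu}(c) is available, and it is precisely the assertion that $e = 0$ implies the rationality of $G/H$; this completes the argument.

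I do not expect a genuine obstacle here, as all the substantive work has been carried out in the preceding lemmas; the only content is to verify that the chain $h \leqslant b \leqslant a = 4$, together with the four rationality criteria, leaves no uncovered configuration. The one point to check carefully is that $h = 3$ combined with $h < b$ and $b \leqslant 4$ pins $b$ to exactly $4$, so that $e = 0$ and lemma~\ref{Hbu}(c) applies; this is where the hypothesis $u_G - u_H = 4$, through the equality $a = 4$, enters decisively.
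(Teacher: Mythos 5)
Your proposal is correct and follows essentially the same route as the paper: the same squeeze $\dim(B^-\backslash G/H) \leqslant \dim(B^-\backslash G/B_H) \leqslant \dim(B^-\backslash G/U_H) \leqslant 4$ combined with lemmas~\ref{dim1or2}, \ref{orbit} and~\ref{Hbu}(c), the only (harmless) difference being that you split off a few subcases the paper handles in one stroke by observing that at least one of the two inequalities must be an equality once $\dim(B^-\backslash G/H)\geqslant 3$.
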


\begin{proof}
Consider the dominant rational maps
$\gamma$ and $\alpha$
in the diagram~\eqref{dblcosetsdiagram} of~\ref{notA},
which give the inequalities
\[
  \dim(B^- \backslash G / H)
  \ \leqslant\
  \dim(B^- \backslash G / B_H)
  \ \leqslant\
  \dim(B^- \backslash G / U_H)
  .
\]
If $\dim(B^- \backslash G / H) \leqslant 2$,
the rationality of $G/H$
follows from lemma~\ref{dim1or2};
henceforth,
assume that
$\dim(B^- \backslash G / H) \geqslant 3$.
We have
$\dim(B^- \backslash G / U_H) \leqslant 4$
by our hypothesis and lemma~\ref{uGH3}.
Hence among the two inequalities
in the above display,
equality holds for at least one of them.
The rationality of $G/H$
then follows from
lemma~\ref{orbit} or lemma~\ref{Hbu}
respectively.
\end{proof}

\bigskip

We are now in a position to show
our main result of this section.

\begin{theorem}
\label{ThB}
Let $G$ be a connected linear algebraic group,
and let $H \subseteq G$ be
a connected closed subgroup.
If $\dim(G/H) \leqslant 10$,
then $G/H$ is a rational variety.
\end{theorem}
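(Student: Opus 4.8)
The plan is to reduce at once to the case of a semisimple $G$, and then to dispatch the theorem by a short case analysis driven entirely by the dimension formula~\eqref{dimformula} and the three rationality criteria already established in lemmas~\ref{uGH3}, \ref{tu6} and~\ref{uGH4}.

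First I would invoke lemma~\ref{mod radical} to replace $G$ by its maximal semisimple quotient $G' := G/R$ and $H$ by its image $H' := H/(H \cap R)$, which is again connected. Since $G/H$ is birational to $G'/H' \times \bP^s$, the rationality of $G'/H'$ implies that of $G/H$, and moreover $\dim(G'/H') \leqslant \dim(G/H) \leqslant 10$. Hence I may assume henceforth that $G$ is connected semisimple and $H$ is connected, and I would then set up all the notation of~\ref{notA}. Writing $a := t_G - t_H$, $b := u_G - u_H$ and $c := u(H)$, which are non-negative integers, the dimension formula~\eqref{dimformula} becomes $\dim(G/H) = a + 2b + c \leqslant 10$.

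The heart of the proof is then to observe that the following three criteria between them cover every admissible triple $(a,b,c)$: lemma~\ref{uGH3}(c) gives rationality when $b \leqslant 3$; lemma~\ref{tu6}(b) gives it when $a + b \leqslant 5$; and lemma~\ref{uGH4} gives it when $H$ is reductive (equivalently $c = 0$) and $b \leqslant 4$. I would verify exhaustiveness as follows. If $b \leqslant 3$ we are done; if $b \geqslant 6$ then $2b > 10$, which is impossible. If $b = 5$, the bound forces $a + c \leqslant 0$, hence $a = c = 0$ and $a + b = 5$, so lemma~\ref{tu6}(b) applies. If $b = 4$, the bound gives $a + c \leqslant 2$: when $a \leqslant 1$ one has $a + b \leqslant 5$ and lemma~\ref{tu6}(b) applies; when $a = 2$ one is forced to have $c = 0$, whence $H$ is reductive and lemma~\ref{uGH4} applies. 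This exhausts all cases.

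Since all the genuine geometric input has already been packaged into lemmas~\ref{uGH3}, \ref{tu6} and~\ref{uGH4}, the theorem itself presents no further geometric obstacle; the only point demanding attention is the bookkeeping that shows these three criteria tile the region $\{\,a + 2b + c \leqslant 10\,\}$. The one delicate boundary case is $b = 4,\ a = 2$ (so that $\dim(G/H) = 10$ exactly), where lemma~\ref{tu6}(b) just fails to apply; here it is crucial to notice that the dimension bound automatically forces $c = u(H) = 0$, i.e.~that $H$ is necessarily reductive, which is precisely the hypothesis needed in order to invoke lemma~\ref{uGH4}.
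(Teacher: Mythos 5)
Your proposal is correct and follows essentially the same route as the paper: reduce to semisimple $G$ via lemma~\ref{mod radical}, then use the dimension formula~\eqref{dimformula} to check that lemmas~\ref{uGH3}, \ref{tu6} and~\ref{uGH4} cover all cases, with the single boundary case $u_G-u_H=4$, $t_G-t_H=2$, $u(H)=0$ handled by lemma~\ref{uGH4} exactly as in the paper. Your bookkeeping is merely a more explicit version of the paper's own case analysis.
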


\begin{proof}
By lemma~\ref{mod radical},
we may replace $G$
by its maximal semisimple quotient;
henceforth
we assume that
$G$ is semisimple
and adopt the notation of~\ref{notA}.
By the dimension formula~\eqref{dimformula} in~\ref{notA},
we have
\[
  \dim(G/H)
  \ =\
  (t_G - t_H) + 2(u_G - u_H) + u(H)
  .
\]
If $(t_G - t_H) + (u_G - u_H) \leqslant 5$,
the rationality of $G/H$
follows from lemma~\ref{tu6},
while if $u_G - u_H \leqslant 3$,
the rationality of $G/H$
follows from lemma~\ref{uGH3}.
In the remaining cases,
our hypothesis $\dim(G/H) \leqslant 10$
together with the above dimension formula
imply that
\[
  u(H) = 0
  ,
  \quad
  u_G - u_H = 4
  ,
  \quad
  \text{and}
  \quad
  t_G - t_H = 2
  .
\]
Hence $H$ is reductive,
and the rationality of $G/H$
now follows from lemma~\ref{uGH4}.
\end{proof}

\bigskip

With a bit more work,
we can establish
a slightly technical
but also more applicable result
in theorem~\ref{ThBrank}.
First, we note that
the dimension formula
\[
  \dim(U^- \backslash G / B_H)
  \ =\
  (t_G + u_G) - (t_H + u_H)
  ,
  \qquad
  \dim(B^- \backslash G / U_H)
  \ =\
  u_G - u_H
\]
of lemmas~\ref{tu6} and~\ref{uGH3}
yields:

\begin{lemma}
\label{equiv}
In the situation of~\ref{notA},
the following are equivalent:
\begin{itemize}
\item[(a)]
$\dim (B^- \backslash G / B_H)
 =
 u_G - u_H - t_H
$.
\item[(b)]
$\dim (U^- \backslash G / B_H)
 -
 \dim (B^- \backslash G / B_H)
 =
 t_G$
(i.e.~$d = t_G$ in lemma~\ref{BU}).
\item[(c)]
$\dim (B^- \backslash G / U_H)
 -
 \dim (B^- \backslash G / B_H)
 =
 t_H$
(i.e.~$e = t_H$ in lemma~\ref{Hbu}).
\end{itemize}
\end{lemma}

\begin{theorem}
\label{ThBrank}
Let $G$ be a connected semisimple group,
and let $H \subseteq G$ be
a connected reductive closed subgroup.
Suppose
the natural left action of $G$ on $G/H$
has zero-dimensional kernel.
If $\dim(G/H) < t_G + t_H + 8$,
then $G/H$ is a rational variety.
\end{theorem}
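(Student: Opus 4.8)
The plan is to work entirely within the setup of~\ref{notA}, exploiting the two hypotheses to pin down the dimension of the double-coset space $B^- \backslash G / B_H$ and then invoke the rationality criteria already assembled in this section. Since $H$ is reductive we have $u(H) = 0$, so the dimension formula~\eqref{dimformula} reads $\dim(G/H) = (t_G - t_H) + 2(u_G - u_H)$. First I would translate the numerical hypothesis $\dim(G/H) < t_G + t_H + 8$ into the cleaner inequality $u_G - u_H \leqslant t_H + 3$, obtained by substituting this formula, cancelling $t_G$, and dividing by~$2$ (using integrality).

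Next I would bring in the hypothesis that the left action of $G$ on $G/H$ has zero-dimensional kernel. By lemma~\ref{Hbu}(b) this forces $t_K = 0$, hence $e = t_H$; and by the equivalence of conditions~(a) and~(c) in lemma~\ref{equiv}, this is precisely the statement that $\dim(B^- \backslash G / B_H) = u_G - u_H - t_H$. Combining this equality with the inequality from the first step yields the key bound $\dim(B^- \backslash G / B_H) = u_G - u_H - t_H \leqslant 3$. I expect this to be the crux of the argument: neither hypothesis alone controls $\dim(B^- \backslash G / B_H)$, but together the zero-dimensional-kernel condition (through lemmas~\ref{Hbu} and~\ref{equiv}) converts the bound on $u_G - u_H$ into a bound on this double-coset dimension. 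The only real obstacle is the bookkeeping of correctly chaining these two lemmas to produce the equality; once it is in hand, the endgame is routine.

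Finally, I would pass through the dominant rational map $\gamma$ of the diagram~\eqref{dblcosetsdiagram}: by lemma~\ref{orbit} one always has $\dim(B^- \backslash G / H) \leqslant \dim(B^- \backslash G / B_H) \leqslant 3$. If $\dim(B^- \backslash G / H) \leqslant 2$, then $G/H$ is rational by lemma~\ref{dim1or2} (applied to the Borel subgroup $B^-$). Otherwise $\dim(B^- \backslash G / H) = 3$, which forces $\dim(B^- \backslash G / B_H) = 3$ as well, so equality holds in lemma~\ref{orbit} and $G/H$ is again rational. This disposes of all cases and completes the proof.
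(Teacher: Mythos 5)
Your proposal is correct and follows essentially the same route as the paper's own proof: you use lemma~\ref{Hbu}(b) and lemma~\ref{equiv} to convert the zero-dimensional-kernel hypothesis into the equality $\dim(B^- \backslash G / B_H) = u_G - u_H - t_H$, translate the numerical hypothesis via the dimension formula into the bound $\dim(B^- \backslash G / B_H) \leqslant 3$, and then conclude with lemma~\ref{dim1or2} in the low-dimensional case and the equality case of lemma~\ref{orbit} otherwise. This matches the paper's argument step for step.
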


\begin{proof}
We adopt the notation of~\ref{notA}.
Our hypothesis on the action of $G$ on $G/H$
together with lemma~\ref{Hbu}
gives $e = t_H$,
which by lemma~\ref{equiv}
means that
$\dim(B^- \backslash G / B_H)
 =
 u_G - u_H - t_H
$.
Since $H$ is reductive by hypothesis,
one has $u(H) = 0$,
and so
by the dimension formula~\eqref{dimformula} in~\ref{notA},
we have
\[
  \dim(G/H)
  \ =\
  (t_G - t_H) + 2(u_G - u_H)
  .
\]
Hence
our assumption that
this is~$< t_G + t_H +8$
amounts to the inequality
\[
  \dim(B^- \backslash G / B_H)
  \ <\
  4
  .
\]
If $\dim(B^- \backslash G/H) \leqslant 2$,
the rationality of $G/H$
follows from lemma~\ref{dim1or2}.
In the remaining case,
by the inequality in lemma~\ref{orbit},
we must have
\[
  3
  \ \leqslant\
  \dim(B^- \backslash G / H)
  \ \leqslant\
  \dim(B^- \backslash G / B_H)
  \ \leqslant\
  3
  ,
\]
whence equality holds throughout,
and the rationality of $G/H$
follows from lemma~\ref{orbit} again.
\end{proof}

\begin{corollary}
\label{B23C3G2}
Let $G$ be a connected group
which is almost-simple
of type~$B_3$ or $G_2$,
and let $H \subseteq G$ be
a connected semisimple subgroup
of maximal rank in $G$.
Then $G/H$ is rational.
\end{corollary}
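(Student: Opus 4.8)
The plan is to derive this corollary as a direct application of theorem~\ref{ThBrank}. Since $G$ is almost-simple of type $B_3$ or $G_2$, it is in particular connected semisimple, and since $H$ is connected semisimple it is a fortiori connected reductive; thus the standing hypotheses of theorem~\ref{ThBrank} are satisfied. We may assume $H \subsetneq G$, since otherwise $G/H$ is a point and the statement is trivial.

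First I would check the zero-dimensional kernel hypothesis of theorem~\ref{ThBrank}. The kernel of the natural left action of $G$ on $G/H$ is the largest normal subgroup of $G$ contained in $H$, namely $\bigcap_{g \in G} g H g^{-1}$. As $G$ is almost-simple, every normal subgroup of $G$ is either central, and hence finite, or all of $G$; since $H$ is proper, this kernel is finite and therefore zero-dimensional, as required.

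Next I would estimate $\dim(G/H)$ against the threshold in theorem~\ref{ThBrank}. Because $H$ has maximal rank we have $t_H = t_G$, so the threshold $t_G + t_H + 8$ equals $2t_G + 8$. Choosing a maximal torus $T \subseteq H$ of $G$ and writing $\Phi_G, \Phi_H$ for the corresponding root systems, one has $\Phi_H \subseteq \Phi_G$ and $\dim(G/H) = |\Phi_G| - |\Phi_H|$. Since $H$ is semisimple of rank $t_G$, its positive roots are at least as numerous as its $t_G$ simple roots, so $|\Phi_H| \geqslant 2t_G$ (with equality precisely for type $A_1^{t_G}$); hence $\dim(G/H) \leqslant |\Phi_G| - 2t_G$. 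It therefore suffices to verify the numerical inequality $|\Phi_G| < 4t_G + 8$.

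Finally I would check the two cases. For $G_2$ one has $t_G = 2$ and $|\Phi_G| = 12$, so $12 < 16 = 4t_G + 8$, giving $\dim(G/H) \leqslant 8$; for $B_3$ one has $t_G = 3$ and $|\Phi_G| = 18$, so $18 < 20 = 4t_G + 8$, giving $\dim(G/H) \leqslant 12$. In either case $\dim(G/H) < t_G + t_H + 8$, and theorem~\ref{ThBrank} then yields the rationality of $G/H$. I do not expect any serious obstacle here: the entire weight of the argument is carried by theorem~\ref{ThBrank}, and what remains is only the numerical inequality, which holds with room to spare (the extremal case being the subgroup of type $A_1^{t_G}$). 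Should one prefer an explicit verification, the finitely many maximal-rank semisimple subgroups $H$ could instead be enumerated via the Borel--de~Siebenthal algorithm, but the crude root-count bound already suffices.
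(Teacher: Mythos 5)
Your proposal is correct and follows essentially the same route as the paper: verify the zero-dimensional kernel condition using almost-simplicity, bound $\dim(G/H)$ via the crude estimate $\dim H \geqslant 3\,t_G$ for a semisimple $H$ of maximal rank (your root-count $|\Phi_H| \geqslant 2t_G$ is the same bound in different clothing), and invoke theorem~\ref{ThBrank}. The numerical verifications for $B_3$ and $G_2$ match the paper's table exactly.
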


\begin{proof}
Since the case when $H = G$ is trivial,
we shall assume that
the connected semisimple closed subgroup
$H \subsetneq G$
is properly contained in $G$.
The natural left action of $G$ on $G/H$
is thus non-trivial,
and since
$G$ is almost-simple by hypothesis,
the kernel of the action
is zero-dimensional;
hence
theorem~\ref{ThBrank} is applicable
whenever
the required bound on $\dim(G/H)$ holds.
As $H$ is semisimple,
we have the crude lower bound
$\dim H \geqslant 3n$
where $n$ denotes
the common rank of $G$ and $H$.
From the following table of values:
\begin{center}
\begin{tabular}{l || c | c}
  $G$ of type
  &
  $B_3$
  &
  $G_2$
  \\
  \hline
  $\dim G = $
  &
  21
  &
  14
  \\
  $\dim H \geqslant $
  &
  9
  &
  6
  \\
  $\dim(G/H) \leqslant $
  &
  12
  &
  8
  \\
  $n + n + 8 =$
  &
  14
  &
  12
  \\
\end{tabular}
\end{center}
we see that
$\dim(G/H) < n + n + 8$
in each of the cases considered,
whence
theorem~\ref{ThBrank} yields
the rationality of $G/H$.
\end{proof}

We note that
cor.~\ref{B23C3G2}
completes the proof of
theorem~\ref{ThA}
in~\ref{ThAproof}.

%
%
%
%


\section{Concluding remarks}

In this final section,
we indicate a few other cases
in which our results yield
an affirmative answer to
the rationality problem~\ref{RatPbm}.
We still work over
an algebraically closed base field $k$
of characteristic~0.

\begin{proposition}
Let $G$ be a connected linear algebraic group
with $\dim G \leqslant 13$.
Then for any connected closed subgroup $H \subseteq G$,
$G/H$ is a rational variety.
\end{proposition}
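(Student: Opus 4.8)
The plan is to reduce to the semisimple case and then exploit the dimension bound to force $G/H$ into the range already covered by theorem~\ref{ThB} or theorem~\ref{ThBrank}. First I would invoke lemma~\ref{mod radical} to replace $G$ by its maximal semisimple quotient $G'$ and $H$ by its image $H'$ in $G'$; since rationality of $G'/H'$ implies that of $G/H$, and $\dim G' \leqslant \dim G \leqslant 13$, I may henceforth assume $G$ is semisimple. If $\dim(G/H) \leqslant 10$, then theorem~\ref{ThB} already gives the rationality of $G/H$, so the real content is confined to the cases $\dim(G/H) \in \{11, 12, 13\}$, which (because $\dim G \leqslant 13$) force $\dim H$ to be very small, namely $\dim H \leqslant 2$.

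The next step is to analyze what a connected semisimple group $G$ of dimension $\leqslant 13$ with such a large homogeneous quotient can look like. Adopting the notation of~\ref{notA}, I would use the dimension formula~\eqref{dimformula}, namely $\dim(G/H) = (t_G - t_H) + 2(u_G - u_H) + u(H)$, together with theorems~\ref{ThB} and~\ref{ThBrank}. Whenever $H$ is reductive with zero-dimensional kernel for the $G$-action, theorem~\ref{ThBrank} applies provided $\dim(G/H) < t_G + t_H + 8$; since $\dim(G/H) \leqslant 13$ and the semisimple groups of dimension $\leqslant 13$ have $t_G \geqslant$ comparable to their dimension, I would check case-by-case (using $\dim G = t_G + 2u_G$) that this bound is met. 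The connected semisimple groups $G$ with $\dim G \leqslant 13$ are limited: they are products/isogeny quotients built from $\SL_2$ (dim $3$), $\SL_3$ and $\Sp_4 \cong \SO_5$ (dim $8$, type $C_2$), and $G_2$ (dim $14$, which is excluded). So $G$ is isogenous to a product of copies of $\SL_2$ and at most one factor of type $A_2$ or $C_2$, and I would enumerate these finitely many possibilities.

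The main obstacle will be the borderline cases where $H$ is not reductive or where the $G$-action on $G/H$ has positive-dimensional kernel, since then neither theorem~\ref{ThB} nor theorem~\ref{ThBrank} applies directly. When the action has positive-dimensional kernel, I would pass to the image of $G$ in $\Aut(G/H)$ as in the proof of theorem~\ref{ThB0}, replacing $G$ by a genuinely smaller semisimple group acting (almost) faithfully, which only decreases $\dim G$ and makes the bounds easier to satisfy. When $\dim H \leqslant 2$ and $H$ is non-reductive, I would note that $H$ is then solvable (a connected group of dimension $\leqslant 2$ is automatically solvable), hence contained in a Borel subgroup of $G$, so theorem~\ref{Th0} gives the rationality of $G/H$ immediately. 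Assembling these observations, every case with $\dim G \leqslant 13$ either falls under $\dim(G/H) \leqslant 10$ (theorem~\ref{ThB}), or has $H$ solvable (theorem~\ref{Th0}), or has $H$ reductive with the bound of theorem~\ref{ThBrank} verified on the short list of semisimple groups, so $G/H$ is rational in all cases.
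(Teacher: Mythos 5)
Your proposal is correct and, at its core, is the paper's own proof: theorem~\ref{ThB} handles $\dim(G/H)\leqslant 10$, and in the remaining cases $\dim H\leqslant 2$ forces $H$ to be connected solvable, so theorem~\ref{Th0} applies. The lengthy detour through lemma~\ref{mod radical}, the enumeration of semisimple groups of dimension $\leqslant 13$, and theorem~\ref{ThBrank} is unnecessary (and the \ref{ThBrank} bound would not even hold in all borderline cases, e.g.\ $G$ a product of four copies of $\SL_2$ with $H$ trivial): your own parenthetical observation that \emph{every} connected group of dimension $\leqslant 2$ is solvable --- reductive or not, since tori are solvable --- already disposes of all cases with $11\leqslant\dim(G/H)\leqslant 13$ via theorem~\ref{Th0}.
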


\begin{proof}
If $\dim(G/H) \leqslant 10$,
the rationality of $G/H$
follows from theorem~\ref{ThB}.
The remaining cases are when
$11 \leqslant \dim(G/H) \leqslant 13$,
but this means
$\dim H \leqslant 2$
and thus $H$ is solvable,
whence
the rationality of $G/H$
follows from theorem~\ref{Th0}.
\end{proof}

\begin{proposition}
\label{dimG=14}
Let $G$ be a connected linear algebraic group
with $\dim G = 14$.
Then for any connected closed subgroup $H \subseteq G$,
$G/H$ is a rational variety
---
except possibly when
$G$ is the simple group of type~$G_2$
and
$H \subseteq G$ is semisimple of type~$A_1$
(in which case
$\dim(G/H) = 11$).
\end{proposition}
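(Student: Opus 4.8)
The plan is to argue by the value of $\dim H$, peeling off every configuration with the rationality results already at hand and isolating the single case responsible for the exception. Since $\dim(G/H) = 14 - \dim H$, if $\dim H \geqslant 4$ then $\dim(G/H) \leqslant 10$ and $G/H$ is rational by theorem~\ref{ThB}. If $\dim H \leqslant 2$, then the connected group $H$ is solvable (the smallest non-solvable connected group, of type $A_1$, already has dimension~$3$), so $H$ lies in a Borel subgroup of $G$ and $G/H$ is rational by theorem~\ref{Th0}. This leaves $\dim H = 3$: if $H$ is solvable it is again handled by theorem~\ref{Th0}, and otherwise $H$ is forced to be semisimple of type $A_1$, the only non-solvable connected group of dimension~$3$. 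Thus the whole proposition reduces to the case where $H$ is of type $A_1$ and $\dim(G/H) = 11$.

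The next step is to reduce to $G$ semisimple. Writing $R := R(G)$, I would invoke lemma~\ref{mod radical}. As $H$ is semisimple, $H \cap R$ is a solvable normal subgroup of $H$, hence finite, so the image $H'$ of $H$ in $G' := G/R$ is again connected of type $A_1$, and $G/H$ is rational as soon as $G'/H'$ is. If $R$ is non-trivial, then $\dim G' = 14 - \dim R \leqslant 13$, so $\dim(G'/H') = \dim G' - 3 \leqslant 10$ and $G'/H'$ is rational by theorem~\ref{ThB}. Hence I may assume $G$ is semisimple of dimension exactly $14$. Bookkeeping the dimensions of the simple types in this range---$3, 8, 10, 14$ for $A_1, A_2, B_2, G_2$ respectively (with $B_2 \cong C_2$)---shows that the only semisimple types summing to $14$ are $G_2$ and $A_2 \times A_1 \times A_1$.

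For the type $A_2 \times A_1 \times A_1$ case I would split according to whether $H$ is normal in $G$. If $H \lhd G$, then being a connected simple normal subgroup it coincides with one of the $A_1$ factors, so the coset space $G/H$ is a connected linear algebraic group and is therefore rational. If $H$ is not normal, then the largest normal subgroup of $G$ contained in $H$ is finite---its identity component would otherwise be a non-trivial connected normal subgroup of the simple group $H$, forcing $H \lhd G$---so the natural left action of $G$ on $G/H$ has zero-dimensional kernel. Theorem~\ref{ThBrank} then applies with $t_G = 4$, $t_H = 1$, and $\dim(G/H) = 11 < 13 = t_G + t_H + 8$, yielding the rationality of $G/H$. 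A pleasant feature of this step is that theorem~\ref{ThBrank} is insensitive to the particular embedding of $H$, so no classification of $A_1$-subgroups of $A_2 \times A_1 \times A_1$ is required.

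The genuine obstacle is exactly the excluded case $G = G_2$ with $H$ of type $A_1$. There one has $t_G = 2$ and $t_H = 1$, so the bound of theorem~\ref{ThBrank} becomes $\dim(G/H) < t_G + t_H + 8 = 11$; but $\dim(G/H) = 11$, so the criterion fails by the narrowest possible margin. This accounts for the need to carve out the $G_2$ exception, and signals that covering it would require an input genuinely stronger than the rank estimate available in the excerpt.
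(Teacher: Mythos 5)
Your proof is correct and follows the paper's skeleton almost exactly: the trichotomy on $\dim H$, the reduction to $G$ semisimple via lemma~\ref{mod radical} and theorem~\ref{ThB}, and the classification of $14$-dimensional semisimple groups into type~$G_2$ and type~$A_2+2A_1$. The one place you diverge is the $A_2+2A_1$ case. The paper dispatches it in one line with lemma~\ref{uGH4}: since $H$ is reductive and $u_G - u_H = (3+1+1) - 1 = 4$, that lemma applies directly, with no hypothesis on the kernel of the action. You instead route through theorem~\ref{ThBrank}, which forces you to first verify the zero-dimensional-kernel hypothesis; your verification (the identity component of the kernel is a connected normal subgroup of the almost-simple group $H$, hence trivial or all of $H$, and the latter case is the normal case where $G/H$ is itself a group) is correct, but it is extra work that the paper's choice of lemma makes unnecessary. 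Both arguments are sound; yours has the small advantage of needing only the rank data $t_G = 4$, $t_H = 1$ rather than a count of positive roots, and your closing observation that the $G_2$ exception is precisely where the bound $\dim(G/H) < t_G + t_H + 8$ fails by one matches the paper's own analysis in prop.~\ref{dimG/H=11}.
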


\begin{proof}
As in the proof of the previous result,
when $\dim H \geqslant 4$
or when $\dim H \leqslant 2$,
the rationality of $G/H$
follows from theorem~\ref{ThB} or theorem~\ref{Th0}
respectively.
Henceforth
we assume that
$\dim H = 3$ and that $H$ is not solvable.
This means that
$H$ is semisimple of type~$A_1$;
in particular,
$\dim(G/H) = 11$.

Let $R = R(G)$ be the solvable radical of $G$.
If $\dim(G/HR) < \dim(G/H) = 11$,
then $G/HR$ is rational by theorem~\ref{ThB},
and so
$G/H$ is rational by lemma~\ref{mod radical}.
Hence we may assume that
$\dim(G/HR) = \dim(G/H) = 11$,
which means
$H = HR$ is a connected closed subgroup of $G$
containing $R$ in its radical;
since $H$ is semisimple,
this forces $R$ to be trivial,
and hence $G$ is also semisimple.
By the classification of semisimple groups,
$\dim G = 14$ implies that
$G$ is either of type~$A_2+2A_1$
or of type~$G_2$.
In the former case,
we have
$u_G - u_H
 = (3 + 2) - 1 = 4
$
and the rationality of $G/H$
follows from lemma~\ref{uGH4}.
In the latter case,
we are in the possible exceptional situation
of the proposition.
\end{proof}

When the homogeneous variety $G/H$
is of dimension~$\leqslant 10$,
the rationality problem~\ref{RatPbm}
has been answered affirmatively
in theorem~\ref{ThB}.
We consider the cases when $G/H$
is of dimension~11 and~12
in prop.~\ref{dimG/H=11} and~\ref{dimG/H=12}
below.
To put the homogeneous variety $G/H$
in a somewhat ``reduced form'',
we impose the hypothesis that
$G$ acts on $G/H$
with a zero-dimensional kernel;
by lemma~\ref{tSS},
this can always be achieved
without changing the birational type of $G/H$
by replacing $G$ and $H$
by their images in $\Aut(k(G/H))$.

\begin{proposition}
\label{dimG/H=11}
Let $G$ be a connected semisimple group,
and let $H \subseteq G$ be
a connected closed subgroup.
Suppose
the natural left action of $G$ on $G/H$
has zero-dimensional kernel.
If $\dim (G/H) = 11$,
then $G/H$ is a rational variety
---
except possibly when
$G$ is semisimple of type~$G_2$
and
$H$ is semisimple of type~$A_1$.
\end{proposition}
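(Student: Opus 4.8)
The plan is to split the argument according to whether the connected subgroup $H$ is reductive, treating the non-reductive case by a parabolic reduction and the reductive case by the dimension count of~\ref{notA}. The hypothesis that the left action of $G$ on $G/H$ has zero-dimensional kernel will only be needed in the final boundary case.

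First I would dispose of the case when $H$ is \emph{not} reductive. By the theorem of Borel--Tits (lemma~\ref{Borel-Tits}), a non-reductive connected $H$ must be contained in some proper parabolic subgroup $P \subsetneq G$. Corollary~\ref{subparabolic} then gives a birational equivalence $G/H \sim (G/P) \times (P/H)$ with $G/P$ rational (lemma~\ref{parabolic section}). Since $P$ is proper, $\dim(G/P) \geqslant 1$, so $\dim(P/H) = 11 - \dim(G/P) \leqslant 10$; as $P$ is a connected linear algebraic group and $H \subseteq P$ is connected, theorem~\ref{ThB} shows $P/H$ is rational, whence $G/H$ is rational. Note this case produces no exception at all, and is the cleanest part of the proof.

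It remains to treat reductive $H$, so $u(H)=0$ and the dimension formula~\eqref{dimformula} in~\ref{notA} reads $(t_G - t_H) + 2(u_G - u_H) = 11$. Writing $a := t_G - t_H$ and $b := u_G - u_H$, we have $a + 2b = 11$ with $a$ necessarily odd, so $a \geqslant 1$. Lemma~\ref{uGH4}, applicable since $H$ is reductive, settles every case with $b \leqslant 4$; the only survivor is $b = 5$, $a = 1$, i.e.\ $t_G = t_H + 1$ and $u_G = u_H + 5$.

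For this final case I would invoke theorem~\ref{ThBrank}: its hypotheses ($G$ semisimple, $H$ reductive, zero-dimensional kernel) hold, and its conclusion applies once $\dim(G/H) < t_G + t_H + 8 = 2t_H + 9$, that is, once $t_H \geqslant 2$. The main remaining obstacle is therefore the boundary $t_H \leqslant 1$, which I would resolve by classification. If $t_H = 0$ then $H$ is trivial and $G$ would be semisimple of rank $1$ and dimension $11$, which is impossible. If $t_H = 1$ then $G$ has rank $2$, while $H$ is either a one-dimensional torus (forcing $\dim G = 12$) or of type $A_1$ (forcing $\dim G = 14$); comparing with the dimensions $6, 8, 10, 14$ of the rank-$2$ semisimple groups $A_1 \times A_1$, $A_2$, $B_2$, $G_2$, only $\dim G = 14$ occurs, which pins down $G$ of type $G_2$ and $H$ of type $A_1$ --- precisely the stated exception. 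I expect the delicate point to be this bookkeeping, namely verifying that $t_H \leqslant 1$ genuinely isolates the single configuration $G_2/A_1$ and excludes every other possibility, rather than any single hard geometric input.
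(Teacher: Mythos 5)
Your proposal is correct and follows essentially the same route as the paper: parabolic reduction via Borel--Tits and theorem~\ref{ThB} for one case, then the dimension formula of~\ref{notA} plus lemma~\ref{uGH4} to isolate $(u_G-u_H,\,t_G-t_H)=(5,1)$, with the zero-dimensional-kernel hypothesis entering through the lemma~\ref{Hbu}/lemma~\ref{equiv} mechanism to pin down $t_H=1$ and hence $G$ of type~$G_2$. The only cosmetic differences are that you split cases by reductivity of $H$ rather than by parabolic containment, and you invoke theorem~\ref{ThBrank} (whose proof is exactly the double-coset argument the paper inlines) instead of re-running that argument by hand --- which also obliges you to rule out the extra possibility $H\cong\Gm$ at rank $t_H=1$, and you do so correctly.
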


\begin{proof}
We adopt the notation of~\ref{notA}.
If $H$ is contained in
some proper parabolic subgroup $P$ of $G$,
then by cor.~\ref{subparabolic},
$G/H$ is birational to $(G/P) \times (P/H)$,
and by lemma~\ref{parabolic section},
$G/P$ is rational.
Since $\dim(P/H) \leqslant \dim(G/H)-1 = 10$,
theorem~\ref{ThB} shows that
$P/H$ is rational,
and the rationality of $G/H$ follows.
Henceforth
we assume
$H$ is not contained in
any proper parabolic subgroup $P$ of $G$;
thus by lemma~\ref{Borel-Tits},
$H$ is semisimple.

Proceeding as in the proof of theorem~\ref{ThB},
we see that
$G/H$ is rational
if $(t_G - t_H) + (u_G - u_H) \leqslant 5$
or $u_G - u_H \leqslant 3$,
or even $u_G - u_H = 4$
(by lemma~\ref{uGH4}).
In the remaining cases,
our hypothesis $\dim(G/H) = 11$
together with the dimension formula~\eqref{dimformula} in~\ref{notA}
imply that
\[
  (\,
  u_G-u_H
  \,,\,
  t_G-t_H
  \,)
  \quad
  \text{is equal to}
  \quad
  (5,1)
  .
\]
By lemma~\ref{uGH3},
we have
$\dim(B^- \backslash G / U_H) = u_G - u_H$
which is~$= 5$ here,
so we may argue
as in the proof of lemma \ref{uGH4}
to see that
$G/H$ is rational
except possibly when
\[
  \dim (B^- \backslash G / U_H) = 5
  ,
  \quad
  \dim (B^- \backslash G / B_H) = 4
  ,
  \quad
  \dim (B^- \backslash G / H) = 3
  .
\]
In this case,
our hypothesis on the action of $G$ on $G/H$
together with lemma~\ref{Hbu}
gives $e = t_H$
in the notation there,
which by lemma~\ref{equiv}
means that
$\dim(B^- \backslash G / B_H)
 =
 u_G - u_H - t_H
$.
Thus
$H$ is a semisimple group
of rank~$t_H = 1$
and hence of type~$A_1$,
and
$G$ is a semisimple group
of rank~$t_G = t_H + 1 = 2$,
with
$\dim G = \dim(G/H) + \dim H = 14$.
By the classification of semisimple groups,
this implies
$G$ is of type~$G_2$,
and we are in the possible exceptional situation
of the proposition.
\end{proof}

\begin{proposition}
\label{dimG/H=12}
Let $G$ be a connected semisimple group,
and let $H \subseteq G$ be
a connected reductive closed subgroup.
Suppose
the natural left action of $G$ on $G/H$
has zero-dimensional kernel.
If $\dim (G/H) = 12$,
then $G/H$ is a rational variety
---
except possibly when
$G$ is semisimple of type~$A_3$
and
$H$ is semisimple of type~$A_1$.
\end{proposition}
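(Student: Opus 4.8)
The plan is to follow the proof of prop.~\ref{dimG/H=11}, adopting the notation of~\ref{notA} (with $G$ semisimple, as given). Since $H$ is reductive we have $u(H)=0$, so the dimension formula~\eqref{dimformula} reads $\dim(G/H)=(t_G-t_H)+2(u_G-u_H)=12$. First I would dispose of the case where $H$ is contained in a proper parabolic subgroup $P\subsetneq G$. By cor.~\ref{subparabolic} and lemma~\ref{parabolic section}, $G/H$ is birational to $(G/P)\times(P/H)$ with $G/P$ rational, and $\dim(P/H)=12-\dim(G/P)\leqslant 11$. When $\dim(G/P)\geqslant 2$ we get $\dim(P/H)\leqslant 10$, so theorem~\ref{ThB} finishes.

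The delicate sub-case is $\dim(G/P)=1$, which forces $G$ to be isogenous to $\SL_2\times G'$ with $P=B_1\times G'$, where $B_1$ is a Borel subgroup of the $\SL_2$-factor; here $\dim(P/H)=11$. I would reduce $P$ modulo its radical $R(P)=B_1\times\{1\}$ via lemma~\ref{mod radical}, so that $P/H$ is birational to $(G'/H')\times\bP^s$, where $H'$ is the image of $H$ in $P/R(P)\cong G'$. The kernel of $H\to H'$ equals $H\cap R(P)$, a normal---hence reductive---subgroup of $H$ that lies in the solvable group $B_1$, and is therefore a torus of dimension $\leqslant 1$. Consequently $\dim H'\geqslant\dim H-1$, so $\dim(G'/H')\leqslant(\dim G'-\dim H)+1=(\dim(G/H)-3)+1=10$, and theorem~\ref{ThB} again gives the rationality of $G'/H'$ and hence of $G/H$.

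In the remaining, non-parabolic case, lemma~\ref{Borel-Tits} shows that $H$ is semisimple. Lemma~\ref{uGH4} disposes of $u_G-u_H\leqslant 4$ (and, since $(t_G-t_H)+(u_G-u_H)=12-(u_G-u_H)\geqslant 6$ throughout, lemma~\ref{tu6} contributes nothing new), so the equation $(t_G-t_H)+2(u_G-u_H)=12$ together with $t_G-t_H\geqslant 0$ leaves exactly $(u_G-u_H,\,t_G-t_H)\in\{(5,2),(6,0)\}$. In both cases lemma~\ref{uGH3}(a) gives $\dim(B^-\backslash G/U_H)=u_G-u_H$, while lemma~\ref{Hbu}(b), using the zero-dimensional kernel hypothesis, gives $e=t_H$ and hence $\dim(B^-\backslash G/B_H)=(u_G-u_H)-t_H$. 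These values feed the chain $\dim(B^-\backslash G/H)\leqslant\dim(B^-\backslash G/B_H)\leqslant\dim(B^-\backslash G/U_H)$ coming from diagram~\eqref{dblcosetsdiagram}.

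Finally I would carry out the small-rank bookkeeping. Whenever $\dim(B^-\backslash G/H)\leqslant 2$ lemma~\ref{dim1or2} applies, and otherwise I use the value $\dim(B^-\backslash G/B_H)=(u_G-u_H)-t_H$, invoking lemma~\ref{orbit} whenever this equals $\dim(B^-\backslash G/H)$, i.e.\ whenever the chain collapses. In case $(6,0)$ the value $t_H=2$ is excluded by a root count (no rank-$2$ semisimple group has a maximal-rank semisimple subgroup with $u_G-u_H=6$), and every remaining possibility collapses the chain, so $G/H$ is rational. In case $(5,2)$ the same analysis leaves only $t_H=1$ unresolved, which forces $H$ of type $A_1$, $t_G=3$ and $\dim G=\dim(G/H)+\dim H=15$; by the classification of semisimple groups the unique rank-$3$ group of dimension $15$ is of type $A_3$, which is precisely the exceptional case. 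I expect the principal obstacle to be exactly this last step---checking that for every admissible $(t_G,t_H)$ other than $(3,1)$ the double-coset chain collapses (so that lemma~\ref{orbit} or lemma~\ref{dim1or2} applies), and that the single residual case is genuinely of type $(A_3,A_1)$---together with the control of $\dim(G'/H')$ in the $\dim(G/P)=1$ sub-case.
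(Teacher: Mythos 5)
Your proof is correct, and its core --- the dimension formula with $u(H)=0$, the reduction via lemmas~\ref{tu6} and~\ref{uGH4} to the two cases $(u_G-u_H,\,t_G-t_H)\in\{(6,0),(5,2)\}$, the use of the zero-dimensional-kernel hypothesis through lemma~\ref{Hbu}(b) to get $\dim(B^-\backslash G/B_H)=u_G-u_H-t_H$, and the final classification forcing the $(A_3,A_1)$ exception --- is exactly the paper's argument; the paper simply packages your double-coset bookkeeping as a single appeal to theorem~\ref{ThBrank}, which encodes the same computation. The one genuine structural difference is how non-semisimple reductive subgroups $H$ are eliminated. You split off the parabolic case at the outset via cor.~\ref{subparabolic} and lemma~\ref{Borel-Tits}, which obliges you to treat the sub-case $\dim(G/P)=1$ separately; your argument there ($G$ isogenous to $\SL_2\times G'$, $R(P)=B_1$, $H\cap B_1$ a torus of dimension $\leqslant 1$, hence $\dim(G'/H')\leqslant 10$ and theorem~\ref{ThB} applies) is valid. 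The paper skips the parabolic reduction entirely: after theorem~\ref{ThBrank} one may assume $t_G+t_H\leqslant 4$, and then in case $(6,0)$ the classification gives $\dim G\leqslant 14$, hence $\dim H\leqslant 2$ and $H$ is solvable (theorem~\ref{Th0}), while in case $(5,2)$ a connected reductive $H$ of rank~$1$ is either a torus (theorem~\ref{Th0} again) or semisimple of type~$A_1$. The paper's route is shorter and avoids the $\dim(G/P)=1$ digression; yours buys an immediate reduction to semisimple $H$, which makes the final root-count exclusions slightly cleaner.
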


\begin{proof}
Again
we adopt the notation of~\ref{notA},
but note that
$u(H) = 0$
by hypothesis here.
As in the proof of the previous result,
we see that
$G/H$ is rational
if $(t_G - t_H) + (u_G - u_H) \leqslant 5$
or $u_G - u_H \leqslant 4$.
In the remaining cases,
our hypothesis $\dim(G/H) = 12$
together with the dimension formula~\eqref{dimformula} in~\ref{notA}
imply that
\[
  (\,
  u_G-u_H
  \,,\,
  t_G-t_H
  \,)
  \quad
  \text{is equal to}
  \quad
  (6,0)
  \ \text{ or }\
  (5,2)
  .
\]
Our hypothesis on the action of $G$ on $G/H$
together with theorem~\ref{ThBrank}
shows that
$G/H$ is rational
if $12 = \dim (G/H) < t_G + t_H + 8$;
henceforth
we assume that
$t_G + t_H \leqslant 4$.

In the first case above,
we have
$t_G = t_H \leqslant 2$.
By the classification of semisimple groups,
this implies
$\dim G \leqslant 14$,
and hence
$\dim H = \dim G - \dim(G/H) \leqslant 2$.
This means that
$H$ is solvable,
and so
$G/H$ is rational by theorem~\ref{Th0}.

In the second case above,
we must have
$t_H = 1$
and
$t_G = t_H + 2 = 3$.
Again,
if $H$ is solvable,
then
$G/H$ is rational by theorem~\ref{Th0};
henceforth
we assume that
$H$ is not solvable.
This means that
$H$ is semisimple
of rank~$t_H = 1$
and hence of type~$A_1$,
and
$G$ is a semisimple group
of rank~$t_G = 3$,
with
$\dim G = \dim(G/H) + \dim H = 15$.
By the classification of semisimple groups,
this implies
$G$ is of type~$A_3$,
and we are in the possible exceptional situation
of the proposition.
\end{proof}

In view of the possible exceptional situations
in prop.~\ref{dimG=14} and~\ref{dimG/H=11},
our rationality results do not entirely cover
the case when
$G$ is the simple group of type~$G_2$,
and we are thus lead
to pose the following:

\begin{question}
Let $G$ be the 14-dimensional
connected semisimple group
of type~$G_2$.
Let $H \subseteq G$ be
a connected semisimple closed subgroup
of type~$A_1$
which is not contained in
a proper parabolic subgroup of $G$.
Is the $11$-dimensional homogeneous variety $G/H$
a rational variety?
\end{question}

The question can be made
a bit more precise.
By the Jacobson-Morozov theorem,
the semisimple closed subgroups
of type~$A_1$
in a semisimple group $G$
are in bijection with
the non-trivial unipotent elements in $G$.
For $G \cong G_2$,
there are four such unipotent conjugacy classes,
of which
the regular and the subregular unipotent classes
are the ones
corresponding to
($G$-conjugacy classes of)
subgroups $H$ of type~$A_1$
which are not contained in
any proper parabolic subgroup of $G$.
If $H$ corresponds to
the subregular unipotent class,
one knows that
$H \cong \PGL_2 \cong \SO_3$
is contained in
a maximal connected semisimple subgroup $M$
of type~$A_2$ in $G \cong G_2$,
and that $M \cong \SL_3$,
which is a special group
(in the sense of Serre);
by lemma \ref{spQ},
this implies that
$G/H$ is birational to $(G/M) \times (M/H)$,
and since both $G/M$ and $M/H$
are of dimension~$\leqslant 10$
and hence rational
by theorem~\ref{ThB},
it follows that
$G/H$ is also rational
in this case.
Thus
the only outstanding case of the question,
yielding the possible exceptional situation
in prop.~\ref{dimG=14} and~\ref{dimG/H=11},
is when $H$ corresponds to
the regular unipotent class of
$G \cong G_2$,
i.e.~when $H \cong \PGL_2$ arises as
the image of $\SL_2$
under its irreducible 7-dimensional representation
to $G_2 \subseteq \SO_7$.
\footnote{
Likewise,
a similar analysis
shows that
the only possible exceptional situation
in prop.~\ref{dimG/H=12}
is when the semisimple group $H$
of type~$A_1$
corresponds via the Jacobson-Morozov theorem to
the regular unipotent class of
$G$ of type~$A_3$,
i.e.~$H$ is
the isomorphic image of $\SL_2$
under its irreducible 4-dimensional representation
to $\SL_4$
---
in the ``adjoint'' case,
the homogeneous space
$\PGL_4/\PGL_2$
is known (cf.~\cite{PS})
to be rational.
}

\bigskip

We conclude by the following result
indicating the nature of
a potential ``minimal counter-example''
for the rationality of homogeneous variety $G/H$.

\begin{proposition}
Let $G$ be a connected linear algebraic group,
and let $H \subseteq G$ be
a connected closed subgroup such that
the natural left action of $G$ on $G/H$ is faithful.
Suppose that the homogeneous variety $G/H$
is not rational,
and that $\dim(G/H)$
is minimal among these non-rational varieties.
Then:
\begin{itemize}
\item[(a)]
$G$ is semisimple;
$H$ is semisimple,
and is not contained in
any proper parabolic subgroup of $G$.
\item[(b)]
$H$ is of finite index in
its normalizer $N_G(H)$ in $G$;
consequently,
the Tits fibration
$G/H \to G/N_G(H)$
is a finite morphism.
\item[(c)]
Let $X$ be
a smooth projective $G$-equivariant compactification of $G/H$
with $D := X \smallsetminus (G/H)$
a simple normal crossing divisor;
then the rational map $\Phi_{|{-}(K_X+D)|}$
given by the complete linear system $|{-}(K_X+D)|$
is a generically finite map.
In particular,
$-(K_X+D)$ is a big divisor.
\end{itemize}
\end{proposition}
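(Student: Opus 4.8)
The plan is to use minimality to pin down the structure of $(G,H)$, and then convert the resulting rigidity into bigness of $-(K_X+D)$. For part~(a) I would first show $G$ is semisimple: if $R := R(G) \neq \{1\}$, then either $R \subseteq H$, which (as $R \lhd G$) places $R$ in the kernel of the action and contradicts faithfulness, or $R \not\subseteq H$, in which case the integer $s = \dim(HR)-\dim H$ appearing in lemma~\ref{mod radical} is positive, so $\dim(G'/H') < \dim(G/H)$ for $G' = G/R$; passing to a faithful model (lemma~\ref{tSS}, which changes neither dimension nor birational type) and invoking minimality forces $G'/H'$, hence $G/H$, to be rational --- a contradiction. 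Thus $G$ is semisimple. If moreover $H$ lay in a proper parabolic $P$, cor.~\ref{subparabolic} would give $G/H \sim (G/P)\times(P/H)$ with $G/P$ rational and $\dim(P/H)<\dim(G/H)$, and the same minimality argument applied to $P/H$ would again force $G/H$ to be rational. So $H$ lies in no proper parabolic, and lemma~\ref{Borel-Tits} then makes $H$ semisimple.

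For part~(b), note that since $H$ is semisimple, conjugation carries $N_G(H)^\circ$ into $\Aut(H)^\circ = \operatorname{Inn}(H)$, so $N_G(H)^\circ = H\cdot C_G(H)^\circ$ and $[N_G(H):H]<\infty$ is equivalent to the finiteness of $C_G(H)$. Suppose instead $C_G(H)^\circ$ is positive-dimensional. Being the centralizer of the reductive subgroup $H$ in the reductive group $G$ with $\characteristic(k)=0$, it is reductive, hence contains a positive-dimensional torus $A$ meeting $H$ in a finite subgroup of $Z(H)$. This is exactly where the naive approach stalls, and where the main obstacle lies: one would like to split off the whole fibre $C_G(H)^\circ$ birationally, but a torsor under a non-special reductive group need not admit a rational section. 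The remedy is to peel off only the torus $A$: it centralizes $H$, so right multiplication gives a generically free action of $\bar A := A/(A\cap H)$ on $G/H$ with quotient $G/HA$, and since $A$ is a torus, lemma~\ref{Rosenlict generic section} supplies a rational section and lemma~\ref{spQ} then yields $G/H \sim (G/HA)\times\bar A$ with $\bar A$ a rational torus. As $\dim(G/HA)<\dim(G/H)$, minimality (via a faithful model) makes $G/HA$ rational, hence $G/H$ rational --- a contradiction. Therefore $C_G(H)$ is finite, $[N_G(H):H]<\infty$, and the Tits fibration $G/H \to G/N_G(H)$ has finite fibres $N_G(H)/H$, so it is a finite morphism.

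For part~(c), since $X$ is $G$-equivariant, $G$ preserves the boundary $D$, and the infinitesimal action yields a sheaf homomorphism $\mathfrak g\otimes\mathcal O_X \to T_X(-\log D)$ defined over all of $X$. Taking top exterior powers and using $\det T_X(-\log D) = \mathcal O_X(-(K_X+D))$ (valid because $D$ is simple normal crossing), every $\xi_1\wedge\cdots\wedge\xi_n$ with $\xi_i\in\mathfrak g$ and $n=\dim(G/H)$ becomes a global section of $-(K_X+D)$. I would then identify the rational map attached to this sub-system: on the open orbit it sends $gH \mapsto [\operatorname{Ad}(g)\mathfrak h] = [\Lie(gHg^{-1})]$ into $\bP(\wedge^{\dim H}\mathfrak g)$ via Plücker coordinates, i.e.\ into the Grassmannian of $(\dim H)$-planes in $\mathfrak g$. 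Its image is the orbit $G\cdot[\mathfrak h]\cong G/N_G(\mathfrak h)$; since $\characteristic(k)=0$ gives $N_G(\mathfrak h)=N_G(H)$, part~(b) shows the fibres $N_G(H)/H$ are finite, so this sub-system map is generically finite. Because the sub-system map factors through $\Phi_{|{-}(K_X+D)|}$ by a linear projection, the latter is generically finite as well; equivalently its image has dimension $\dim X$, so $-(K_X+D)$ is big.
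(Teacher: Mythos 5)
Your proof is correct, and parts~(a) and~(c) follow essentially the same route as the paper: (a) is the combination of lemma~\ref{mod radical}, faithfulness, cor.~\ref{subparabolic}, lemma~\ref{parabolic section} and lemma~\ref{Borel-Tits} (your case split on $R \subseteq H$ versus $R \not\subseteq H$ is just a rearrangement of the paper's ``minimality forces $s=0$, then faithfulness kills $R$''), while for (c) you re-derive from scratch the fact, which the paper simply cites from Brion, that the sub-system of $|{-}(K_X+D)|$ spanned by the images of $\wedge^{\dim X}\Lie(G)$ realizes the Tits fibration $gH \mapsto [\operatorname{Ad}(g)\Lie(H)]$; combined with (b) and the linear-projection argument this gives generic finiteness and bigness exactly as in the paper. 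Part~(b) is where you genuinely diverge. The paper's argument is shorter and avoids the structure theory you invoke: if $N_G(H)/H$ is positive-dimensional, pick \emph{any} connected $1$-dimensional closed subgroup $\Mbar \subseteq N_G(H)/H$ with preimage $M$, and apply lemma~\ref{Sol} to the right action of $\Mbar$ on $G/H$ to get $G/H$ birational to $(G/M)\times\bP^d$ with $d\leqslant 1$; minimality then forces $G/M$, hence $G/H$, to be rational. The ``obstacle'' you flag --- that a torsor under a non-special reductive fibre need not split --- never arises, because one only needs to strip off one dimension at a time, and lemma~\ref{Sol} already handles both the $\Ga$ and $\Gm$ cases (the $\Ga$ case via \cite[Prop.~14.2.2]{Sp}, so there is no need to manufacture a torus). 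Your detour through $N_G(H)^\circ = H\cdot C_G(H)^\circ$, the reductivity of $C_G(H)^\circ$ in characteristic~$0$, and the extraction of a central torus $A$ is valid and yields the same conclusion, but it uses more machinery than necessary and, unlike the paper's argument, would not adapt verbatim if one only knew $N_G(H)/H$ to be positive-dimensional without identifying its group structure.
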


\begin{proof}
Let $R := R(G)$ be the radical of $G$.
By lemma~\ref{mod radical},
$G/H$ is birational to $G'/H' \times \bP^s$
for some $s \leqslant \dim R$,
where $G' = G/R$ is
the maximal semisimple quotient of $G$,
and $H' = H/(H \cap R)$ is
the image of $H$ in $G'$.
Since $G/H$ is non-rational,
so is $G'/H'$,
whence
the minimality of $\dim(G/H)$
implies that
$s = 0$,
i.e.~$G/H$ is birational to $G'/H'$.
Since $G$ acts faithfully on $G/H$ by assumption,
while $R$ acts trivially on $G'/H'$,
it follows that $R$ is trivial and $G$ is semisimple.

Suppose $H$ is contained in
a proper parabolic subgroup $P$ of $G$.
Since $\dim(P/H)$ is strictly smaller than $\dim(G/H)$,
the minimality of $\dim(G/H)$
forces $P/H$ to be rational.
But $G/P$ is rational
by lemma~\ref{parabolic section},
while by cor.~\ref{subparabolic},
$G/H$ is birational to $(G/P) \times (P/H)$
and hence is rational,
contradicting our assumption of
the non-rationality of $G/H$.
Hence $H$ is not contained in
any proper parabolic subgroup of $G$.
By lemma~\ref{Borel-Tits},
it follows that
$H$ is semisimple.
This proves part~(a).

Suppose the closed subvariety $N_G(H)/H$ of $G/H$
has positive dimension.
Choose a connected 1-dimensional closed subgroup
$\Mbar$ in $N_G(H)/H$,
and let $M \subseteq N_G(H)$ be
its preimage in $N_G(H)$.
By lemma~\ref{Sol}
applied to the natural right action
of $\Mbar$ on $G/H$,
we see that
$G/H$ is birational to
$G/M$ or to $(G/M) \times \bP^1$.
Since $\dim(G/M)$ is strictly smaller than $\dim(G/H)$,
the minimality of $\dim(G/H)$
forces $G/M$ to be rational;
but this implies that
$G/H$ is also rational,
contradicting our assumption.
Hence $N_G(H)/H$ is 0-dimensional,
which proves part~(b).

Let $(X, D)$ be a smooth projective
$G$-equivariant compactification of
the positive-dimensional homogeneous variety $G/H$,
and let $D = X \smallsetminus (G/H)$.
As observed in \cite[\S 2.1, Proof of Prop.~3.3.5 (iii)]{Br},
the Tits fibration
$G/H \to G/N_G(H)$
(as a rational map from $X \supseteq G/H$)
is given by $\Phi_{|V|}$,
where
\[
  V
  \ :=\
  \Image
  (\,
    \wedge^{\dim G} \, (\Lie G)
    \longrightarrow
    H^0(X, -(K_X + D))
  \,)
\]
is a non-zero vector space over $k$.
The generical finiteness of $\Phi_{|V|}$
then implies the same
for $\Phi_{|{-}(K_X+D)|}$,
which proves part~(c).
\end{proof}
%
%
%
%



%
%
%
%

\providecommand{\bysame}{\leavevmode\hbox to3em{\hrulefill}}

\end{document}